\newcommand{\tabincell}[2]{\begin{tabular}{@{}#1@{}}#2\end{tabular}}
\title{Gradient flow based discretized Kohn-Sham density functional theory
\thanks{
This work was partially supported by the National Natural Science
Foundation of China under grant 91730302 and 11671389 and the Key Research Program of Frontier Sciences of the Chinese Academy of Sciences under grant QYZDJ-SSW-SYS010.
}}
\author{Xiaoying Dai\footnotemark[2]
\and
Qiao Wang\footnotemark[2]
\and
Aihui Zhou\thanks{LSEC, Institute of Computational Mathematics and Scientific/Engineering Computing, Academy of Mathematics and Systems Science, Chinese Academy of Sciences, Beijing 100190, China, and School of Mathematical Sciences, University of Chinese Academy of Sciences, Beijing 100049, China (daixy@lsec.cc.ac.cn, qwang@lsec.cc.ac.cn, azhou@lsec.cc.ac.cn).}
}
\date{}
\newtheorem{prop}[theorem]{Proposition}
\newtheorem{rema}[theorem]{Remark}
\begin{document}

\maketitle

\begin{abstract}
    In this paper, we propose and analyze a gradient flow based Kohn-Sham density functional theory. First, we prove that the critical point of the gradient flow based model can be a local minimizer of the Kohn-Sham total energy. Then we apply a midpoint scheme to carry out the temporal discretization. It is shown that the critical point of the Kohn-Sham energy can be well-approximated by the scheme. In particular, based on the midpoint scheme, we design an orthogonality preserving iteration scheme to minimize the Kohn-Sham energy and show that the orthogonality preserving iteration scheme produces  approximations that are orthogonal and convergent to a local minimizer under reasonable assumptions. Finally, we report numerical experiments that support our theory.
\end{abstract}\vskip 0.2cm

{\bf Keywords.} density functional theory, dynamical system, eigenvalue problem, energy minimization, gradient flow, orthogonality preserving
\vskip 0.2cm

{\bf AMS subject classifications.}\quad {37M05, 37N40, 65N25, 70G60, 81Q05}

\section{Introduction}
    Kohn-Sham density functional theory (DFT) is the most wi\-dely used model in electronic structure calculations \cite{Kohn_Sham3}.
    We see that to solve the Kohn-Sham equation, which is a nonlinear eigenvalue problem, some self-consistent field (SCF) iterations are demanded \cite{dai2011finite, dai2008three,Ele_Str3}.
    However, the convergence of SCF iterations is not guaranteed, especially for large scale systems with small band gaps, for which the performance of the SCF iterations is unpredictable \cite{dai2017conjugate,SCF2}.  It has been shown by numerical experiments that the SCF iterations usually converge for systems with larger gap between the occupied orbitals and the remainder \cite{SCF1}. We understand that there are a number of works trying to illustrate this phenomenon and see that SCF iterations do converge if the gap is uniformly large enough locally or globally \cite{cai2017eigenvector, liu2014convergence, liu2015analysis, SCF1}.

In order to obtain approximations of  the Kohn-Sham DFT that are convergent,   in recent two decades, people pay much attention to study the direct energy minimization model. Instead of solving the Kohn-Sham equation, people minimize the Kohn-Sham total energy under an orthogonality constraint \cite{dai2017conjugate, Opt1, liu2014convergence, liu2015analysis, Opt3, Dyn2, Opt4, Opt6, Str_Pre2, SCF2, BaiZhengjian}. It is shown in \cite{Opt7} that a monotonic optimization approach may produce a locally convergent approximations. We observe that the iterations based on the optimization should be carefully carried out due to the orthogonality constraint, for which the existing methods are indeed either retraction (see, e.g., \cite{dai2017conjugate, SCF2}) or manifold path optimization approaches (see, e.g., \cite{dai2017conjugate, Str_Pre2, SCF2}). We see that some backtracking should be applied in a monotonic optimization method,  due to not only theory but also practice.

    In this paper, we introduce and analyze a gradient flow based discretized Kohn-Sham DFT for electronic structure calculations. First, we prove that our gradient flow based discretized Kohn-Sham DFT preserves orthogonality and models the ground state well. We then propose a midpoint scheme to carry out the temporal discretization, which is of orthogonality preserving, too. We mention that our numerical scheme avoids a retraction process and does not need any backtracking. Based on the midpoint scheme, finally, we design and analyze an orthogonality preserving iteration scheme for solving the discretized Kohn-Sham energy. It is shown by theory and numerics that our orthogonality preserving iteration scheme is convergent provided some reasonable assumption.

    For illustration, we provide Figure \ref{Table: Differences} to show the differences among the three approaches. In the midpoint scheme of the gradient flow based model (blue dashed line with square symbol endpoint), the auxiliary point of midpoint scheme of the dynamical system is inside the manifold. In the manifold path method (black solid line with circle symbol endpoint), the path is on the manifold and the energy is decreasing when the iteration is moving along the path. In the retraction method (red solid line with triangle symbol endpoint), the auxiliary point is in the tangent space and outside the manifold.

    \begin{table}
      \begin{tabular}{c|c|c|c}
                                & \tabincell{c}{Gradient Flow\\ Based Model}   & \tabincell{c}{Minimization\\Model} &   \tabincell{c}{Eigenvalue\\Model}\\
        \hline
        \tabincell{c}{Orthogona\\-lization\\requirement}   & No                     & \tabincell{c}{Yes(retraction method),\\no(manifold path method)}    &Yes\\
        \hline
        \tabincell{c}{Auxiliary\\points\\ location}& \tabincell{c}{Inside\\manifold\\(midpoint\\scheme)}    &\tabincell{c}{Outside manifold\\(retraction method),\\on manifold \\(manifold path method)}&\tabincell{c}{On\\manifold}\\
        \hline
        \tabincell{c}{Energy \\decreasing}&Yes  &   Yes &May not\\
        \hline
        \tabincell{c}{Convergence\\result\\assumptions}& \tabincell{c}{Local\\uniqueness\\of\\minimizer}&
        \tabincell{c}{Local\\uniqueness\\of\\minimizer}&\tabincell{c}{Eigenvalue gap\\ large enough\\(depending on\\discretization)}
      \end{tabular}
      \caption{Comparison of three models for Kohn-Sham DFT}\label{Table: Differences}
    \end{table}

    \begin{figure}
    \centering
      \includegraphics[width = 0.49\textwidth]{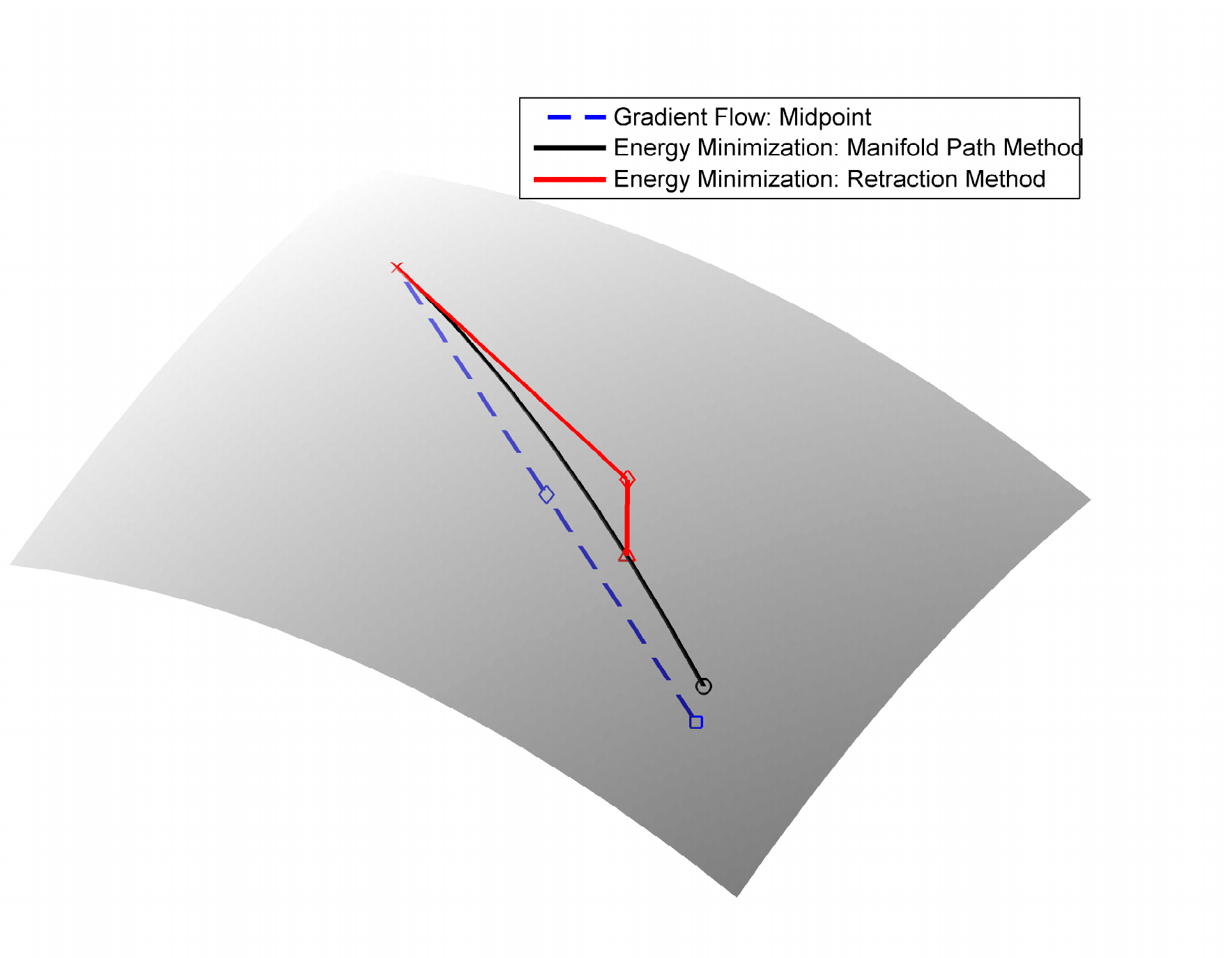}
      \includegraphics[width = 0.49\textwidth]{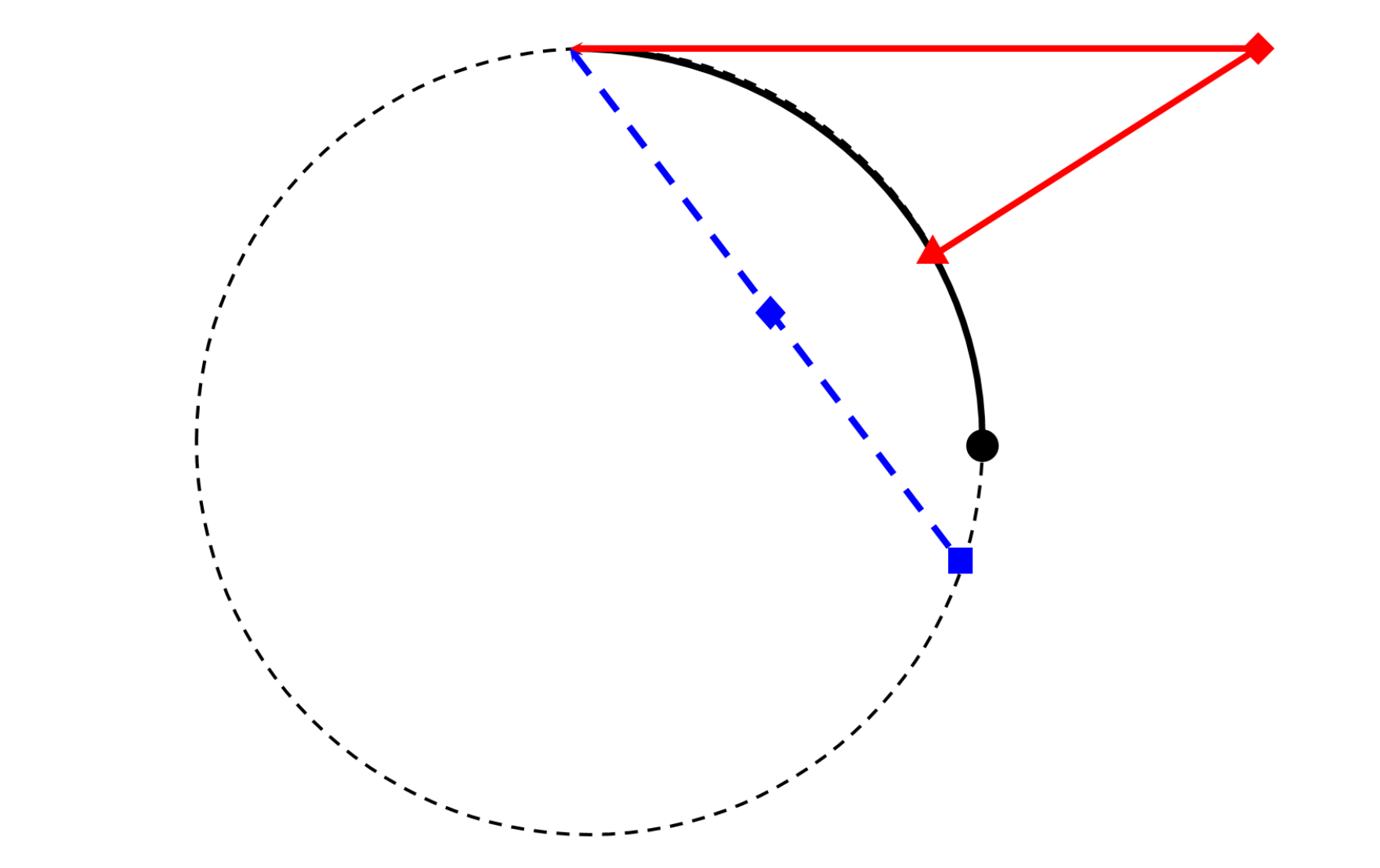}
      \caption{Comparison among gradient flow scheme, manifold path method and retraction method: blue dashed line with square symbol endpoint -- midpoint scheme of gradient flow model, black solid line with circle symbol endpoint -- manifold path method, red solid line with triangle symbol endpoint-- retraction method. Diamond symbol -- auxiliary point of each method.}\label{Fig: Cmp}
    \end{figure}

    We observe that there are some existing works on the gradient flow methods of eigenvalue problems. We refer to \cite{Oja1, Dyn1, Oja} and references cited therein for linear eigenvalue problems and \cite{Dyn3} for the ground state of Bose-Einstein condensate (which requires the smallest eigenvalue and its associated eigenfunction only). We point out that our gradient flow based model is different from the gradient flow model proposed in \cite{Opt7} for the Kohn-Sham DFT, in which the numerical scheme is either the retraction approach or the manifold path approach.

    We organize the rest of the paper as follows. In section 2, we introduce some necessary notation and the Kohn-Sham DFT models. Then we come up with our gradient flow based model and prove its local convergence and convergence rate of the asymptotic behaviours in section 3. In section 4, we propose a midpoint scheme to realize temporal discretization of the gradient flow based model and investigate the relevant properties including preserving orthogonality automatically, updating inside the manifold as well as the local convergence. Based on the midpoint scheme, in section 5, we design and analyze an orthogonality preserving iteration scheme for solving the discretized Kohn-Sham energy. In section 6, we provide numerical experiments that support our theory. Finally, we present some concluding remarks.

    \section{Preliminaries}
    In this section, we introduce some basic notation and the Kohn-Sham models.
    \subsection{Basic notation}
    We apply the standard $\textup{L}^2$-inner product $(\cdot, \cdot)_{\textup{L}^2(\mathbb{R}^3)}$, which is defined as
    \begin{equation}
      (u, v)_{\textup{L}^2(\mathbb{R}^3)} = \displaystyle\int_{\mathbb{R}^3}u(x)v(x)\textup{d}x,
    \end{equation}
   denote $\textup{L}^2$-norm $\Vert \cdot\Vert_{\textup{L}^2(\mathbb{R}^3)}$ by $\Vert u\Vert_{\textup{L}^2(\mathbb{R}^3)} = (u, u)_{\textup{L}^2(\mathbb{R}^3)}^{\frac12}$, and $\textup{L}^1$-norm $\Vert \cdot\Vert_{\textup{L}^1(\mathbb{R}^3)}$ by
    \begin{equation}
      \Vert u\Vert_{\textup{L}^1(\mathbb{R}^3)} =  \displaystyle\int_{\mathbb{R}^3}|u(x)|\textup{d}x.
    \end{equation}
    We define $\textup{H}^1$-norm $\Vert \cdot\Vert_{\textup{H}^1(\mathbb{R}^3)}$ as
    $$\Vert u\Vert^2_{\textup{H}^1(\mathbb{R}^3)} = \|u\|^2_{\textup{L}^2(\mathbb{R}^3)}+\|\nabla u\|^2_{\textup{L}^2(\mathbb{R}^3)}$$
    and use Sobolev space $\textup{H}^1(\mathbb{R}^3)$
    \begin{equation*}
      \textup{H}^1(\mathbb{R}^3) = \{u\in\textup{L}^2(\mathbb{R}^3):\Vert u \Vert_{\textup{H}^1(\mathbb{R}^3)} < +\infty\}.
    \end{equation*}

    Let ~$\Psi = (\psi_1, \psi_2, \ldots, \psi_N)\in\big(\textup{H}^1(\mathbb{R}^3)\big)^N$~ and ~$\Phi = (\varphi_1, \varphi_2, \ldots, \varphi_N)$ $\in \big(\textup{H}^1(\mathbb{R}^3)\big)^N$~. Define product matrix
    \begin{equation*}
      \Psi\odot\Phi = (\psi_i\varphi_j)_{i,j = 1}^N\in \big(\textup{L}^1(\mathbb{R}^3)\big)^{N\times N}
    \end{equation*}
    and inner product matrix
    \begin{equation*}
       \langle\Psi^\top\Phi\rangle = \big( (\psi_i, \varphi_j)_{\textup{L}^2(\mathbb{R}^3)} \big)_{i,j = 1}^N\in\mathbb{R}^{N\times N}.
    \end{equation*}
    For $\mathcal{F} = (\mathcal{F}_1, \mathcal{F}_2, \ldots, \mathcal{F}_N)\in \big( (\textup{H}^{1}(\mathbb{R}^3))^N \big)' = \big( \textup{H}^{-1}(\mathbb{R}^3) \big)^N$, we set
    \begin{equation}
      \langle \mathcal{F}, \Psi\rangle = \big( \langle \mathcal{F}_i, \psi_j\rangle \big)_{i,j = 1}^N\in\mathbb{R}^{N\times N}.
    \end{equation}
We then introduce the Stiefel manifold defined as
    \begin{equation*}
      \mathcal{M}^N = \big\{U \in \big(\textup{H}^1(\mathbb{R}^3) \big)^N:\langle U^\top U\rangle = I_N \big\}.
    \end{equation*}
    For $U \in \big(\textup{H}^1(\mathbb{R}^3) \big)^N$ and any matrix $P \in \mathbb{R}^{N\times N}$, we denote
    \begin{equation*}
      UP = \Big( \sum\limits_{j = 1}^N p_{j1}u_j, \sum\limits_{j = 1}^N p_{j2}u_j, \ldots, \sum\limits_{j = 1}^N p_{jN}u_j \Big).
    \end{equation*}
    We see that
    \begin{equation*}
        U\in\mathcal{M}^N \Leftrightarrow UP\in\mathcal{M}^N, \quad \forall P \in\mathcal{O}^{N},
    \end{equation*}
    where
    \begin{equation*}
      \mathcal{O}^{N}=\{P\in\mathbb{R}^{N\times N}:P^\top P = I_N\}.
    \end{equation*}

    We define an equivalent relation ``$\sim$'' on $\mathcal{M}^N$ as
    \begin{equation*}
      U \sim \hat U\Leftrightarrow \exists P\in \mathcal{O}^{N},~\hat U = UP,
    \end{equation*}
    and get a Grassmann manifold, which is a quotient of $\mathcal{M}^N$
    \begin{equation*}
      \mathcal{G}^N = \mathcal{M}^N\slash\sim.
    \end{equation*}
    We introduce an equivalent class of $U\in \mathcal{M}^N$ by
    \begin{equation*}
      [U] = \{UP: P\in\mathcal{O}^{N}\},
    \end{equation*}
    an inner product as
    \begin{equation*}
      (U, \hat U) = \textup{tr}\big(\langle U^\top  \hat U \rangle \big)
    \end{equation*}
    together with an associated norm
    \begin{equation*}
      ||| U ||| = (U, U)^{1\slash2}
    \end{equation*}
    on $\big(\textup{H}^1(\mathbb{R}^3) \big)^N$.

    Give a finite-dimensional space $V_{N_g}\subset \textup{H}^1(\mathbb{R}^3)$ spanned by $\phi_1, \phi_2, \ldots, \phi_{N_g}$. We denote $\Phi = (\phi_1, \phi_2, \ldots, \phi_{N_g})$. We see that for any $U\in (V_{N_g})^N$, there exists $C\in\mathbb{R}^{N_g\times N}$ such that
    \begin{equation}\label{Equ: spacialDiscrete}
      U = \Phi C = \Big(\sum\limits_{j = 1}^{N_g} c_{j1}\phi_j, \sum\limits_{j = 1}^{N_g} c_{j2}\phi_j, \ldots, \sum\limits_{j = 1}^{N_g} c_{jN}\phi_j\Big).
    \end{equation}
    We define a closed $\delta$-neighborhood of $U$ by
    \begin{equation*}
      B(U, \delta) = \{\hat U\in (V_{N_g})^N :\textup{dist}(U, \hat U)\leqslant \delta\}
    \end{equation*}
    where
    \begin{equation*}
      \textup{dist}(U, \hat U) = ||| U - \hat U |||,
    \end{equation*}
    and for $U\in(V_{N_g})^N\bigcap\mathcal{M}^N$ introduce a closed $\delta$-neighborhood of $[U]$ on $\mathcal{G}^N$ by
    \begin{equation*}
      B([U], \delta) = \{[\hat U]\in \mathcal{G}^N: \hat U \in (V_{N_g})^N\bigcap\mathcal{M}^N, \textup{dist}([U],[\hat U])\leqslant \delta\},
    \end{equation*}
    where
    \begin{equation*}
      \textup{dist}([U],[\hat U]) = \inf\limits_{P\in\mathcal{O}^{N}} ||| U - \hat U P |||.
    \end{equation*}

    For simplicity, we use notation
    \begin{equation}
      \{U, W\} = UW^\top - WU^\top, \:\forall U, W \in(V_{N_g})^N
    \end{equation}
    where $UW^\top$ and $WU^\top$ denote operators on $(V_{N_g})^N$:
      \begin{equation*}
      \begin{split}
        (UW^\top) V &= U \langle W^\top V\rangle,\\
        (WU^\top) V &= W \langle U^\top V\rangle,
      \end{split}
      \end{equation*}
    for any $V\in(V_{N_g})^N$.

    Obviously
    \begin{equation}
      \{U, W\} + \{W, U\} = 0, \:\forall U, W \in (V_{N_g})^N.
    \end{equation}
    Namely $\{U, W\}$ is skew-symmetric.

    \subsection{Kohn-Sham models}
    The energy based Kohn-Sham DFT model for a system of $N$ electron orbitals with external potential contributed by M nuclei of charges is the following constrained optimization problem on the Stiefel manifold
    \begin{equation}\label{Opt: Kohn-Sham Energy}
      \begin{array}{cc}
        \inf\limits_{U\in (\textup{H}^1(\mathbb{R}^3))^N} &E(U)\\
        \textup{s.t.} &~U\in\mathcal{M}^N
      \end{array}
    \end{equation}
    where $E(U)$ is the Kohn-Sham energy
    \begin{equation}\label{Equ: Kohn-Sham Energy}
      \begin{split}
        E(U) =& \frac12\sum\limits_{i = 1}^N f_i \int_{\mathbb{R}^3}|\nabla u_i(r)|^2 \textup{d}r + \int_{\mathbb{R}^3}V_{ext}(r)\rho_{_U}(r)\textup{d}r\\
            &+ \frac12\int_{\mathbb{R}^3} \int_{\mathbb{R}^3} \frac{\rho_{_U}(r)\rho_{_U}(r')}{|r - r'|}\textup{d}r \textup{d}r' + E_{xc}\big(\rho_{_U}\big).
      \end{split}
    \end{equation}
    Here $u_i\in\textup{H}^1(\mathbb{R}^3)(i = 1, 2, \ldots, N)$ are Kohn-Sham orbitals,
    \begin{equation}
      \rho_{_U}(r) = \sum\limits_{i = 1}^N f_i |u_i(r)|^2 = \textup{tr}(U\odot U F)
    \end{equation}
    is the associated electron density with $f_i$ being the occupation number of the $i$-th orbital and $F = \textup{diag}(f_1, f_2, \ldots, f_N)$. $V_{ext}(r)$ is the external potential generated by the nuclei: for full potential calculations,
    \[
    V_{ext}(r) = -\sum\limits_{I = 1}^{M}\frac{Z_I}{|r - R_I|},\]
    $Z_I$ and $R_I$ are the nuclei charge and position of the $I$-th nuclei respectively; while for pseudo potential
    approximations, the formula for the energy is still (\ref{Equ: Kohn-Sham Energy}) (see, e.g., \cite{dai2017conjugate}).
The fourth term in \eqref{Equ: Kohn-Sham Energy} is the exchange-correlation energy, to which some approximations, such as  LDA(Local Density Approximation), GGA(General Gradient Approximation) and so on\cite{Ele_Str2, LDA}, should be applied. We assume that $E(U)$ is bounded from below with orthogonality constraint of $U$, which is of physics. For simplicity, we consider the case of $F = 2I_N$.

    We see that for any $ U\in\mathcal{M}^N$ and all $P\in\mathcal{O}^N$, there hold
    \begin{equation*}
      \rho_{_{UP}} = \textup{tr}((UP)\odot UP F) = 2\textup{tr}(U\odot UP P^\top) = 2\textup{tr}(U\odot U) = \rho_{_U}
    \end{equation*}
    and
    \begin{equation}
    E(UP) = E(U).
    \end{equation}
Instead we consider an optimization problem on $\mathcal{G}^N$
    \begin{equation}\label{Equ: EnergyGrassmann}
      \begin{array}{cc}
        \inf\limits_{U\in (\textup{H}^1(\mathbb{R}^3))^N} &E(U)\\
        \textup{s.t.} &~[U]\in\mathcal{G}^N
      \end{array}
    \end{equation}
and define  level set
    \begin{equation*}
      \mathcal{L}_{E} = \{[U]\in \mathcal{G}^N : E(U)\leqslant E\}.
    \end{equation*}

    To introduce the gradient on $\mathcal{G}^N$, we suppose
    \begin{equation*}
      E_{xc}\big(\rho_{_U}\big) = \int_{\mathbb{R}^3} \varepsilon_{xc}\big(\rho_{_U}\big)(r)\rho_{_U}(r)\textup{d}r
    \end{equation*}
and assume that the ex\-change-corre\-lation energy is differentiable and the exchange-correlation potential
    \begin{equation*}
      v_{xc}(\rho) = \frac{\delta\big(\rho \varepsilon_{xc}(\rho) \big)}{\delta\rho}.
    \end{equation*}
    We may write the gradient of $E(U)$ as
    \begin{equation*}
      \nabla E(U) = (E_{u_1}, E_{u_2}, \ldots, E_{u_N}) \in \big(\textup{H}^{-1}(\mathbb{R}^3)\big)^N,
    \end{equation*}
    where $E_{u_i}\in\textup{H}^{-1}(\mathbb{R}^3)$ is defined by
    \begin{equation}\label{Equ: Derivative}
    \begin{split}
      \langle E_{u_i}, v\rangle =& 4\bigg( \frac12(\nabla u_i, \nabla v)_{\textup{L}_2} + (V_{ext}~u_i, v)_{\textup{L}_2}\\
       &+ \Big(\int_{\mathbb{R}^3}\frac{\rho_{_U}(r')}{|r - r'|}\textup{d}r~u_i, v\Big)_{\textup{L}_2} + \Big(v_{xc}\big(\rho_{_U}\big)~u_i, v\Big)_{\textup{L}_2} \bigg), \forall v\in\textup{H}^1(\mathbb{R}^3).
    \end{split}
    \end{equation}
    Obviously
    \begin{equation}\label{Equ: GradientSym}
      \big\langle\nabla E(U), U\big\rangle = \big\langle\nabla E(U), U\big\rangle^\top,\forall U\in\big(\textup{H}^1(\mathbb{R}^3)\big)^N.
    \end{equation}

    We see from \cite{Grassmann1} that the gradient on Grassmann manifold $\mathcal{G}^N$of $E(U)$ at $[U]$ is
    \begin{equation}\label{Equ: GrassmannGradientPrm}
          \nabla_{G}E(U) = \nabla E(U) - U\big\langle\nabla E(U), U\big\rangle^\top, ~\forall U\in \mathcal{M}^N.
    \end{equation}
    To propose a gradient flow based model preserving orthogonality, we need to extend the domain of $\nabla_{G}E(U)$ from $\mathcal{M}^N$ to $\big(\textup{H}^{1}(\mathbb{R}^3)\big)^N$. We then define extended gradient $\nabla_{G}E(U):\big(\textup{H}^{1}(\mathbb{R}^3)\big)^N \longrightarrow \big(\textup{H}^{-1}(\mathbb{R}^3)\big)^N$ as follows
    \begin{equation}\label{Equ:GrassmannGradient}
      \nabla_{G}E(U) = \nabla E(U)\langle U^\top U\rangle - U \big\langle\nabla E(U), U\big\rangle^\top, ~\forall U\in \big(\textup{H}^{1}(\mathbb{R}^3)\big)^N.
    \end{equation}
    Note that \eqref{Equ:GrassmannGradient} is consistent with \eqref{Equ: GrassmannGradientPrm} for $[U]\in\mathcal{G}^N$ since $\langle U^\top U\rangle = I_N$.

   We see from \cite{dai2017conjugate,Grassmann1} that the tangent space on $\mathcal{G}^N$ is
    \begin{equation}
      \mathcal{T}_{[U]}\mathcal{G}^N = \big\{ W\in \big(\textup{H}^1(\mathbb{R}^3)\big)^N:\langle W^\top U \rangle= 0 \big\}
    \end{equation}
    and the Hessian of $E(U)$ on $\mathcal{G}^N$  is
    \begin{equation}
      \textup{Hess}_{G}E(U)[V, W] = \textup{tr}\big(\langle V^\top \nabla^2E(U)W\rangle\big) - \textup{tr}\big(\langle V^\top W\rangle \langle U^\top \nabla E(U)\rangle\big),\:\forall V, W \in \mathcal{T}_{[U]}\mathcal{G}^N.
    \end{equation}


If $U\in (V_{N_g})^N$, then we may view $\nabla E(U)\in (V_{N_g})^N$ in the sense of isomorphism and
\begin{equation}
\big\langle\nabla E(U), V\big\rangle = \big\langle\big(\nabla E(U)\big)^\top V\big\rangle,~\forall V\in(V_{N_g})^N.
\end{equation}
As a result, $\nabla_{G}E(U)\in (V_{N_g})^N$ and we may write
    \begin{equation}\label{Equ: GrassmannGradientDis}
      \nabla_{G}E(U)
                     = \mathcal{A}_{U} U, ~\forall U\in(V_{N_g})^N,
    \end{equation}
    where
    \begin{equation*}
      \mathcal{A}_{U} = \{\nabla E(U), U\}.
    \end{equation*}

    \section{Gradient flow based model}\label{Sec: GrdFlw}
    In this section, we propose and analyze a gradient flow based model.
    \subsection{The model}
    Different from the Kohn-Sham equation and the Kohn-Sham energy minimization model, we propose a gradient flow based model of Kohn-Sham DFT as follows:
    \begin{equation}\label{Equ: GradientFlowOp}
    \left\{
    \begin{array}{l}
      \displaystyle\frac{\textup{d}U}{\textup{d}t} = -\nabla_G E(U), \quad 0< t <\infty\\
      U(0) = U_{0},
    \end{array}
    \right.
    \end{equation}
    where $U(t)\in (V_{N_g})^N$ and $U_{0}\in\mathcal{M}^N$. We see that \eqref{Equ: GradientFlowOp} is different from the standard gradient flow model presented in \cite{Opt7},  which applies the $\nabla_G E(U)$ in \eqref{Equ: GrassmannGradientPrm} rather than \eqref{Equ: GrassmannGradientDis}. We point out that whether the solution of \eqref{Equ: GrassmannGradientPrm} keeps on the Stiefel manifold is unclear. However, we see from Proposition \ref{Prop: GradientFlowOp} that our new $\nabla_G E(U)$ defined by \eqref{Equ: GrassmannGradientDis} guarantees that the solution keeps on the Stiefel manifold. Namely, \eqref{Equ: GradientFlowOp} is an orthogonality preserving model whenever the initial is orthogonal.

    \begin{lemma}\label{Lemma: ZeroTrace}
      If $A, B\in \mathbb{R}^{N\times N}$ and
      \begin{equation*}
        A^\top = A, \quad B^\top = -B,
      \end{equation*}
      then
      \begin{equation*}
        \textup{tr}(AB) = 0.
      \end{equation*}
    \end{lemma}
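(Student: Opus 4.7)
The plan is to exploit two elementary properties of the trace, namely invariance under transposition ($\operatorname{tr}(M) = \operatorname{tr}(M^\top)$) and the cyclic property ($\operatorname{tr}(XY) = \operatorname{tr}(YX)$), and combine them with the symmetry/skew-symmetry hypotheses to force $\operatorname{tr}(AB)$ to equal its own negative.

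More concretely, I would first write $\operatorname{tr}(AB) = \operatorname{tr}((AB)^\top) = \operatorname{tr}(B^\top A^\top)$, then substitute $A^\top = A$ and $B^\top = -B$ to get $\operatorname{tr}(B^\top A^\top) = -\operatorname{tr}(BA)$. Applying the cyclic property $\operatorname{tr}(BA) = \operatorname{tr}(AB)$ then yields $\operatorname{tr}(AB) = -\operatorname{tr}(AB)$, so $2\operatorname{tr}(AB) = 0$ and the conclusion follows since we work over $\mathbb{R}$.

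There is essentially no obstacle: the lemma is a purely algebraic identity about $N\times N$ real matrices, independent of any analytic content in the rest of the paper. If one preferred a direct index-level argument, an alternative is to expand $\operatorname{tr}(AB) = \sum_{i,j} A_{ij} B_{ji}$, swap the summation indices to obtain $\sum_{i,j} A_{ji} B_{ij} = \sum_{i,j} A_{ij}(-B_{ji}) = -\operatorname{tr}(AB)$, again using symmetry of $A$ and skew-symmetry of $B$; this gives the same conclusion without appealing to the cyclic/transpose identities explicitly. Either version is a couple of lines.
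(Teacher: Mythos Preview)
Your proposal is correct and matches the paper's proof essentially line for line: the paper computes $\textup{tr}(AB) = \textup{tr}(AB)^\top = \textup{tr}(B^\top A^\top) = -\textup{tr}(BA) = -\textup{tr}(AB)$ and concludes. There is nothing to add.
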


    \begin{proof}
      We see that
      \begin{equation*}
        \textup{tr}(AB) = \textup{tr}(AB)^\top = \textup{tr}(B^\top A^\top) = -\textup{tr}(BA) = -\textup{tr}(AB),
      \end{equation*}
      which indicates
      \begin{equation*}
        \textup{tr}(AB) = 0.
      \end{equation*}
    \end{proof}

    \begin{prop}\label{Prop: GradientFlowOp}
      The solution of \eqref{Equ: GradientFlowOp} satisfies ${U(t)}\in\mathcal{M}^N$. Moreover, there holds
        \begin{equation}
        \frac{\textup{d}E\big( U(t) \big)}{\textup{d}t} = -\Big|\Big|\Big| \nabla_{G} E\big(U(t)\big) \Big|\Big|\Big|^2\leqslant 0, \quad 0 < t < \infty.
        \end{equation}
    \end{prop}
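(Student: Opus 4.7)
The plan is to prove the two claims in sequence: first that the Gram matrix $S(t) := \langle U(t)^\top U(t)\rangle$ is conserved along the flow, and second that the chain rule combined with the orthogonal decomposition $\nabla E(U) = \nabla_G E(U) + U M$, with $M := \langle \nabla E(U), U\rangle$, splits the energy derivative into a square term plus a vanishing cross term. Both steps hinge on the same algebraic identity for $\langle U^\top \mathcal{A}_U U\rangle$, so the bulk of the work is to derive that identity cleanly.

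For orthogonality preservation, I would differentiate $S$ along $\dot U = -\mathcal{A}_U U$ to get $\dot S = -(A + A^\top)$ with $A := \langle U^\top \mathcal{A}_U U\rangle$. Unpacking the operator definition $\mathcal{A}_U = \nabla E(U)\, U^\top - U\,(\nabla E(U))^\top$ via the action rule $(VW^\top)X = V\langle W^\top X\rangle$ yields $\mathcal{A}_U U = \nabla E(U)\,S - U M$, and taking $\langle U^\top \,\cdot\,\rangle$ of both sides gives the clean matrix identity $A = M^\top S - S M$. The symmetry $M = M^\top$ from (25) together with $S = S^\top$ forces $A^\top = -A$, so $\dot S \equiv 0$. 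Combined with $S(0)=I_N$, this establishes $U(t) \in \mathcal{M}^N$ for every $t \geq 0$.

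For the energy dissipation, the chain rule gives $\frac{d}{dt}E(U(t)) = \textup{tr}(\langle \nabla E(U), \dot U\rangle) = -\textup{tr}(\langle \nabla E(U), \mathcal{A}_U U\rangle)$. Once $S \equiv I_N$ is in place, the previous identity collapses to $\nabla E(U) = \mathcal{A}_U U + UM$; substituting and using $\langle UM, V\rangle = M^\top \langle U^\top V\rangle$ splits the trace as $|||\mathcal{A}_U U|||^2 + \textup{tr}(M^\top \langle U^\top \mathcal{A}_U U\rangle)$. The cross term vanishes, either directly because $\langle U^\top \mathcal{A}_U U\rangle = M^\top S - SM = 0$ when $S = I_N$ and $M = M^\top$, or, more conceptually, by Lemma 3.1 applied to the symmetric $M^\top$ and the skew-symmetric $A = M^\top S - SM$. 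Either route delivers $\frac{d}{dt}E(U(t)) = -|||\nabla_G E(U(t))|||^2 \leq 0$.

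The main obstacle I foresee is notational rather than conceptual: one has to unpack the compact notation $\{V, W\}$ and $(VW^\top)X$ carefully enough to land on the matrix identity $A = M^\top S - SM$ and to keep $(V_{N_g})^N$-valued and $\mathbb{R}^{N\times N}$-valued quantities straight. Once that identity is in hand, the symmetry (25) and, optionally, Lemma 3.1 close the argument, with no additional regularity or well-posedness work needed beyond what equation (32) postulates.
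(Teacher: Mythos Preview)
Your proposal is correct and follows essentially the same route as the paper: both arguments show $\dot S=0$ via the skew-symmetry of $A:=\langle U^\top\mathcal{A}_U U\rangle$ (the paper invokes the operator-level skew-symmetry of $\mathcal{A}_U$ directly, you obtain it from the explicit formula $A=M^\top S-SM$), and both kill the cross term in the energy computation by Lemma~3.1 applied to the symmetric $M$ and the skew $A$, which is exactly what you propose as the ``more conceptual'' option.
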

    \begin{proof}
    A direct calculation shows that
      \begin{equation*}
      \begin{split}
        \frac{\textup{d}}{\textup{d}t}\big\langle U(t)^\top U(t)\big\rangle &= \bigg\langle\Big(\frac{\textup{d}}{\textup{d}t}U(t)\Big)^\top U(t)\bigg\rangle + \Big\langle U(t)^\top \frac{\textup{d}}{\textup{d}t}U(t)\Big\rangle\\
        &= \Big(\big\langle U(t)^\top \mathcal{A}_{U(t)} U(t)\big\rangle \Big) - \Big(\big\langle U(t)^\top \mathcal{A}_{U(t)} U(t)\big\rangle \Big) = 0,
      \end{split}
      \end{equation*}
      which indicates
      \begin{equation*}
        \big\langle{U(t)}^\top U(t)\big\rangle = I_N
      \end{equation*}
      due to $\langle{U_0}^\top U_0\rangle = I_N$. Consequently, we see from Lemma \ref{Lemma: ZeroTrace} that
      \begin{equation}
      \begin{split}
            &\Big|\Big|\Big| \nabla_{G} E\big(U(t)\big) \Big|\Big|\Big|^2 - \textup{tr}\Big\langle\nabla E\big(U(t)\big)^\top \nabla_{G} E\big(U(t)\big)\Big\rangle\\
        =&\textup{tr}\bigg(\Big\langle \nabla E\big(U(t)\big)^\top U(t)\Big\rangle \Big\langle U(t)^\top \nabla_{G} E\big(U(t)\big)\Big\rangle\bigg) = 0.
      \end{split}
      \end{equation}
      As a result,
      \begin{equation}
        \frac{\textup{d}E\big( U(t) \big)}{\textup{d}t} = \frac{\delta E}{\delta U}\cdot \frac{\textup{d}U}{\textup{d}t}= -\textup{tr}\Big\langle\nabla E\big(U(t)\big)^\top \nabla_{G} E\big(U(t)\big)\Big\rangle = -\Big|\Big|\Big| \nabla_{G} E\big(U(t)\big) \Big|\Big|\Big|^2\leqslant 0.
      \end{equation}
    \end{proof}

    \subsection{Critical points}

    We denote Lagrange function of \eqref{Equ: EnergyGrassmann}
    \begin{equation}
      \mathcal{L}(U, \Lambda) = E(U) - \frac12\big(\langle U^\top U\rangle - I_N \big)\Lambda
    \end{equation}
    for $U \in (V_{N_g})^N$ and $\Lambda\in\mathbb{R}^{N\times N}$, then the corresponding first-order necessary condition is as follows
    \begin{eqnarray}
	\nabla_U \mathcal{L}(U, \Lambda)			&\equiv& \nabla E(U) - U\Lambda = 0,\label{Equ: EigenGrad}\\
	\nabla_{\Lambda} \mathcal{L}(U, \Lambda)	&\equiv& \frac12\big(I_N - \langle U^\top U\rangle \big) = 0.\label{Equ: EigenOrtho}
    \end{eqnarray}

    We call $[U]$ a critical point of \eqref{Equ: EnergyGrassmann} if
    \[\nabla_{G} E(U) = 0.\]
    Obviously, for such a critical point, we have
    \[
    \nabla E(U) = U \langle U^\top \nabla E(U)\rangle,
    \]
    which suggests
    \begin{eqnarray*}
	\nabla_U \mathcal{L}\big(U, \langle U^\top \nabla E(U)\rangle\big)			= 0,\\
	\nabla_{\Lambda} \mathcal{L}\big(U, \langle U^\top \nabla E(U)\rangle\big)	= 0.
    \end{eqnarray*}
    Thus we see that such a critical point may be a local minimizer.

    As $t\to\infty$, we know that energy $E\big( U(t) \big)$ decreases monotonically, thus $\lim\limits_{t\rightarrow\infty}$ $E\big( U(t) \big)$ exists provided that $E\big( U(t) \big)$ is bounded from below. The following statement tells us the asymptotical behavior of the extended gradient flow(c.f. \cite{Lyapunov}).
    \begin{theorem}\label{Theo: Inf ConvergeOp}
      If $U(t)$ is a solution of \eqref{Equ: GradientFlowOp}, then
      \begin{equation}
      \begin{split}
        &\liminf\limits_{t\rightarrow\infty} \Big|\Big|\Big| \nabla_{G} E\big(U(t)\big) \Big|\Big|\Big| = 0.
      \end{split}
      \end{equation}
    \end{theorem}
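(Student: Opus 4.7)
The plan is to turn the dissipation identity from Proposition~\ref{Prop: GradientFlowOp} into an integrability statement and then argue by contradiction.

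First I would use Proposition~\ref{Prop: GradientFlowOp} twice: once to put $U(t)\in\mathcal{M}^N$ for every $t\geqslant 0$, and once to record the identity
\begin{equation*}
\frac{\textup{d}E(U(t))}{\textup{d}t}=-\bigl|\bigl|\bigl|\nabla_{G}E(U(t))\bigr|\bigr|\bigr|^{2}.
\end{equation*}
Because $E$ is assumed bounded from below on $\mathcal{M}^N$ and the preceding identity shows that $E(U(t))$ is non-increasing in $t$, the limit $E_{\infty}:=\lim_{t\to\infty}E(U(t))$ exists and is finite.

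Next I would integrate the dissipation identity from $0$ to an arbitrary $T>0$ to obtain
\begin{equation*}
\int_{0}^{T}\bigl|\bigl|\bigl|\nabla_{G}E(U(t))\bigr|\bigr|\bigr|^{2}\,\textup{d}t = E(U_{0}) - E(U(T)).
\end{equation*}
Letting $T\to\infty$ and using the existence of $E_{\infty}$ yields the uniform bound
\begin{equation*}
\int_{0}^{\infty}\bigl|\bigl|\bigl|\nabla_{G}E(U(t))\bigr|\bigr|\bigr|^{2}\,\textup{d}t \;=\; E(U_{0})-E_{\infty} \;<\;+\infty.
\end{equation*}

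Finally, I would conclude by contradiction. Suppose $\liminf_{t\to\infty}|||\nabla_{G}E(U(t))|||>0$; then there would exist $\varepsilon>0$ and $T_{0}>0$ such that $|||\nabla_{G}E(U(t))|||^{2}\geqslant \varepsilon$ for all $t\geqslant T_{0}$, forcing $\int_{T_{0}}^{\infty}|||\nabla_{G}E(U(t))|||^{2}\,\textup{d}t=+\infty$, which contradicts the finite bound just obtained. Hence the liminf must be zero.

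The argument is essentially soft and relies only on the monotone-bounded principle plus a standard finite-integral vs.\ positive-liminf contradiction. The only mildly delicate point I anticipate is verifying that $t\mapsto |||\nabla_{G}E(U(t))|||^{2}$ is integrable in the Lebesgue sense, but this follows from the continuity of $\nabla_{G}E$ on $(V_{N_{g}})^{N}$ and of $t\mapsto U(t)$, so no real obstacle arises.
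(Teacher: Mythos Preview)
Your proposal is correct and follows essentially the same route as the paper: integrate the dissipation identity from Proposition~\ref{Prop: GradientFlowOp} to obtain $\int_{0}^{\infty}|||\nabla_{G}E(U(t))|||^{2}\,\textup{d}t<+\infty$, then deduce that the liminf of the integrand vanishes. The paper merely states this last implication directly from nonnegativity, whereas you spell out the contradiction argument, but the substance is identical.
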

    \begin{proof}
     We see from Proposition \ref{Prop: GradientFlowOp} that
      \begin{equation*}
      \begin{split}
        &\int_{0}^{+\infty} \Big|\Big|\Big| \nabla_{G} E\big(U(t)\big) \Big|\Big|\Big|^2 \textup{d} t = -\int_{0}^{+\infty} \frac{\textup{d} E(U)}{\textup{d}t} \textup{d}t\\
        =& E\big(U(0)\big) - \lim\limits_{t\rightarrow \infty} E\big(U(t)\big) < +\infty.
      \end{split}
      \end{equation*}
      Since $\Big|\Big|\Big| \nabla_{G} E\big(U(t)\big) \Big|\Big|\Big|^2$ is nonnegative function, we have
      \begin{equation*}
      \liminf\limits_{t\rightarrow\infty} \Big|\Big|\Big| \nabla_{G} E\big(U(t)\big) \Big|\Big|\Big| = 0.
      \end{equation*}
    \end{proof}

    Suppose that the local minimizer $[U^*]$ is the unique critical point of \eqref{Equ: EnergyGrassmann} in $B([U^*], \delta_1)$. For a fixed constant $\delta_2 \in(0, \delta_1]$, we define
      \begin{equation*}
        E_0 = \min \{ E([\tilde U])~|~[\tilde U] \in \overline{B([U^*], \delta_1)\backslash B([U^*], \delta_2)}\}.
      \end{equation*}
    Here and hereafter, we assume that as an operator from $\big(V_{N_g}\big)^N$ to $\big(V_{N_g}\big)^N$, $\nabla E$ is continuous in $B(U^*, \delta_1)$.

    \begin{theorem}\label{Theo: ContConv}
    If the initial value satisfies \[E(U_0)\leqslant \frac{E_0 + E(U^*)}{2}\equiv E_1,\]
      then
      \begin{eqnarray*}
      \lim\limits_{t\rightarrow\infty} \Big|\Big|\Big| \nabla_{G} E\big(U(t)\big) \Big|\Big|\Big| = 0,\\
      \lim\limits_{t\rightarrow\infty} E\big(U(t)\big) = E(U^*),\\
      \lim\limits_{t\rightarrow\infty}\textup{dist}([U(t)], [U^*]) = 0.
      \end{eqnarray*}
    \end{theorem}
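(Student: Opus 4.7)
The plan is to trap the trajectory $[U(t)]$ inside $B([U^*], \delta_2)$ using the monotone energy decay from Proposition \ref{Prop: GradientFlowOp}, and then to combine the $\liminf$ result of Theorem \ref{Theo: Inf ConvergeOp} with compactness of this ball and uniqueness of $[U^*]$ as a critical point in $B([U^*], \delta_1)$ to obtain all three limits.

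First I would establish the trapping: by monotonicity, $E(U(t)) \leqslant E(U_0) \leqslant E_1 < E_0$ for every $t \geqslant 0$. Taking the relevant case $[U_0] \in B([U^*], \delta_2)$ (the only possibility consistent with the local convergence being claimed), I would argue by contradiction. The map $t \mapsto \textup{dist}([U(t)], [U^*])$ is continuous, since $U(t)$ depends continuously on $t$ as the solution of an ODE in the finite-dimensional space $(V_{N_g})^N$, and the infimum over the compact group $\mathcal{O}^N$ defining the quotient distance depends continuously on $U(t)$. If $[U(t)]$ ever escaped $B([U^*], \delta_2)$, it would have to cross the annulus $\overline{B([U^*], \delta_1) \setminus B([U^*], \delta_2)}$, on which $E \geqslant E_0 > E_1$, contradicting the energy bound; hence $[U(t)] \in B([U^*], \delta_2)$ for all $t \geqslant 0$.

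Next I would upgrade $\liminf$ to $\lim$. Theorem \ref{Theo: Inf ConvergeOp} and its proof provide $\liminf_{t\to\infty} |||\nabla_{G} E(U(t))||| = 0$ together with $\int_0^{+\infty} |||\nabla_{G} E(U(t))|||^2 \,\textup{d}t < +\infty$. Because the trajectory stays in the compact set $\overline{B(U^*, \delta_2)} \cap \mathcal{M}^N \subset (V_{N_g})^N$ and $\nabla_{G} E$ is continuous there, $|||\nabla_{G} E(U(t))|||$ is bounded; so $U(t)$ is Lipschitz in $t$ and $t \mapsto |||\nabla_{G} E(U(t))|||$ is uniformly continuous. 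A standard Barbalat-type argument (a uniformly continuous nonnegative function with finite $\textup{L}^2$-integral tends to zero) then yields $\lim_{t\to\infty} |||\nabla_{G} E(U(t))||| = 0$. Monotonicity and boundedness below force $E_{\infty} := \lim_{t} E(U(t))$ to exist. Given any sequence $t_n \to \infty$, compactness of $\overline{B([U^*], \delta_2)}$ extracts a subsequence with $[U(t_{n_k})] \to [\bar U]$; continuity of $\nabla_{G} E$ gives $\nabla_{G} E(\bar U) = 0$, so $[\bar U]$ is a critical point inside $B([U^*], \delta_1)$, and uniqueness forces $[\bar U] = [U^*]$. Continuity of $E$ then identifies $E_{\infty} = E(U^*)$, and applying the same subsequence reasoning to any sequence with $\textup{dist}([U(t_n)], [U^*])$ bounded away from $0$ produces a contradiction, whence $\textup{dist}([U(t)], [U^*]) \to 0$.

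The main obstacle is the trapping step: it demands that the quotient distance vary continuously along the flow so that an intermediate-value style argument rules out the trajectory jumping over the annulus, and it also supplies the compactness on which everything downstream rests. Once trapping is secured, the remainder is standard compactness-plus-uniqueness and Barbalat-style reasoning, both of which also rely on the boundedness of $|||\nabla_{G} E|||$ that trapping guarantees.
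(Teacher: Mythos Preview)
Your proposal is correct and, in the essential steps (trapping in $B([U^*],\delta_2)$, compactness, uniqueness of the critical point), aligns with the paper. The main methodological difference is the order in which the three limits are obtained. You invoke a Barbalat-type argument to prove $\lim_t |||\nabla_G E(U(t))|||=0$ first, and then deduce energy and distance convergence from that. The paper instead establishes energy convergence first (from the $\liminf$ subsequence of Theorem~\ref{Theo: Inf ConvergeOp} alone, plus monotonicity), then distance convergence by the same contradiction-via-compactness you sketch, and only at the end recovers gradient convergence by picking $P(t)\in\mathcal{O}^N$ with $|||U(t)P(t)-U^*|||=\textup{dist}([U(t)],[U^*])$ and using the orthogonal invariance $|||\nabla_G E(U(t))|||=|||\nabla_G E(U(t)P(t))|||\to|||\nabla_G E(U^*)|||=0$. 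Your Barbalat route is a bit more analytical but sidesteps this last trick; the paper's route is more elementary, needing only continuity of $\nabla_G E$ and no uniform-continuity argument. You are also more explicit than the paper about the trapping step: the paper tacitly takes $[U_0]\in B([U^*],\delta_2)$ and passes over the intermediate-value argument that you spell out.
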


    \begin{proof}
    We obtain from Theorem \ref{Theo: Inf ConvergeOp} that there exists a sequence $\{\tau_k\}_{i = 1}^{\infty}$ so that $\lim\limits_{k\rightarrow\infty}\tau_k = +\infty$ and $\lim\limits_{k\rightarrow\infty} \big|\big|\big| \nabla_{G} E\big(U(\tau_k)\big) \big|\big|\big| = 0$. The uniqueness of critical point in $B([U^*], \delta_1)$ implies $E_0> E([U^*])$. Due to $E([U(t)]) \leqslant E_1, \forall t\geqslant 0, $ we have
        \begin{equation*}
          [U(\tau_k)]\in B([U^*], \delta_2)\bigcap \mathcal{L}_{E_1},
        \end{equation*}
    where $\mathcal{L}_{E_1}=\{[U]\in\mathcal{G}^N: E(U)\le E_1\}$ is the level set.
    Since set
    \begin{equation*}
    \mathcal{S}= \{\tilde U\in (V_{N_g})^N:[\tilde U]\in B([U^*], \delta)\bigcap \mathcal{L}_{E_1}\}
    \end{equation*}
    is compact, there exist a subsequence $\{U(\tau_{k_l})\}$ and $\hat U\in\mathcal{S}$ that $\lim\limits_{l\rightarrow\infty}U(\tau_{k_l}) = \hat U$. Since $\nabla E$ is continuous, then $\nabla_{G} E$ is also continuous, so $\nabla_{G} E(\hat U) = 0$. By the uniqueness of critical point in $B([U^*], \delta_1)$ again, we get $[\hat U] = [U^*]$ and
    \begin{equation*}
      \lim\limits_{t\rightarrow\infty} E\big(U(t)\big) = \lim\limits_{l\rightarrow\infty} E\big(U(\tau_{k_l})\big) = E(U^*).
    \end{equation*}
    We claim that $\lim\limits_{t\rightarrow\infty} \textup{dist}([U(t)], [U^*]) = 0$. Otherwise, there exists a subsequence $\{U(\tau_p)\}$ that for some fixed $\hat\delta > 0$, $\textup{dist}([U(\tau_p)], [U^*]) \geqslant \hat\delta$. Since $\mathcal{S}$ is compact, there exist a subsequence $\{U(\tau_{p_q})\}$ and $\bar U\in\mathcal{S}$ that $\lim\limits_{q\rightarrow\infty}U(\tau_{p_q}) = \bar U$. Therefore
    \[E(\bar U) = \lim\limits_{q\rightarrow\infty} E\big(U(\tau_{p_q})\big) = E([U^*]),\]
    and $[\bar U] = [U^*]$, which contradicts the assumption $\textup{dist}([U(\tau_p)], [U^*]) \geqslant \hat\delta$.

    Clearly, there exists $P(t)\in\mathcal{O}^N$ that
    \begin{equation}
      |||U(t)P(t) - U^*|||= \textup{dist}([U(t)], [U^*]),
    \end{equation}
    then
    \begin{equation*}
       \lim\limits_{t\rightarrow\infty} \Big|\Big|\Big| \nabla_{G} E\big(U(t)\big) \Big|\Big|\Big| =  \lim\limits_{t\rightarrow\infty} \Big|\Big|\Big| \nabla_{G} E\big(U(t)P(t)\big) \Big|\Big|\Big| = \Big|\Big|\Big| \nabla_{G} E\big(U^*\big) \Big|\Big|\Big| = 0.
    \end{equation*}
    \end{proof}

    Indeed, we may have some convergence rate.
    \begin{theorem}
    If $E(U_0)\leqslant E_1$ and
      \begin{equation}\label{Equ: Hess}
        \textup{Hess}_G E(U)[D, D] \geqslant \sigma ||| D |||^2 \quad \forall [U]\in B([U^*], \delta_3),\: \forall D \in \mathcal{T}_{[U]}\mathcal{G}^N\bigcap(V_{N_g})^N
      \end{equation}
      for some $\delta_3\in(0,\delta_1]$ and $\sigma > 0$,
      then there exists $\hat{T} > 0$ such that
      \begin{eqnarray*}
        &\Big|\Big|\Big|\nabla_{G} E\big(U(t)\big)\Big|\Big|\Big|\leqslant e^{-\sigma (t - \hat T)},\\
        &E\big(U(t)\big) - E(U^*) \leqslant \frac{1}{2\sigma} e^{-2\sigma (t - \hat T)}
      \end{eqnarray*}
      for all $t \geqslant {\hat T}$.
    \end{theorem}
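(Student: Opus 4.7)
The plan is to use Theorem \ref{Theo: ContConv} to enter the region where the Hessian lower bound holds, then exploit the gradient flow structure to derive exponential decay of the gradient, and finally integrate to obtain the energy estimate.

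First, since $E(U_0)\leqslant E_1$, Theorem \ref{Theo: ContConv} gives $\textup{dist}([U(t)],[U^*])\to 0$ and $|||\nabla_{G}E(U(t))|||\to 0$. Hence I can choose $\hat T > 0$ large enough that for every $t\geqslant \hat T$,
\begin{equation*}
[U(t)]\in B([U^*],\delta_3) \quad\text{and}\quad |||\nabla_{G}E(U(\hat T))|||\leqslant 1.
\end{equation*}
By Proposition \ref{Prop: GradientFlowOp}, $U(t)\in\mathcal{M}^N\cap (V_{N_g})^N$ for all $t$, and the symmetry identity \eqref{Equ: GradientSym} implies $\langle \nabla_{G}E(U(t))^\top U(t)\rangle = 0$, so $\nabla_{G}E(U(t)) \in \mathcal{T}_{[U(t)]}\mathcal{G}^N\cap(V_{N_g})^N$. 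Thus the assumption \eqref{Equ: Hess} can be invoked with $D=\nabla_{G}E(U(t))$.

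Next, for the gradient estimate, I would differentiate $|||\nabla_{G}E(U(t))|||^2$ along the flow. By the chain rule and $\dot U=-\nabla_{G}E(U)$, the standard Riemannian gradient flow identity yields
\begin{equation*}
\frac{1}{2}\frac{\textup{d}}{\textup{d}t}|||\nabla_{G}E(U(t))|||^2 = -\textup{Hess}_{G}E(U(t))[\nabla_{G}E(U(t)),\nabla_{G}E(U(t))] \leqslant -\sigma|||\nabla_{G}E(U(t))|||^2,
\end{equation*}
where the last inequality uses \eqref{Equ: Hess}. A Gronwall comparison then produces $|||\nabla_{G}E(U(t))|||^2\leqslant |||\nabla_{G}E(U(\hat T))|||^2\,e^{-2\sigma(t-\hat T)}\leqslant e^{-2\sigma(t-\hat T)}$, which gives the first estimate after taking square roots.

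For the energy estimate, I would combine $\frac{\textup{d}E}{\textup{d}t}=-|||\nabla_{G}E(U)|||^2$ from Proposition \ref{Prop: GradientFlowOp} with $E(U(t))\to E(U^*)$ from Theorem \ref{Theo: ContConv}, and simply integrate the already-established gradient bound:
\begin{equation*}
E(U(t))-E(U^*) = \int_{t}^{+\infty} |||\nabla_{G}E(U(s))|||^2\,\textup{d}s \leqslant \int_{t}^{+\infty} e^{-2\sigma(s-\hat T)}\,\textup{d}s = \frac{1}{2\sigma}e^{-2\sigma(t-\hat T)}.
\end{equation*}

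The main technical obstacle is justifying the differential identity $\frac{1}{2}\frac{\textup{d}}{\textup{d}t}|||\nabla_{G}E(U(t))|||^2 = -\textup{Hess}_{G}E(U(t))[\nabla_{G}E,\nabla_{G}E]$ in the present extended-gradient formulation \eqref{Equ:GrassmannGradient}, because $\nabla_{G}E$ has been extended off $\mathcal{M}^N$ to all of $(V_{N_g})^N$. One must differentiate this extension along the trajectory $U(t)$, which stays on $\mathcal{M}^N$, and verify that the result coincides with the covariant Hessian contraction---this is where the tangency $\langle \nabla_{G}E(U(t))^\top U(t)\rangle = 0$ established above is essential. Once this identity is in hand, the remaining Gronwall and integration steps are routine.
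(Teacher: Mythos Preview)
Your proposal is correct and matches the paper's proof essentially step for step: enter $B([U^*],\delta_3)$ via Theorem \ref{Theo: ContConv}, establish the differential inequality for $|||\nabla_G E(U(t))|||^2$, apply Gr\"onwall, then integrate using Proposition \ref{Prop: GradientFlowOp}. The paper handles the step you flag as the main obstacle---the identity $\tfrac{1}{2}\tfrac{d}{dt}|||\nabla_G E(U)|||^2 = -\textup{Hess}_G E(U)[\nabla_G E,\nabla_G E]$---by directly differentiating $\nabla_G E(U(t)) = \nabla E(U(t)) - U(t)\langle U(t)^\top \nabla E(U(t))\rangle$ along the flow and observing that the extra term $\textup{tr}\big(\langle \nabla_G E(U)^\top U\rangle\,\tfrac{d}{dt}\langle U^\top \nabla E(U)\rangle\big)$ vanishes because $\langle \nabla_G E(U)^\top U\rangle = 0$ on $\mathcal{M}^N$ (exactly the tangency you note); the remaining two terms combine into the Hessian contraction by definition.
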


    \begin{proof}
    We see that
    \begin{equation*}
    \begin{split}
      &\frac12\frac{\textup{d}}{\textup{d}t}\Big|\Big|\Big|\nabla_{G} E\big(U(t)\big)\Big|\Big|\Big|^2 = \textup{tr} \bigg( \Big\langle\nabla_{G} E\big(U(t)\big)^\top \frac{\textup{d}}{\textup{d}t} \nabla_{G} E\big(U(t)\big) \Big\rangle\bigg)\\
      =&\textup{tr} \bigg( \Big\langle\nabla_{G} E\big(U(t)\big)^\top \nabla^2 E\big(U(t)\big) \frac{\textup{d}}{\textup{d}t} U(t) \Big\rangle \bigg) \\
      &- \textup{tr} \bigg( \Big\langle \nabla_{G} E\big(U(t)\big)^\top \frac{\textup{d}}{\textup{d}t}U(t)\Big\rangle \Big\langle U(t)^\top\nabla E\big(U(t)\big) \Big\rangle\bigg)\\
      &-\textup{tr} \bigg( \Big\langle \nabla_{G} E\big(U(t)\big)^\top U(t)\Big\rangle \frac{\textup{d}}{\textup{d}t}\bigg(\Big\langle U(t)^\top\nabla E\big(U(t)\big) \Big\rangle\bigg)\bigg),
    \end{split}
    \end{equation*}
    which together with Lemma \ref{Lemma: ZeroTrace} leads to
     \begin{equation*}
      \frac12\frac{\textup{d}}{\textup{d}t}\Big|\Big|\Big|\nabla_{G} E\big(U(t)\big)\Big|\Big|\Big|^2
      = -\textup{Hess}_G E\big(U(t)\big)\big[\nabla_{G} E\big(U(t)\big), \nabla_{G} E\big(U(t)\big)\big].
    \end{equation*}
    Note that Theorem \ref{Theo: ContConv} implies that there exists $\hat T > 0$ such that
    \begin{equation}
      U(t)\in B([U^*], \delta_3), \quad \forall t\geqslant \hat T.
    \end{equation}
    Hence, we obtain from (\ref{Equ: Hess}) that
    \begin{equation}
      \frac{\textup{d}}{\textup{d}t}\Big|\Big|\Big|\nabla_{G} E\big(U(t)\big)\Big|\Big|\Big|^2 \leqslant - 2\sigma \Big|\Big|\Big|\nabla_{G} E\big(U(t)\big)\Big|\Big|\Big|^2,\quad t\geqslant \hat T.
    \end{equation}
    Using Gr\"{o}nwall's inequality we arrive at
    \begin{equation}
      \Big|\Big|\Big|\nabla_{G} E\big(U(t)\big)\Big|\Big|\Big|^2 \leqslant e^{-2\sigma(t-\hat T)},\quad t\geqslant \hat T.
    \end{equation}
    Therefore, for all $t\ge \hat T$, there hold
    \begin{equation}
      \Big|\Big|\Big|\nabla_{G} E\big(U(t)\big)\Big|\Big|\Big|\leqslant e^{-\sigma (t - \hat T)}
    \end{equation}
    and
    \begin{equation}
      E\big(U(t)\big) - E(U^*) = \int_t^{+\infty}\Big|\Big|\Big|\nabla_{G} E\big(U(t)\big)\Big|\Big|\Big|^2 \textup{d}t\leqslant \frac{1}{2\sigma} e^{-2\sigma (t - \hat T)}.
    \end{equation}
    \end{proof}

    We understand that (\ref{Equ: Hess}) has been already applied in \cite{dai2017conjugate,Opt7}. We observe that  $\sigma$ in \eqref{Equ: Hess} is related to the gap between the $(N+1)$-th eigenvalue and the $N$-th eigenvalue of the Kohn-Sham equation.

    \section{Temporal discretization}\label{Sec: Exact}
    We may apply various temporal discretization approaches to solve \eqref{Equ: GradientFlowOp}. In this section, we propose and analyze a midpoint point scheme. Our analysis shows that the  midpoint point scheme is quite efficient and recommended.
    \subsection{A midpoint scheme}
    Let $\{t_n:n = 0, 1, 2 \cdots \}\subset[0, +\infty)$ be discrete points such that
    \begin{equation}
      0 = t_0 < t_1 < t_2 < \cdots < t_n < \cdots,
    \end{equation}
    and $\lim\limits_{n\rightarrow+\infty} t_n = +\infty$. Set
    \begin{equation}
      \Delta t_n = t_{n + 1} - t_n,
    \end{equation}
    and consider a midpoint scheme as follows
    \begin{equation}\label{Equ: MidpointOp}
      \frac{U_{n + 1} - U_{n}}{\Delta t_n} = -\nabla_G E(U_{n + 1\slash2}),
    \end{equation}
    where $U_{n + 1\slash2} = (U_{n + 1} + U_{n})\slash{2}$. Equivalently    \begin{equation}\label{Equ: Midpoint}
        \frac{U_{n + 1} - U_{n}}{\Delta t_n} = - \mathcal{A}_{U_{n + 1\slash2}}U_{n + 1\slash2}.
    \end{equation}
 Our midpoint scheme is an implicit method and we will propose and analyze a practical scheme
    to solve \eqref{Equ: Midpoint} in the next section.

    First, we investigate the existence of the solution of \eqref{Equ: Midpoint} in a  neighborhood of $U^*$, which requires that $\nabla E(U)$ is Lipschitz continuous locally
    \begin{equation*}
      |||\nabla E(U_1) - \nabla E(U_2) ||| \leqslant L_0 ||| U_1 - U_2|||,~\forall U_1, U_2\in B(U^*, \delta_1),
    \end{equation*}
    which is true for LDA when $\rho > 0$. However, it is still open whether $\rho > 0$ \cite{Positivity}.
    \begin{lemma}\label{Lemma: Implicit Function TheoryOp}
       There exist such $\delta_a, \delta_b, \delta^* > 0$ and a unique function $g: B(U^*, \delta_a) \times [-\delta^*, \delta^*]\rightarrow B(U^*, \delta_b)$ which satisfies
      \begin{equation}\label{Equ: Implicit FunctionOp}
        \begin{split}
          g(U, s) - U = -s\nabla_G E\Big(\frac{g(U, s) + U}{2}\Big)
        \end{split}
      \end{equation}
      for some $\delta_a, \delta_b$ and $\delta^* >0$.
    \end{lemma}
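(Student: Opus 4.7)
The plan is to recast \eqref{Equ: Implicit FunctionOp} as a fixed-point problem and apply the Banach contraction principle (an implicit function theorem argument would also work, but since we only assume local Lipschitz continuity of $\nabla E$, a direct contraction argument is more transparent). Specifically, I would define, for each $(U,s)$ close to $(U^*,0)$, the map $T_{U,s}:(V_{N_g})^N\to (V_{N_g})^N$ by
\begin{equation*}
T_{U,s}(V) = U - s\,\nabla_G E\!\left(\frac{V+U}{2}\right),
\end{equation*}
so that a fixed point $V = T_{U,s}(V)$ is exactly a solution $g(U,s)$ of \eqref{Equ: Implicit FunctionOp}. Since $(V_{N_g})^N$ is finite-dimensional, I can freely use the ambient Hilbert structure induced by $|||\cdot|||$.

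The first key step is to propagate the local Lipschitz bound of $\nabla E$ into one for $\nabla_G E$. Using \eqref{Equ:GrassmannGradient}, $\nabla_G E(V)=\nabla E(V)\langle V^\top V\rangle - V\langle \nabla E(V),V\rangle^\top$ is a product of quantities that are bounded and Lipschitz on $B(U^*,\delta_1)$ (the bound on $\nabla E$ follows from Lipschitz continuity together with continuity at $U^*$, and $V\mapsto\langle V^\top V\rangle$ is manifestly $C^\infty$), so there exist constants $L, M>0$ with $|||\nabla_G E(V_1)-\nabla_G E(V_2)|||\leqslant L\,|||V_1-V_2|||$ and $|||\nabla_G E(V)|||\leqslant M$ for all $V,V_1,V_2$ in a suitable subball of $B(U^*,\delta_1)$. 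Then for any $V_1,V_2\in B(U^*,\delta_b)$ with $\delta_b$ small enough that $(V_i+U)/2$ stays in that subball whenever $U\in B(U^*,\delta_a)$ with $\delta_a\leqslant\delta_b$,
\begin{equation*}
|||T_{U,s}(V_1)-T_{U,s}(V_2)||| \leqslant \frac{|s|L}{2}\,|||V_1-V_2|||,
\end{equation*}
so $T_{U,s}$ is a contraction provided $|s|\leqslant\delta^*<2/L$.

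Next I would verify the self-mapping property $T_{U,s}\bigl(\overline{B(U^*,\delta_b)}\bigr)\subset \overline{B(U^*,\delta_b)}$. For $V\in \overline{B(U^*,\delta_b)}$ and $U\in \overline{B(U^*,\delta_a)}$,
\begin{equation*}
|||T_{U,s}(V)-U^*||| \leqslant |||U-U^*||| + |s|\,M \leqslant \delta_a + \delta^* M,
\end{equation*}
so it suffices to choose $\delta_a$, $\delta_b$ and $\delta^*$ satisfying, for instance, $\delta_a\leqslant\delta_b/2$, $\delta^*M\leqslant \delta_b/2$ and $\delta^*L<2$, together with $\delta_b\leqslant\delta_1-|||\cdot|||$-margin needed for the Lipschitz ball. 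With these choices the Banach fixed point theorem delivers, for each $(U,s)\in \overline{B(U^*,\delta_a)}\times[-\delta^*,\delta^*]$, a unique $g(U,s)\in \overline{B(U^*,\delta_b)}$ solving \eqref{Equ: Implicit FunctionOp}.

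The main obstacle I anticipate is bookkeeping rather than conceptual: one must choose the three radii in a compatible order so that (i) the Lipschitz/boundedness constants for $\nabla_G E$ are valid on the entire range of arguments $(V+U)/2$ appearing in $T_{U,s}$, (ii) $T_{U,s}$ is strictly contractive, and (iii) $T_{U,s}$ maps the closed ball $\overline{B(U^*,\delta_b)}$ into itself. A secondary subtlety is that the Lipschitz continuity of $\nabla E$ is only assumed locally (and, as the authors note, relies on $\rho>0$); this is why I would fix a neighborhood of $U^*$ first and derive the Lipschitz constant for $\nabla_G E$ from that, rather than attempting to invoke differentiability and the classical implicit function theorem.
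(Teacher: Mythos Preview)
Your proof is correct, but it takes a genuinely different route from the paper's. The paper defines $\mathcal{H}(X,Y,t)=Y-X+t\,\nabla_G E\bigl((Y+X)/2\bigr)$, observes that $\mathcal{H}(U^*,U^*,0)=0$ and $\partial_Y\mathcal{H}(U^*,U^*,0)=I$, and invokes the implicit function theorem directly. Your approach instead sets up the fixed-point map $T_{U,s}$ and applies the Banach contraction principle. The trade-off is this: the paper's argument is shorter (three lines) but tacitly requires $\nabla_G E$, hence $\nabla E$, to be $C^1$ near $U^*$ so that the classical implicit function theorem applies; your contraction argument needs only the local Lipschitz continuity of $\nabla E$ that is actually stated as the standing hypothesis, and as a by-product it makes the dependence of the admissible radii $\delta_a,\delta_b,\delta^*$ on the Lipschitz constant and the bound $M=\sup|||\nabla_G E|||$ explicit (e.g.\ $\delta^*<2/L$ for contractivity, $\delta_a+\delta^*M\leqslant\delta_b$ for self-mapping). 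Both yield the same conclusion; yours is the more elementary and, under the paper's own assumptions, the more self-contained route.
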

    \begin{proof}
     We define $\mathcal{H}$ on $(V_{N_g})^N\times(V_{N_g})^N\times \mathbb{R}$ by
     \begin{equation}
       \mathcal{H}(X, Y, t) := Y - X + t\nabla_G E\Big(\frac{Y + X}{2}\Big).
     \end{equation}
     Obviously, $\mathcal{H}(U^*, U^*, 0) = 0$ and $\frac{\partial}{\partial Y}\mathcal{H}(X, Y, t)$ exists. Since
     \begin{equation}
       \frac{\partial}{\partial Y}\mathcal{H}(U^*, U^*, 0) = I,
     \end{equation}
     we see from implicit function theory that there exists a unique function $g: B(U^*, \delta_a) \times [-\delta^*, \delta^*]\rightarrow B(U^*, \delta_b)$ which satisfies $\mathcal{H}(U, g(U, s), s) = 0$ for some $\delta_a, \delta_b, \delta^* > 0$. Thus we complete the proof.
    \end{proof}

    Due to Lemma \ref{Lemma: Implicit Function TheoryOp}, we see that $U_{n + 1} = g(U_n, \Delta t_n)$ is the solution of \eqref{Equ: MidpointOp}. Then we arrive at the following Algorithm \ref{Alg: Midpoint} and refer to Theorem \ref{Theo: MainConvergeOp} for the  choice of $\delta_T$.
    \begin{algorithm}\label{Alg: Midpoint}
      \caption{A midpoint scheme}
      Given $\varepsilon > 0$, $\delta_T > 0$, initial data~$U_0\in (V_{N_g})^N\bigcap\mathcal{M}^N$, calculate gradient $\nabla_{G} E(U_0)$, let $n = 0$\;
      \While{$|||\nabla_{G} E(U_n)||| > \varepsilon$}{
      Set time step $\Delta t_n \leqslant \delta_T$\;
      Solve
      \[
      \frac{U_{n + 1} - U_{n}}{\Delta t_n} = -\nabla_G E\bigg(\frac{U_{n + 1} + U_{n}}{2}\bigg)
      \]
      to get $U_{n + 1}$\;
      Let $n = n + 1$, calculate gradient $\nabla_{G} E(U_n)$\;
      }
    \end{algorithm}

    We will see from Proposition \ref{Prop: OrthoOp} that the approximations produced by midpoint scheme \eqref{Equ: MidpointOp} are orthogonality preserving, which is significant in electronic structure calculations, for instance. The following lemmas are helpful in our analysis.

    \begin{lemma}
      $(I + s\mathcal{A}_{U})^{-1}$ exists for all $s\in\mathbb{R}$ and $U\in(V_{N_g})^N$.
    \end{lemma}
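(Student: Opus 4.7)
The plan is to exploit the skew-symmetry of $\mathcal{A}_U$ with respect to the inner product $(\cdot,\cdot)$ on the finite-dimensional space $(V_{N_g})^N$. Once skew-symmetry is in hand, injectivity of $I+s\mathcal{A}_U$ follows from a one-line energy estimate, and finite dimensionality promotes injectivity to invertibility.

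First I would verify that $\mathcal{A}_U$ is skew-adjoint as an operator on $(V_{N_g})^N$. Using $\mathcal{A}_U = \nabla E(U)U^\top - U(\nabla E(U))^\top$, one expands
\[
\mathcal{A}_U V = \nabla E(U)\langle U^\top V\rangle - U\langle (\nabla E(U))^\top V\rangle
\]
and computes $(\mathcal{A}_U V, W) = \textup{tr}\langle (\mathcal{A}_U V)^\top W\rangle$ by pulling the $N\times N$ matrices $\langle U^\top V\rangle$ and $\langle (\nabla E(U))^\top V\rangle$ outside the $\langle\cdot^\top\cdot\rangle$ bracket and applying the cyclic property of the trace. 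This rewrites $(\mathcal{A}_U V, W)$ as the difference of two terms that are manifestly symmetric under the swap $V\leftrightarrow W$ with an overall sign change, so $(\mathcal{A}_U V, W) = -(V,\mathcal{A}_U W)$. In particular, $(V,\mathcal{A}_U V)=0$ for every $V\in(V_{N_g})^N$.

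Next, to prove injectivity of $I+s\mathcal{A}_U$, suppose $(I+s\mathcal{A}_U)V=0$ for some $V\in(V_{N_g})^N$. Taking the inner product with $V$ yields $|||V|||^2 + s(V,\mathcal{A}_U V)=0$, and the skew-symmetry kills the second term, forcing $V=0$. Since $\mathcal{A}_U$ is a linear endomorphism of the finite-dimensional vector space $(V_{N_g})^N$, an injective linear endomorphism is automatically bijective, so $(I+s\mathcal{A}_U)^{-1}$ exists for every $s\in\mathbb{R}$ and every $U\in(V_{N_g})^N$.

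There is no genuine obstacle here; this is the standard Cayley-transform observation for skew operators. The only subtlety worth being careful about is bookkeeping the definitions, namely using the inner product $(U,\hat U) = \textup{tr}\langle U^\top \hat U\rangle$, distinguishing $\langle U^\top V\rangle$ (an $N\times N$ matrix) from the operator $UV^\top$ acting on $(V_{N_g})^N$, and invoking finite dimensionality of $(V_{N_g})^N$ rather than of the ambient space $(\textup{H}^1(\mathbb{R}^3))^N$ to turn injectivity into invertibility.
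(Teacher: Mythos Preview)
Your proof is correct. Both your argument and the paper's hinge on the skew-symmetry of $\mathcal{A}_U$, but the executions diverge slightly: the paper invokes spectral theory, noting that a skew-symmetric operator has purely imaginary eigenvalues so that the eigenvalues of $I+s\mathcal{A}_U$ lie in $\{1+\mu_j\imath:\mu_j\in\mathbb{R}\}$ and are never zero. You instead use the direct energy estimate $(V,\mathcal{A}_U V)=0$ to kill the kernel, then appeal to finite dimensionality. Your route is marginally more elementary since it avoids any mention of complex eigenvalues, while the paper's version makes the Cayley-transform structure more visible and feeds directly into later estimates such as the bound $\Vert(I-s^2\mathcal{A}_U^2)^{-1}\Vert\leqslant 1$ used in Lemma~\ref{Lemma: Inner PropertyOp}.
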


    \begin{proof}
      Since $\mathcal{A}_{U}$ is skew-symmetric, the corresponding eigenvalues are pure imaginary numbers. As a result, the eigenvalues of $\big(I + s\mathcal{A}_{U}\big)$ belongs to the set
      \begin{equation}
        \{1 + \mu_j\imath:\mu_j\in\mathbb{R}\}
      \end{equation}
    where $\imath$ is the imaginary unit that $\imath^2 = -1$, which implies $\big(I + s\mathcal{A}_{U}\big)$ is invertible.
    \end{proof}

    \begin{lemma}\label{Lemma: OrthoOp}
      If $U\in(V_{N_g})^N\bigcap\mathcal{M}^N$, then
      \begin{equation*}
        \hat U \equiv \big(2(I + s\mathcal{A}_{\tilde U})^{-1}U - U\big)\in (V_{N_g})^N\bigcap\mathcal{M}^N
      \end{equation*}
      for all $s\in\mathbb{R}$ and $\tilde U\in(V_{N_g})^N$.
    \end{lemma}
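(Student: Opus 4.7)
The plan is to recognize $2(I+s\mathcal{A}_{\tilde U})^{-1} - I$ as a Cayley-type transform of the skew-symmetric operator $s\mathcal{A}_{\tilde U}$ and to verify the orthogonality in the form $\langle\hat U^\top\hat U\rangle=\langle U^\top U\rangle$ by a direct expansion. First I would check that $\hat U$ lives in $(V_{N_g})^N$: if $\tilde U\in (V_{N_g})^N$ and the previous lemma is applied in the sense that $\nabla E(\tilde U)$ is identified with an element of $(V_{N_g})^N$, then $\mathcal{A}_{\tilde U}$ maps $(V_{N_g})^N$ into itself; hence the invertibility already granted gives $(I+s\mathcal{A}_{\tilde U})^{-1}U\in(V_{N_g})^N$ and so $\hat U\in(V_{N_g})^N$.

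The central algebraic fact is that $\mathcal{A}_{\tilde U}$ is skew-adjoint with respect to the matrix-valued pairing $\langle V^\top W\rangle$, namely
\begin{equation*}
\langle V^\top\mathcal{A}_{\tilde U}W\rangle=-\langle(\mathcal{A}_{\tilde U}V)^\top W\rangle,\qquad\forall V,W\in(V_{N_g})^N.
\end{equation*}
This follows by unfolding $\mathcal{A}_{\tilde U}V=\nabla E(\tilde U)\langle\tilde U^\top V\rangle-\tilde U\langle\nabla E(\tilde U)^\top V\rangle$ and using $\langle A^\top B\rangle^\top=\langle B^\top A\rangle$: the two cross terms that appear on each side agree up to a sign. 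This is the only step where the specific structure of $\mathcal{A}_{\tilde U}=\{\nabla E(\tilde U),\tilde U\}$ enters.

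Next, set $Y=(I+s\mathcal{A}_{\tilde U})^{-1}U$, so that $U=Y+s\mathcal{A}_{\tilde U}Y$ and $\hat U=2Y-U=Y-s\mathcal{A}_{\tilde U}Y$. Expanding both $\langle U^\top U\rangle$ and $\langle\hat U^\top\hat U\rangle$, the quadratic $s^0$ and $s^2$ terms coincide while the $s^1$ cross terms are $s\bigl(\langle Y^\top\mathcal{A}_{\tilde U}Y\rangle+\langle(\mathcal{A}_{\tilde U}Y)^\top Y\rangle\bigr)$ for $U$ and the negative of this for $\hat U$; by the skew-adjointness above, these cross terms vanish identically. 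Consequently
\begin{equation*}
\langle\hat U^\top\hat U\rangle=\langle Y^\top Y\rangle+s^2\langle(\mathcal{A}_{\tilde U}Y)^\top\mathcal{A}_{\tilde U}Y\rangle=\langle U^\top U\rangle=I_N,
\end{equation*}
which gives $\hat U\in\mathcal{M}^N$ and completes the proof.

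The only real obstacle is the verification of the skew-adjointness identity; once that is in hand the rest is a symmetric expansion. I would present the skew-adjointness as a short separate computation (or a one-line remark invoking that $\{A,B\}$ is skew) so that the Cayley-style cancellation in the main bookkeeping step is fully transparent.
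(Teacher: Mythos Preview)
Your argument is correct and is essentially the same Cayley-transform argument the paper gives: the paper rewrites $\hat U=(I+s\mathcal{A}_{\tilde U})^{-1}(I-s\mathcal{A}_{\tilde U})U$ and uses skew-symmetry plus the commutation of $I\pm s\mathcal{A}_{\tilde U}$ to collapse $\langle\hat U^\top\hat U\rangle$ to $\langle U^\top U\rangle$, while you substitute $Y=(I+s\mathcal{A}_{\tilde U})^{-1}U$ and expand both Gram matrices directly, with the skew-adjointness killing the cross terms. The two computations are equivalent; your explicit check that the cross terms vanish and that $\hat U\in(V_{N_g})^N$ is a nice touch the paper leaves implicit.
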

    \begin{proof}
    A simple calculation shows that
      \begin{equation}
        \begin{split}
          \hat U &= 2(I + s\mathcal{A}_{\tilde U})^{-1}U - U = (I + s\mathcal{A}_{\tilde U})^{-1}\big(2I - (I + s\mathcal{A}_{\tilde U})\big)U\\
                 &= (I + s\mathcal{A}_{\tilde U})^{-1}(I - s\mathcal{A}_{\tilde U})U.
        \end{split}
      \end{equation}
    We have
    \begin{equation}
    \begin{split}
      \langle \hat U^\top  \hat U \rangle &= \langle U^\top (I + s\mathcal{A}_{\tilde U}) (I - s\mathcal{A}_{\tilde U})^{-1} (I + s\mathcal{A}_{\tilde U})^{-1}(I - s\mathcal{A}_{\tilde U})U \rangle = \langle U^\top U \rangle = I_N
    \end{split}
    \end{equation}
    and complete the proof.
    \end{proof}

    \begin{proposition}\label{Prop: OrthoOp}
      If $U_n$ is obtained from Algorithm \ref{Alg: Midpoint}, then $U_n\in$$(V_{N_g})^N\bigcap\mathcal{M}^N$ for all $n\in\mathbb{N}$.
    \end{proposition}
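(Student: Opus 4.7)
The plan is to prove the proposition by induction on $n$, using the base case $U_0 \in (V_{N_g})^N \cap \mathcal{M}^N$ (supplied by Algorithm \ref{Alg: Midpoint}) and then showing that the inductive step reduces directly to Lemma \ref{Lemma: OrthoOp}. The key is the algebraic identity that rewrites the midpoint update in the Cayley-like form appearing in that lemma.

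First, I would assume $U_n \in (V_{N_g})^N \cap \mathcal{M}^N$. Since $\Delta t_n \le \delta_T$, Lemma \ref{Lemma: Implicit Function TheoryOp} guarantees the existence of $U_{n+1}$ satisfying the implicit midpoint equation \eqref{Equ: Midpoint}, and since $\nabla_G E$ maps $(V_{N_g})^N$ into itself we automatically have $U_{n+1} \in (V_{N_g})^N$. Writing $s = \Delta t_n/2$ and using \eqref{Equ: GrassmannGradientDis}, the midpoint equation becomes
\begin{equation*}
U_{n+1} - U_n = -2s\,\mathcal{A}_{U_{n+1/2}} U_{n+1/2} = -s\,\mathcal{A}_{U_{n+1/2}}(U_{n+1} + U_n),
\end{equation*}
which I would rearrange to
\begin{equation*}
(I + s\,\mathcal{A}_{U_{n+1/2}})\,U_{n+1} = (I - s\,\mathcal{A}_{U_{n+1/2}})\,U_n.
\end{equation*}
Adding $(I + s\,\mathcal{A}_{U_{n+1/2}})U_n$ to both sides yields
\begin{equation*}
(I + s\,\mathcal{A}_{U_{n+1/2}})(U_{n+1} + U_n) = 2U_n,
\end{equation*}
so that
\begin{equation*}
U_{n+1} = 2\bigl(I + s\,\mathcal{A}_{U_{n+1/2}}\bigr)^{-1} U_n - U_n.
\end{equation*}

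This is precisely the expression $\hat U$ of Lemma \ref{Lemma: OrthoOp} with $\tilde U = U_{n+1/2}$ and the scalar $s = \Delta t_n/2$. Applying that lemma to the inductive hypothesis $U_n \in \mathcal{M}^N$ immediately gives $U_{n+1} \in (V_{N_g})^N \cap \mathcal{M}^N$, closing the induction.

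I do not expect any genuine obstacle here: the existence of $(I+s\mathcal{A}_{U_{n+1/2}})^{-1}$ is granted by the invertibility lemma, existence of the solution $U_{n+1}$ is granted by Lemma \ref{Lemma: Implicit Function TheoryOp}, and orthogonality is granted by Lemma \ref{Lemma: OrthoOp}. The only step that requires a bit of care is spotting the rearrangement that turns the midpoint scheme into the Cayley-type expression, but once this is done the argument is a one-line invocation of the previous lemma. The proof is therefore essentially a bookkeeping exercise on top of the machinery already established.
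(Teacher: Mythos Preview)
Your proposal is correct and follows essentially the same route as the paper: both arguments rewrite the midpoint update as $U_{n+1} = 2\bigl(I + \tfrac{\Delta t_n}{2}\mathcal{A}_{U_{n+1/2}}\bigr)^{-1}U_n - U_n$ and then invoke Lemma~\ref{Lemma: OrthoOp}. The only cosmetic difference is that the paper first isolates $U_{n+1/2} = \bigl(I + \tfrac{\Delta t_n}{2}\mathcal{A}_{U_{n+1/2}}\bigr)^{-1}U_n$ from a half-step equation and then uses $U_{n+1} = 2U_{n+1/2} - U_n$, whereas you manipulate the full midpoint relation directly; the algebra is equivalent.
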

    \begin{proof}
    We split \eqref{Equ: MidpointOp} into two equations
    \begin{equation}\label{Equ: Midpoint IntermediateOp}
      \begin{split}
      \frac{U_{n + 1\slash2} - U_{n}}{{\Delta t_n}\slash{2}} &= - \mathcal{A}_{U_{n + 1\slash2}}U_{n + 1\slash2},\\
      \frac{U_{n + 1} - U_{n + 1\slash2}}{{\Delta t_n}\slash{2}} &= - \mathcal{A}_{U_{n + 1\slash2}}U_{n + 1\slash2},
      \end{split}
    \end{equation}
    and obtain
    \begin{equation}\label{Equ: Midpoint Intermediate1Op}
    \begin{split}
      U_{n + 1\slash2} &= \Big(I + \frac{\Delta t_n}{2}\mathcal{A}_{U_{n + 1\slash2}}\Big)^{-1}U_n,\\
      U_{n + 1} &= 2\Big(I + \frac{\Delta t_n}{2}\mathcal{A}_{U_{n + 1\slash2}}\Big)^{-1}U_n - U_n.
    \end{split}
    \end{equation}
    Therefore, we arrive at the conclusion from Lemma \ref{Lemma: OrthoOp}.
    \end{proof}

    We see from \eqref{Equ: Midpoint IntermediateOp} that the midpoint scheme of gradient flow based method may be reviewed as a  mixed scheme of an implicit Euler method of a temporal step ${\Delta t_n}\slash{2}$ and an explicit Euler method of the temporal step ${\Delta t_n}\slash{2}$ provided an auxiliary point. We will see an crucial difference between our midpoint scheme of gradient flow based method and the retraction optimization method afterwards.

    \begin{lemma}\label{Lemma: Inner PropertyOp}
      If $U\in(V_{N_g})^N\bigcap\mathcal{M}^N$, then spectrum $\sigma\big(\langle {\bar U}^\top {\bar U} \rangle\big)$ of $\langle {\bar U}^\top {\bar U}\rangle$ satisfies
      \begin{equation}
        \sigma\big(\langle {\bar U}^\top {\bar U}\rangle\big)\subset [0, 1],
      \end{equation}
      where
      \begin{equation}
        \bar U\equiv (I + s\mathcal{A}_{\tilde U})^{-1}U
      \end{equation}
      for all $s\in\mathbb{R}$ and $\tilde U\in(V_{N_g})^N$.
    \end{lemma}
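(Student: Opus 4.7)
The plan is to exploit the relation $(I+s\mathcal{A}_{\tilde U})\bar U = U$ together with the orthogonality $\langle U^\top U\rangle = I_N$ to produce a clean identity for $\langle \bar U^\top \bar U\rangle$ from which the spectral inclusion is immediate. The key algebraic ingredient is that the operator $\mathcal{A}_{\tilde U}$ is skew-symmetric with respect to the matrix-valued pairing $\langle\cdot^\top\cdot\rangle$.

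First I would verify that for all $V,W\in(V_{N_g})^N$ there holds $\langle (\mathcal{A}_{\tilde U}W)^\top V\rangle = -\langle W^\top \mathcal{A}_{\tilde U} V\rangle$. This is a short direct calculation from the definition $\mathcal{A}_{\tilde U}V = \nabla E(\tilde U)\langle \tilde U^\top V\rangle - \tilde U\langle \nabla E(\tilde U)^\top V\rangle$, using only the identity $\langle A^\top B\rangle^\top = \langle B^\top A\rangle$, and it mirrors the matrix-level skew-symmetry of $\{U,W\}$ already noted in the preliminaries. In particular, setting $W=V$ shows that $\langle V^\top \mathcal{A}_{\tilde U}V\rangle$ is itself a skew-symmetric $N\times N$ matrix.

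Next I would substitute $U = \bar U + s\mathcal{A}_{\tilde U}\bar U$ into $\langle U^\top U\rangle = I_N$ and expand bilinearly into four terms; the two cross terms cancel by the skew-symmetry just established, leaving
\begin{equation*}
\langle \bar U^\top \bar U\rangle = I_N - s^2\,\langle (\mathcal{A}_{\tilde U}\bar U)^\top \mathcal{A}_{\tilde U}\bar U\rangle.
\end{equation*}
The subtracted matrix is a Gram matrix and hence positive semi-definite, so $\langle \bar U^\top \bar U\rangle \preceq I_N$; meanwhile $\langle \bar U^\top \bar U\rangle$ is itself a Gram matrix and therefore $\succeq 0$. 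Combining these two bounds yields $\sigma(\langle \bar U^\top \bar U\rangle)\subset [0,1]$.

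I do not anticipate a serious obstacle: the only nontrivial step is the operator-level skew-symmetry of $\mathcal{A}_{\tilde U}$, which is essentially a book-keeping exercise. Note that no smallness of $s$ and no structural assumptions on $\nabla E(\tilde U)$ beyond its values at $\tilde U$ are needed, so the conclusion holds uniformly for all $s\in\mathbb{R}$ and all $\tilde U\in(V_{N_g})^N$, which is exactly what the statement asks.
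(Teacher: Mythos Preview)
Your argument is correct. Both your proof and the paper's rest on the skew-symmetry of $\mathcal{A}_{\tilde U}$, but they deploy it differently. The paper computes
\[
\langle \bar U^\top \bar U\rangle = \langle U^\top (I - s^2\mathcal{A}_{\tilde U}^{\,2})^{-1} U\rangle
\]
and then bounds the largest eigenvalue via $\|\langle \bar U^\top \bar U\rangle\|_2 \le \|(I - s^2\mathcal{A}_{\tilde U}^{\,2})^{-1}\|$, finishing with the observation that a skew-symmetric operator has purely imaginary spectrum so this operator norm is at most $1$. You instead expand $I_N = \langle U^\top U\rangle$ with $U = (I+s\mathcal{A}_{\tilde U})\bar U$, let the cross terms cancel, and read off the exact identity $\langle \bar U^\top \bar U\rangle = I_N - s^2\langle (\mathcal{A}_{\tilde U}\bar U)^\top \mathcal{A}_{\tilde U}\bar U\rangle$, from which both spectral bounds follow by Gram-matrix positivity. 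Your route is a bit more elementary: it avoids any spectral decomposition of $\mathcal{A}_{\tilde U}$ and yields the full Loewner ordering $0 \preceq \langle \bar U^\top \bar U\rangle \preceq I_N$ directly rather than only a bound on the top eigenvalue. The paper's route, on the other hand, ties in naturally with its repeated use of the spectrum of $\mathcal{A}_{\tilde U}$ elsewhere (e.g.\ invertibility of $I+s\mathcal{A}_{\tilde U}$).
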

    \begin{proof}
    For any eigenvalue $\lambda_j \in \sigma\big(\langle {\bar U}^\top {\bar U} \rangle\big)$, we have
    \begin{equation}\label{Equ: Midpoint Pf1Op}
      \begin{split}
        &0 \leqslant \lambda_j \leqslant \Vert \langle {\bar U}^\top {\bar U} \rangle \Vert_2 = \Vert \langle U^\top \big(I - s^2(\mathcal{A}_{\tilde U})^2\big)^{-1} U\rangle \Vert_2\\
       \leqslant& \big\Vert \big(I - s^2(\mathcal{A}_{\tilde U})^2\big)^{-1}\big\Vert \Vert \langle U^\top U \rangle \Vert_2 = \big\Vert \big(I - s^2(\mathcal{A}_{\tilde U})^2\big)^{-1}\big\Vert.
      \end{split}
    \end{equation}
    Note that $\mathcal{A}_{\tilde U}$ is skew-symmetric, which implies its eigenvalues are pure imaginary numbers. We obtain
    \begin{equation}\label{Equ: Midpoint Pf2Op}
    \begin{split}
      &\Vert \big(I - s^2(\mathcal{A}_{\tilde U})^2\big)^{-1}\Vert\\
      =& \max\big\{(1 + s^2\mu_j^2)^{-1}:\mu_j {\imath} \in \sigma\big(\mathcal{A}_{\tilde U}\big), \mu_j \in \mathbb{R}\big\} \leqslant 1,
    \end{split}
    \end{equation}
    where $\imath$ is the imaginary unit satisfying $\imath^2 = -1$. This completes the proof.
    \end{proof}

    Combining \eqref{Equ: Midpoint Intermediate1Op} and Lemma \ref{Lemma: Inner PropertyOp}, we arrive at
    \begin{proposition}\label{Prop: Inner PropertyOp}
      If $U_n$ is obtained from Algorithm \ref{Alg: Midpoint}, then  spectrum\\ $\sigma\big(\langle {U_{n + 1\slash2}}^\top {U_{n + 1\slash2}} \rangle\big)$ of $\langle {U_{n + 1\slash2}}^\top {U_{n + 1\slash2}} \rangle$ satisfies
      \begin{equation}
        \sigma\big(\langle {U_{n + 1\slash2}}^\top {U_{n + 1\slash2}} \rangle\big)\subset[0, 1].
      \end{equation}
    \end{proposition}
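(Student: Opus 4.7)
The proposal is to treat this as a direct corollary of Lemma \ref{Lemma: Inner PropertyOp} combined with the orthogonality already established in Proposition \ref{Prop: OrthoOp}. Almost no new work is required; the crux is matching the iterates of the midpoint scheme with the abstract form $(I+s\mathcal{A}_{\tilde U})^{-1}U$ treated in the lemma.

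First I would recall, from Proposition \ref{Prop: OrthoOp}, that every iterate $U_n$ produced by Algorithm \ref{Alg: Midpoint} satisfies $U_n\in (V_{N_g})^N\cap\mathcal{M}^N$, so in particular $\langle U_n^\top U_n\rangle=I_N$. Next, from the first identity of \eqref{Equ: Midpoint Intermediate1Op}, the midpoint iterate admits the explicit representation
\begin{equation*}
U_{n+1/2}=\Bigl(I+\tfrac{\Delta t_n}{2}\mathcal{A}_{U_{n+1/2}}\Bigr)^{-1}U_n,
\end{equation*}
which is exactly of the form $\bar U=(I+s\mathcal{A}_{\tilde U})^{-1}U$ appearing in Lemma \ref{Lemma: Inner PropertyOp}, with the choices $s=\Delta t_n/2\in\mathbb{R}$, $\tilde U=U_{n+1/2}\in(V_{N_g})^N$, and $U=U_n\in(V_{N_g})^N\cap\mathcal{M}^N$.

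Having verified that all hypotheses of Lemma \ref{Lemma: Inner PropertyOp} are met, I would invoke the lemma directly to conclude
\begin{equation*}
\sigma\bigl(\langle U_{n+1/2}^\top U_{n+1/2}\rangle\bigr)\subset[0,1],
\end{equation*}
which is the desired statement. There is essentially no obstacle here: the only thing one must be slightly careful about is that $\tilde U=U_{n+1/2}$ itself depends on the iterate, but Lemma \ref{Lemma: Inner PropertyOp} is stated for an arbitrary $\tilde U\in(V_{N_g})^N$, so this circularity is harmless. The spectral bound on the intermediate Gram matrix is therefore inherited automatically from the skew-symmetry of $\mathcal{A}_{U_{n+1/2}}$, which makes $(I-s^2\mathcal{A}_{\tilde U}^2)^{-1}$ a contraction in operator norm, as already exploited in the proof of Lemma \ref{Lemma: Inner PropertyOp}.
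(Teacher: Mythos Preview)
Your proposal is correct and matches the paper's own approach exactly: the paper simply states that the proposition follows by combining \eqref{Equ: Midpoint Intermediate1Op} with Lemma \ref{Lemma: Inner PropertyOp}, which is precisely the identification $U_{n+1/2}=(I+\tfrac{\Delta t_n}{2}\mathcal{A}_{U_{n+1/2}})^{-1}U_n$ followed by an appeal to the lemma that you spell out. Your explicit mention of Proposition \ref{Prop: OrthoOp} to ensure $U_n\in\mathcal{M}^N$ just makes the implicit hypothesis check in the paper's one-line proof visible.
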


    Since
    \[\sigma\big(\langle {U}^\top {U}\rangle\big) = \{1\},\]
    for all $U\in \mathcal{M}^N$, we see from Proposition \ref{Prop: Inner PropertyOp} that for the  midpoint scheme of the gradient flow based model, the auxiliary updating points are inside the Stiefel manifold. Nevertheless, we understand from Lemma 3.2 in \cite{dai2017conjugate} that the auxiliary points for the retraction optimization method are outside the Stiefel manifold. In fact, since $U_{n}\in\mathcal{M}^N$ and $\langle {D_{n}}^\top  U_{n} \rangle= 0$, we have
    \[\langle{\tilde U_n}^\top {\tilde U_n}\rangle = I_N + (\Delta t_n)^2 \langle {D_{n}}^\top D_{n}\rangle,\]
    for $\tilde U_{n} = U_{n} + \Delta t_n D_{n}$ and obtain \cite{dai2017conjugate}
    \begin{proposition}\label{Prop: SVDOp}
      Suppose $U_{n}\in\mathcal{M}^N$, $\langle {D_{n}}^\top U_{n}\rangle = 0$, and~$\tilde U_{n} = U_{n} + \Delta t_n D_{n}$~is the auxiliary point of retraction optimization method, then
      \begin{equation}
        \sigma\big( \langle {\tilde U_{n}}^\top {\tilde U_n} \rangle \big) \subset \Big[1, 1 + (\Delta t_n)^2 \Vert \langle {D_{n}}^\top D_{n}\rangle \Vert_2 \Big].
      \end{equation}
    \end{proposition}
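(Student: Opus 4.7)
The plan is to compute $\langle \tilde U_n^\top \tilde U_n\rangle$ directly by expanding, then to exploit symmetry and positive semi-definiteness of $\langle D_n^\top D_n\rangle$ to pin down the spectrum.

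First I would substitute $\tilde U_n = U_n + \Delta t_n D_n$ into the bilinear form $\langle\cdot^\top \cdot\rangle$ and use bilinearity to expand into four terms:
\begin{equation*}
  \langle \tilde U_n^\top \tilde U_n\rangle = \langle U_n^\top U_n\rangle + \Delta t_n \langle U_n^\top D_n\rangle + \Delta t_n \langle D_n^\top U_n\rangle + (\Delta t_n)^2 \langle D_n^\top D_n\rangle.
\end{equation*}
The first term equals $I_N$ because $U_n \in \mathcal{M}^N$, and the two cross terms vanish by the hypothesis $\langle D_n^\top U_n\rangle = 0$ (noting $\langle U_n^\top D_n\rangle = \langle D_n^\top U_n\rangle^\top = 0$ as well). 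This collapses the expression to
\begin{equation*}
  \langle \tilde U_n^\top \tilde U_n\rangle = I_N + (\Delta t_n)^2 \langle D_n^\top D_n\rangle,
\end{equation*}
which is the key identity already flagged in the paragraph preceding the proposition.

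Next I would observe that $\langle D_n^\top D_n\rangle$ is a symmetric positive semi-definite matrix (being the Gram matrix of an $L^2$ inner product), so its spectrum lies in $[0,\|\langle D_n^\top D_n\rangle\|_2]$. Shifting by the identity and scaling by $(\Delta t_n)^2$ preserves eigenvectors, hence
\begin{equation*}
  \sigma\bigl(\langle \tilde U_n^\top \tilde U_n\rangle\bigr) = \{1 + (\Delta t_n)^2 \lambda : \lambda \in \sigma(\langle D_n^\top D_n\rangle)\} \subset \bigl[1,\,1 + (\Delta t_n)^2 \|\langle D_n^\top D_n\rangle\|_2\bigr],
\end{equation*}
which is the claimed inclusion.

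The argument is essentially a one-line expansion followed by the standard spectral-shift observation, so I do not anticipate any real obstacle; the only step that warrants care is recording that both cross terms vanish (using both $\langle D_n^\top U_n\rangle = 0$ and its transpose) and that $\langle D_n^\top D_n\rangle$ is symmetric positive semi-definite, so that the lower spectral bound is exactly $1$ (achieved iff $\langle D_n^\top D_n\rangle$ is singular) rather than something strictly larger. This contrast with Proposition \ref{Prop: Inner PropertyOp}, where the spectrum sits in $[0,1]$, is what the authors want to highlight: the retraction auxiliary points lie outside $\mathcal{M}^N$, whereas the midpoint auxiliary points lie inside.
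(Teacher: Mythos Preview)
Your proposal is correct and matches the paper's approach exactly: the paper derives the identity $\langle \tilde U_n^\top \tilde U_n\rangle = I_N + (\Delta t_n)^2 \langle D_n^\top D_n\rangle$ in the paragraph preceding the proposition and then reads off the spectral inclusion (citing \cite{dai2017conjugate}). Your expansion plus the Gram-matrix positive semi-definiteness observation is precisely what is needed.
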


    \subsection{Convergence}

    Now we investigate the convergence of the midpoint scheme. First we show that the energy decreases for small time step. In this section, we always assume that $\nabla E$ is local Lipschitz continuous in the neighborhood of a local minimizer $U^*\in(V_{N_g})^N\bigcap\mathcal{M}^N$ as follows
      \begin{equation}\label{Equ: Local LipschitzOp}
      \begin{split}
        ||| \nabla E(U_i) - \nabla E(U_j) ||| \leqslant L ||| U_i - U_j |||, \quad\forall U_i, U_j\in B\big(U^*, \max\{\delta_a, \delta_b\}\big)
      \end{split}
      \end{equation}

    \begin{lemma}\label{Lemma: Energy decreaseOp}
    There holds
      \begin{equation}\label{Equ: GradientGrassLip}
       \begin{split}
         ||| \nabla_{G}E(U_i) - \nabla_{G}E(U_j) ||| \leqslant L_1 ||| U_i - U_j |||, \quad \forall U_i, U_j\in B\big(U^*, \max\{\delta_a, \delta_b\}\big),
       \end{split}
       \end{equation}
     where $L_1 = 2\alpha\big(  2||| \nabla E(U^*) ||| + 2L \max\{\delta_a, \delta_b\} + \alpha L\big)$.
      Moreover, there exists a upper bound $\delta_s$ of $s$ that
      \begin{equation}\label{Equ: Energy DifOp}
      \begin{split}
        E(U) - E\big(g(U, s)\big) \geqslant \frac{s}{4N}\Big|\Big|\Big| \nabla_G E\Big(\frac{g(U,s) + E(U)}{2}\Big)\Big|\Big|\Big|^2,\\ \forall U \in B(U^*, \delta_a)\bigcap\mathcal{M}^N, \forall s \in[0, \delta_s],
      \end{split}
      \end{equation}
      where $\delta_a, \delta_b$ are defined in Lemma \ref{Lemma: Implicit Function TheoryOp}.
    \end{lemma}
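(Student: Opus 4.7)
My plan is to handle the two conclusions in succession: the Lipschitz bound is a direct expansion, while the energy decrease requires a midpoint Taylor argument together with a careful comparison between the inner product and the squared norm when $\bar U$ lies off the Stiefel manifold.

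For the Lipschitz estimate \eqref{Equ: GradientGrassLip}, I would expand
\[
\nabla_{G}E(U_i)-\nabla_{G}E(U_j) = \bigl(\nabla E(U_i)-\nabla E(U_j)\bigr)\langle U_i^{\top}U_i\rangle + \nabla E(U_j)\bigl(\langle U_i^{\top}U_i\rangle-\langle U_j^{\top}U_j\rangle\bigr) - (U_i-U_j)\langle \nabla E(U_i),U_i\rangle^{\top} - U_j\bigl(\langle \nabla E(U_i),U_i\rangle-\langle \nabla E(U_j),U_j\rangle\bigr)^{\top}
\]
and bound each of the four summands. On $B(U^{*},\max\{\delta_a,\delta_b\})$ we have $|||U|||\leqslant\alpha:=\sqrt{N}+\max\{\delta_a,\delta_b\}$ (since $|||U^{*}|||=\sqrt{N}$) and $|||\nabla E(U)|||\leqslant |||\nabla E(U^{*})|||+L\max\{\delta_a,\delta_b\}$ by \eqref{Equ: Local LipschitzOp}; the Gram-type quantities $\langle U^{\top}U\rangle$ and $\langle \nabla E(U),U\rangle$ are themselves Lipschitz in $U$ by the product rule with constants in terms of $\alpha$, $L$, and $|||\nabla E(U^{*})|||$. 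Combining these via $|||VP|||\leqslant |||V|||\,\|P\|_{2}$ and collecting terms produces exactly $L_{1}|||U_i-U_j|||$ with the stated $L_{1}$.

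For the energy inequality \eqref{Equ: Energy DifOp}, set $\tilde U := g(U,s)$ and $\bar U := (U+\tilde U)/2$, so that $\tilde U - U = -s\nabla_{G}E(\bar U)$ by \eqref{Equ: Implicit FunctionOp} and $\tilde U \in \mathcal{M}^N$ by Proposition~\ref{Prop: OrthoOp}. I would apply Taylor's theorem with base point at the midpoint,
\[
E(\tilde U) - E(U) = \mathrm{tr}\langle \nabla E(\bar U)^{\top}(\tilde U - U)\rangle + R,
\]
with $|R|\leqslant \tfrac{L}{4}|||\tilde U - U|||^{2}$ from the standard midpoint-rule estimate (using $\int_{0}^{1}|\tau-\tfrac{1}{2}|\,d\tau=\tfrac{1}{4}$ together with \eqref{Equ: Local LipschitzOp}). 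Substituting $\tilde U - U = -s\nabla_{G}E(\bar U)$ gives
\[
E(U) - E(\tilde U) \geqslant s\,\mathrm{tr}\langle \nabla E(\bar U)^{\top}\nabla_{G}E(\bar U)\rangle - \tfrac{L s^{2}}{4}|||\nabla_{G}E(\bar U)|||^{2}.
\]

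The main obstacle is then to bound the leading term $\mathrm{tr}\langle \nabla E(\bar U)^{\top}\nabla_{G}E(\bar U)\rangle$ from below by a fixed fraction of $|||\nabla_{G}E(\bar U)|||^{2}$. A direct algebraic manipulation along the lines of the proof of Proposition~\ref{Prop: GradientFlowOp} shows these two quantities coincide exactly when $\bar U\in\mathcal{M}^{N}$; in general their difference can be written as $\mathrm{tr}((I-B)(H-A^{2}))$, where $B=\langle\bar U^{\top}\bar U\rangle$, $A=\langle\bar U^{\top}\nabla E(\bar U)\rangle$, $H=B^{1/2}\langle\nabla E(\bar U)^{\top}\nabla E(\bar U)\rangle B^{1/2}$. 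Because $U\in\mathcal{M}^{N}$ and $\bar U-U=-\tfrac{s}{2}\nabla_{G}E(\bar U)$, the deviation $\|I-B\|_{2}=O(s)$ with constants depending only on the neighborhood, while $H$ and $A$ remain uniformly bounded on $B(U^{*},\delta_b)$; the trace-operator-norm conversion $|\mathrm{tr}(M)|\leqslant N\|M\|_{2}$ is where the factor of $N$ enters. Finally I would pick $\delta_{s}\in(0,\delta^{*}]$ small enough that simultaneously (i) $\bar U$ stays in the Lipschitz neighborhood, (ii) the correction is dominated by $\tfrac{1}{2N}|||\nabla_{G}E(\bar U)|||^{2}$, and (iii) $\tfrac{Ls}{4}\leqslant \tfrac{1}{4N}$; combining these three estimates produces \eqref{Equ: Energy DifOp}. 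The delicate step is to ensure $\delta_s$ depends only on $L$, $\alpha$, $|||\nabla E(U^{*})|||$, and $N$, not on $\nabla_{G}E(\bar U)$ itself, which rests on a careful homogeneity accounting of the correction terms.
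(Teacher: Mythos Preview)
Your treatment of the Lipschitz bound \eqref{Equ: GradientGrassLip} matches the paper's argument essentially line for line: both expand $\nabla_G E(U_i)-\nabla_G E(U_j)$ into the two blocks $\nabla E(U)\langle U^\top U\rangle$ and $U\langle U^\top\nabla E(U)\rangle$, bound $|||\nabla E(U)|||$ on the ball, and add up.

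For the energy-decrease part your strategy is correct but genuinely different from the paper's. You compare $\mathrm{tr}\langle\nabla E(\bar U)^\top\nabla_G E(\bar U)\rangle$ with $|||\nabla_G E(\bar U)|||^2$ perturbatively, using that their difference is $\mathrm{tr}\bigl((I-B)(GB-A^2)\bigr)$ (your $H=B^{1/2}GB^{1/2}$ should really be $GB$, but the structure is the same) together with $I-B=\tfrac{s^2}{4}\langle\nabla_G E(\bar U)^\top\nabla_G E(\bar U)\rangle$; this is where the homogeneity works out and your $\delta_s$ becomes uniform. The paper instead bypasses the perturbation entirely by the exact skew-symmetry identity
\[
\mathrm{tr}\bigl\langle\nabla E(\bar U)^\top\mathcal{A}_{\bar U}\bar U\bigr\rangle \;=\; -\tfrac12\,\mathrm{tr}\,\mathcal{A}_{\bar U}^2 \;=\; \tfrac12\|\mathcal{A}_{\bar U}\|^2,
\]
valid for \emph{every} $\bar U$, and then invokes Proposition~\ref{Prop: Inner PropertyOp} ($\sigma(\langle\bar U^\top\bar U\rangle)\subset[0,1]$, hence $|||\bar U|||\leqslant\sqrt N$) to get $|||\mathcal{A}_{\bar U}\bar U|||\leqslant\sqrt N\,\|\mathcal{A}_{\bar U}\|$ and thus the clean lower bound $\tfrac{1}{2N}|||\nabla_G E(\bar U)|||^2$ without any smallness of $s$. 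The only place the paper needs $s$ small is the mean-value remainder, yielding the explicit $\delta_s=\min\{1/(2NL),\delta^*\}$. Your route, carried through carefully, actually gives a leading constant close to $1$ rather than $\tfrac{1}{2N}$, at the price of a smaller and less explicit $\delta_s$; the paper's route trades a weaker constant for a one-line identity and a completely explicit step-size threshold.
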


    \begin{proof}
    First, we have that $||| \nabla E(U) |||$ is bounded over $B(U^*, \max\{\delta_a, \delta_b\})$ since
    \begin{equation}\label{Equ: localBoundedOp}
    \begin{split}
      & ||| \nabla E(U_i) ||| \leqslant ||| \nabla E(U^*) ||| + ||| \nabla E(U_i) - \nabla E(U^*) ||| \\
      \leqslant& ||| \nabla E(U^*) ||| + L ||| U_i - U^* ||| \leqslant ||| \nabla E(U^*) ||| + L \max\{\delta_a, \delta_b\},
    \end{split}
    \end{equation}
    which together with \eqref{Equ: Local LipschitzOp} and \eqref{Equ: localBoundedOp} leads to
    \begin{equation}\label{Equ: Lip2Op}
    \begin{split}
      &||| \nabla E(U_i) \langle U_i^\top U_i \rangle - \nabla E(U_j) \langle U_j^\top U_j \rangle |||\\
      \leqslant& \big|\big|\big| \nabla E(U_i) \big(\langle U_i^\top U_i \rangle - \langle U_j^\top U_j \rangle \big) \big|\big|\big|
      + ||| \big(\nabla E(U_i) -\nabla E(U_j)\big) \langle U_j^\top U_j\rangle ||| \\
      \leqslant& \big( ||| \nabla E(U^*) ||| + L \max\{\delta_a, \delta_b\}\big) \big(||| U_i ||| + ||| U_j |||\big) ||| U_i - U_j |||\\
      &+ L ||| U_j |||^2 ||| U_i - U_j |||\\
      \leqslant& \alpha \big( 2||| \nabla E(U^*) ||| + 2L \max\{\delta_a, \delta_b\} + \alpha L\big)||| U_i - U_j |||,
    \end{split}
    \end{equation}
    and
    \begin{equation}\label{Equ: Lip3Op}
      \begin{split}
        &||| U_i \langle U_i^\top \nabla E(U_i)\rangle - U_j \langle U_j^\top \nabla E(U_j)\rangle ||| \\
        \leqslant& \big|\big|\big| U_i \big(\langle U_i^\top \nabla E(U_i)\rangle - \langle U_j^\top \nabla E(U_j)\rangle\big) \big|\big|\big|
        + ||| (U_i - U_j) \langle U_j^\top \nabla E(U_j)\rangle ||| \\
        \leqslant& ||| U_i ||| \big(||| \nabla E(U_i) ||| + L||| U_j |||\big) ||| U_i - U_j ||| + ||| U_j|||\cdot||| \nabla E(U_j)|||\cdot ||| U_i - U_j |||\\
        \leqslant& \alpha\big(  2||| \nabla E(U^*) ||| + 2L \max\{\delta_a, \delta_b\} + \alpha L\big)||| U_i - U_j |||,
      \end{split}
    \end{equation}
    where $\alpha = \max\big\{ ||| U ||| : U\in B\big(U^*, \max\{\delta_a, \delta_b\}\big)\big\}$.

    Due to the triangle inequality
    \begin{equation}\label{Equ: Lip1Op}
    \begin{split}
      &||| \nabla E(U_i) - \nabla E(U_j)|||\\
      \leqslant& ||| \nabla E(U_i) \langle U_i^\top U_i \rangle - \nabla E(U_j) \langle U_j^\top U_j \rangle ||| + ||| U_i \langle U_i^\top \nabla E(U_i)\rangle - U_j \langle U_j^\top \nabla E(U_j)\rangle |||,
    \end{split}
    \end{equation}
    we obtain from \eqref{Equ: Lip2Op} and \eqref{Equ: Lip3Op} that
    \begin{equation}
      \begin{split}
        &||| \nabla E(U_i) - \nabla E(U_j)||| \leqslant L_1 ||| U_i - U_j |||.
      \end{split}
    \end{equation}

    Now we are going to prove the remainder. For given $s\in[0, \delta^*]$, Lemma \ref{Lemma: Implicit Function TheoryOp} tells us that $g(U, s)$ exists uniquely. Then we define $S(t) = t g(U, s) + (1 - t) U$ for $t\in[0,1]$, and see that $E\big( S(t) \big)$ is differentiable in (0,1). We understand that there exists a $\xi\in (0,1)$ such that
    \begin{equation}
      \begin{split}
        &E(g(U, s)) - E(U) = E(S(1)) - E(S(0)) = \textup{tr} \big\langle \nabla E\big(S(\xi)\big)^\top \frac{\textup{d}}{\textup{d}t}S(\xi)\big\rangle\\
        =& \textup{tr}\big\langle \nabla E\big(S(\xi)\big)^\top \big(g(U, s) - U\big) \big\rangle
        =-s\;\textup{tr}\Big\langle\nabla E\big(S(\xi)\big)^\top \mathcal{A}_{S(\frac12)}S\big(\frac12\big)\Big\rangle.
        \end{split}
    \end{equation}
    We divide the left part into two terms and obtain
    \begin{equation}\label{Equ: EstimateOp}
      \begin{split}
        E\big(g(U, s)\big) &- E(U) =-s\;\textup{tr}\left\langle\nabla E\Big(S\big(\frac12\big)\Big)^\top \mathcal{A}_{S(\frac12)}S\big(\frac12\big)\right\rangle\\
        &+ s\;\textup{tr}\left\langle \Big(\nabla E\Big(S\big(\frac12\big)\Big) - \nabla E\big(S(\xi)\big)\Big)^\top \mathcal{A}_{S(\frac12)}S\big(\frac12\big)\right\rangle.
      \end{split}
    \end{equation}

    For the first term, we see that
    \begin{equation}
    \begin{split}
      &\textup{tr}\left\langle\nabla E\Big(S\big(\frac12\big)\Big)^\top \mathcal{A}_{S(\frac12)}S\big(\frac12\big)\right\rangle = -\frac12\textup{tr} (\mathcal{A}_{S(\frac12)})^2\\
      =&\frac12\textup{tr} (\mathcal{A}_{S(\frac12)})^*(\mathcal{A}_{S(\frac12)})   =\frac12\big\Vert \mathcal{A}_{S(\frac12)} \big\Vert^2.
    \end{split}
    \end{equation}
    Due to Proposition \ref{Prop: Inner PropertyOp}, we have
    \begin{equation}
      \begin{split}
        &\Big|\Big|\Big| \mathcal{A}_{S(\frac12)}S\big(\frac12\big)\Big|\Big|\Big| \leqslant \Vert \mathcal{A}_{S(\frac12)}\Vert \cdot \Big|\Big|\Big| S\big(\frac12\big) \Big|\Big|\Big| \leqslant \sqrt{N} \big\Vert \mathcal{A}_{S(\frac12)}\big\Vert,
      \end{split}
    \end{equation}
    thus
    \begin{equation}\label{Equ: Tmp1Op}
      \begin{split}
        &\textup{tr}\left\langle\nabla E\Big(S\big(\frac12\big)\Big)^\top \mathcal{A}_{S(\frac12)}S\big(\frac12\big)\right\rangle \geqslant \frac{1}{2N} \Big|\Big|\Big| \mathcal{A}_{S(\frac12)}S\big(\frac12\big)\Big|\Big|\Big|^2.
      \end{split}
    \end{equation}

    For the second term of the last line in \eqref{Equ: EstimateOp}, since
    \begin{equation*}
    ||| S(t) - U^*||| = \big|\big|\big| t\big(g(U, s) - U^*\big) + (1 - t)(U - U^*) \big|\big|\big|\leqslant \max\{\delta_a, \delta_b\}, \forall t \in[0,1],
    \end{equation*}
    by local Lipschitz continuity of $\nabla E$, we get
    \begin{equation}\label{Equ: Tmp2Op}
    \begin{split}
      &\textup{tr}\left\langle\Big(\nabla E\Big(S\big(\frac12\big)\Big) - \nabla E\big(S(\xi)\big)\Big)^\top \mathcal{A}_{S(\frac12)}S\big(\frac12\big)\right\rangle\\
      \leqslant& L \big|\big|\big| S\big(\frac12\big) - S(\xi) \big|\big|\big| \cdot \big|\big|\big| \mathcal{A}_{S(\frac12)}S\big(\frac12\big) \big|\big|\big| \leqslant \frac{sL}{2} \big|\big|\big| \mathcal{A}_{S(\frac12)}S\big(\frac12\big)\big|\big|\big|^2.
    \end{split}
    \end{equation}

    Combining \eqref{Equ: EstimateOp} with \eqref{Equ: Tmp1Op} and \eqref{Equ: Tmp2Op}, we have
    \begin{equation}
      \begin{split}
        &E(U) - E\big(g(U, s)\big) \geqslant s\big(\frac{1}{2N} - \frac{sL}{2}\big)\Big|\Big|\Big| \nabla_G E\Big(\frac{g(U,s) + E(U)}{2}\Big)\Big|\Big|\Big|^2
      \end{split}
    \end{equation}
and reach the conclusion when $\delta_s = \min\big\{{1}\slash(2NL), \delta^*\big\}$.
    \end{proof}


     We define a mapping
      \begin{equation*}
          \hat g: B\big([U^*], \delta_a\big)\times[0, \delta^*] \rightarrow B\big([U^*], \delta_b\big)
      \end{equation*}
    as follows
      \begin{equation*}
        \hat g\big([U], s\big) = \big[g\big(\arg\min\limits_{\tilde U\in [U]} ||| \tilde U - U^* |||, s\big)\big].
      \end{equation*}
    and we always assume that the local minimizer $[U^*]\in\mathcal{G}^N$ is the unique critical point of \eqref{Equ: EnergyGrassmann} in $B\big([U^*], \delta_c\big)$ for some $\delta_c\in (0,\delta_1]$ from now on.
%

    \begin{lemma}\label{Lemma: Self-mappingOp}
    There holds
      \begin{equation*}
        \hat g\Big(B\big([U^*], \delta_e\big)\bigcap\mathcal{L}_{E_e}\times[0, \delta_T]\Big) \subset B\big([U^*], \delta_e\big)\bigcap\mathcal{L}_{E_e}
      \end{equation*}
      for some $\delta_e > 0$, $E_e\in\mathbb{R}$, $\delta_T \in [0, \delta_s]$ where $\delta_s$ is defined in Lemma \ref{Lemma: Energy decreaseOp}.
    \end{lemma}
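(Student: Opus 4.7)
My plan is to choose the constants $\delta_e$, $E_e$, $\delta_T$ in a coordinated way so that the two constraints (membership in the ball $B([U^*],\delta_e)$ and membership in the sub-level set $\mathcal{L}_{E_e}$) mutually reinforce each other. The sub-level set part is the easier half: by Lemma \ref{Lemma: Energy decreaseOp}, whenever $s\in[0,\delta_s]$ and $\tilde U\in B(U^*,\delta_a)\cap\mathcal{M}^N$ one has $E(g(\tilde U,s))\leqslant E(\tilde U)$, so as soon as I arrange $\delta_e\leqslant \delta_a$ and $\delta_T\leqslant\delta_s$ the $\mathcal{L}_{E_e}$ membership is preserved by $\hat g$ for free.

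The ball part is the real content. I would first shrink the implicit-function-theorem neighborhoods produced by Lemma \ref{Lemma: Implicit Function TheoryOp} (they can always be replaced by smaller positive radii, since uniqueness of the implicit function persists on sub-neighborhoods) so that $\delta_a,\delta_b\leqslant\delta_c$, where $\delta_c$ is the radius on which $[U^*]$ is the unique critical point of \eqref{Equ: EnergyGrassmann}. I then fix a target radius $\delta_e\in(0,\delta_a]$ and reuse the level-set trick of Theorem \ref{Theo: ContConv}. Namely, the quantity
\[ E_0^{(e)} \;=\; \min\bigl\{E([\tilde U]) : [\tilde U]\in \overline{B([U^*],\delta_c)\setminus B([U^*],\delta_e)}\bigr\} \]
is attained on a compact set and strictly exceeds $E(U^*)$, because a minimizer in the annulus would otherwise be a critical point in $B([U^*],\delta_c)$ distinct from $[U^*]$. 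Setting $E_e=(E_0^{(e)}+E(U^*))/2$ and $\delta_T=\delta_s$ enforces the key implication: every $[W]\in\mathcal{L}_{E_e}\cap B([U^*],\delta_c)$ already lies in $B([U^*],\delta_e)$.

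What remains is bookkeeping. Given $[U]\in B([U^*],\delta_e)\cap\mathcal{L}_{E_e}$ and $s\in[0,\delta_T]$, I set $\tilde U=\arg\min_{U'\in[U]}|||U'-U^*|||$, so $\tilde U\in B(U^*,\delta_e)\cap\mathcal{M}^N\subset B(U^*,\delta_a)\cap\mathcal{M}^N$ and Lemma \ref{Lemma: Implicit Function TheoryOp} yields $g(\tilde U,s)\in B(U^*,\delta_b)\subset B(U^*,\delta_c)$, hence $[g(\tilde U,s)]\in B([U^*],\delta_c)$. Lemma \ref{Lemma: Energy decreaseOp} gives $E(g(\tilde U,s))\leqslant E(\tilde U)=E([U])\leqslant E_e$, so $[g(\tilde U,s)]\in\mathcal{L}_{E_e}$. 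The level-set implication of the previous paragraph then forces $[g(\tilde U,s)]\in B([U^*],\delta_e)$, i.e.\ $\hat g([U],s)\in B([U^*],\delta_e)\cap\mathcal{L}_{E_e}$.

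The main obstacle I anticipate is the relative calibration of the implicit-function radii $\delta_a,\delta_b$ against the critical-point radius $\delta_c$: without $\delta_b\leqslant\delta_c$ one cannot feed the image point $g(\tilde U,s)$ into the level-set implication. This is resolved by the observation that Lemma \ref{Lemma: Implicit Function TheoryOp} only asserts \emph{existence} of suitable radii, so replacing them by any smaller values is harmless. A second, more minor point is that $\hat g$ is well-defined on the quotient, which follows from the identity $|||\tilde U-U^*|||=\textup{dist}([U],[U^*])$ and the fact that $E$ is constant on equivalence classes.
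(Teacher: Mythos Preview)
Your proposal is correct and follows the same overall strategy as the paper: fix an annulus $\overline{B([U^*],\delta_c)\setminus B([U^*],\delta_e)}$, use compactness and the isolated-critical-point assumption to get an energy threshold $E_e$ strictly above $E(U^*)$, and then show that energy decrease (Lemma~\ref{Lemma: Energy decreaseOp}) traps the iterate in $B([U^*],\delta_e)\cap\mathcal{L}_{E_e}$.

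The one substantive difference is how you ensure the intermediate inclusion $[g(\tilde U,s)]\in B([U^*],\delta_c)$. You pre-shrink the implicit-function radii so that $\delta_b\leqslant\delta_c$, whence $g(\tilde U,s)\in B(U^*,\delta_b)\subset B(U^*,\delta_c)$ for free. The paper instead keeps the original $\delta_a,\delta_b$ and uses the Lipschitz bound \eqref{Equ: GradientGrassLip} on $\nabla_G E$ to estimate $\textup{dist}([g(U,s)],[U])\leqslant s\,L_1\max\{\delta_a,\delta_b\}$, which forces a possibly smaller $\delta_T=\min\bigl\{(\delta_c-\delta_e)/(L_1\max\{\delta_a,\delta_b\}),\,\delta_s\bigr\}$ in the case $\delta_b>\delta_e$. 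Your route is cleaner (no Lipschitz estimate needed at this stage), but you should state explicitly that shrinking $\delta_b$ entails shrinking $\delta_a$ and $\delta^*$ as well, by continuity of $g$ at $(U^*,0)$; the parenthetical ``uniqueness persists'' covers well-definedness but not the range condition. The paper's route has the advantage of producing an explicit formula for $\delta_T$ in terms of quantities already introduced.
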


    \begin{proof}
    We use the notation in Lemma \ref{Lemma: Implicit Function TheoryOp} and Lemma \ref{Lemma: Energy decreaseOp}. Set $\delta_e = \min\big\{\delta_a, \frac12\delta_c\big\}$ and
    \begin{equation*}
      E_{c} = \min \big\{ E(\tilde U) : [\tilde U] \in \overline{B([U^*], \delta_c)\backslash B([U^*], \delta_e)} \big\}.
    \end{equation*}
    We observe that $[\tilde U]\in B\big([U^*], \delta_e\big)$ if $E(\tilde U) \leqslant \frac{E_c + E (U^*)}{2}\equiv E_e$ and $[\tilde U]\in B\big([C^*], \delta_c\big)$.

    For $[U]\in B\big([U^*], \delta_e\big)$ and $s\in[0, \delta_s]$, we observe that there exists a $\tilde U\in [U]$ such that  $||| \tilde U - U^* ||| = \textup{dist}\big([U], [U^*]\big)\leqslant \delta_e$. For simplicity, we still use $U$ to denote $\tilde U$. We obtain from Lemma \ref{Lemma: Energy decreaseOp} that $g(U, s) \in B(U^*, \delta_b)$ and $E\big(g(U, s)\big) \leqslant E(U) \leqslant E_e$ for any fixed $s\in[0,\delta_s]$.

    Due to
    \begin{equation}
      \begin{split}
        &\textup{dist}\big([g(U, s)], [U]\big) \leqslant ||| g(U,s) - U ||| \leqslant s \Big|\Big|\Big| \mathcal{A}_{S(\frac12)}S\big(\frac12\big) \Big|\Big|\Big|,
      \end{split}
    \end{equation}
    and
    \begin{equation}
      \begin{split}
        &\Big|\Big|\Big| \mathcal{A}_{S(\frac12)}S\big(\frac12\big) \Big|\Big|\Big| = \Big|\Big|\Big| \mathcal{A}_{S(\frac12)}S\big(\frac12\big) - \mathcal{A}_{U^*}U^* \Big|\Big|\Big| \leqslant L_1\Big|\Big|\Big| S\big(\frac12\big) - U^* \Big|\Big|\Big|\leqslant L_1 \max\{\delta_a, \delta_b\},
      \end{split}
    \end{equation}
    we obtain
    $$
     \hat g\big([U], s\big) \in B\big([U^*], \delta_c\big), ~~\forall s \in [0,\delta_T],
    $$
    where
    \begin{equation}
    \delta_T =\left\{
    \begin{array}{ll}
       \min\big\{\frac{\delta_c - \delta_e}{L_1 \max\{\delta_a, \delta_b\}}, \delta_s\big\} &    \delta_b > \delta_e,\\
        \delta_s    & \delta_b \leqslant \delta_e.
    \end{array}
    \right.
    \end{equation}
     Since $E\Big(g\big([U], s\big)\Big) \leqslant E_e$, by definition of $E_e$, we have $\hat g\big([U], s\big) \in B\big([U^*], \delta_e\big)$.
    \end{proof}

    \begin{rema}
    Since
    \begin{equation*}
       g(UP, s) = g(U, s)P,~ \forall P\in\mathcal{O}^{N},
    \end{equation*}
    we may directly solve \eqref{Equ: Implicit FunctionOp} to get a representative of $\hat g(U, s)$ with respect to any representative $U$ of $[U]$.
    \end{rema}

    Consequently we arrive at the convergence of the midpoint scheme of the gradient flow based model of Kohn-Sham DFT.

    \begin{theorem}\label{Theo: MainConvergeOp}
       If $[U_{0}]\in B\big([U^*], \delta_e\big)$ and $\sup\{\Delta t_n : n \in \mathbb{N}\} \leqslant \delta_T$, then the sequence $\{U_n\}$ produced by Algorithm \ref{Alg: Midpoint} satisfies
      \begin{eqnarray}
       \lim\limits_{n\rightarrow\infty} |||\nabla_{G} E(U_{n})||| = 0,\\
       \lim\limits_{n\rightarrow\infty} E(U_{n}) = E(U^*),\\
       \lim\limits_{n\rightarrow\infty} \textup{dist}([U_{n}], [U^*]) = 0,\label{Equ: DistanceConv}
      \end{eqnarray}
      where $\delta_e$ and $\delta_T$ are defined in Lemma \ref{Lemma: Self-mappingOp}.

    \end{theorem}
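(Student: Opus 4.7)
The plan is to mirror the continuous-time argument of Theorem~\ref{Theo: ContConv} with the summation-based estimate from Lemma~\ref{Lemma: Energy decreaseOp} playing the role of the integral. First I would verify by induction on $n$ that the iterates remain in an invariant compact set: the hypothesis $[U_0]\in B([U^*],\delta_e)$ together with $\Delta t_n\leqslant \delta_T$ places us in the range of Lemma~\ref{Lemma: Self-mappingOp}, so $[U_n]\in B([U^*],\delta_e)\cap\mathcal{L}_{E_e}$ for every $n$; Proposition~\ref{Prop: OrthoOp} ensures $U_n\in\mathcal{M}^N$, so the lifted iterates live in a compact set $\mathcal{S}\subset(V_{N_g})^N\cap\mathcal{M}^N$.

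Next I would telescope the estimate in Lemma~\ref{Lemma: Energy decreaseOp} over $n$: since $E(U_n)$ is monotonically decreasing and bounded below,
\[
\sum_{n=0}^{\infty} \frac{\Delta t_n}{4N}\,\Big|\Big|\Big|\nabla_G E(U_{n+1\slash 2})\Big|\Big|\Big|^2 \;\leqslant\; E(U_0)-\lim_{n\rightarrow\infty} E(U_n) \;<\;+\infty.
\]
Combined with $\sum_n \Delta t_n=+\infty$ (forced by $t_n\rightarrow\infty$), this yields $\liminf_{n\rightarrow\infty}|||\nabla_G E(U_{n+1\slash 2})|||=0$. Invoking the Lipschitz bound \eqref{Equ: GradientGrassLip} together with the identity $|||U_n-U_{n+1\slash 2}|||=\tfrac{\Delta t_n}{2}|||\nabla_G E(U_{n+1\slash 2})|||$ lets me transfer this to a subsequence $\{U_{n_k}\}\subset\mathcal{S}$ with $|||\nabla_G E(U_{n_k})|||\rightarrow 0$. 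By compactness of $\mathcal{S}$ I would extract $U_{n_k}\rightarrow \hat U\in\mathcal{S}$; continuity of $\nabla_G E$ then gives $\nabla_G E(\hat U)=0$, and the uniqueness hypothesis forces $[\hat U]=[U^*]$. Monotonicity of $E(U_n)$ then upgrades $E(U_{n_k})\rightarrow E(U^*)$ to the full limit $\lim_{n\rightarrow\infty} E(U_n)=E(U^*)$.

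For \eqref{Equ: DistanceConv} and the full gradient limit, I would argue by contradiction exactly as in Theorem~\ref{Theo: ContConv}. If $\textup{dist}([U_n],[U^*])\not\rightarrow 0$, extract a subsequence bounded away from $[U^*]$; by compactness of $\mathcal{S}$ a sub-subsequence converges to some $\bar U\in\mathcal{S}$ with $E(\bar U)=E(U^*)$ and $\textup{dist}([\bar U],[U^*])\geqslant \hat\delta>0$. The isolation of $[U^*]$ as the unique critical point in $B([U^*],\delta_c)$ forces $E$ to exceed $E(U^*)$ strictly on any closed annulus around $[U^*]$, producing a contradiction; the analogous argument upgrades the liminf of the gradient to a full limit. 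The main obstacle is precisely this passage from the weighted series to pointwise decay: with no uniform positive lower bound on $\Delta t_n$ the telescoping estimate only yields a $\liminf$, and pointwise convergence of the gradient must be recovered via compactness together with continuity of $\nabla_G E$ and uniqueness of the critical point rather than directly from the energy bound. Careful bookkeeping of the radii $\delta_a,\delta_b,\delta_c,\delta_e$ so that $\hat U$ lands inside $B([U^*],\delta_c)$ where uniqueness applies, and so that \eqref{Equ: GradientGrassLip} is legitimately invoked on $\mathcal{S}$, is the central technical point.
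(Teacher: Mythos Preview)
Your proposal is correct and follows essentially the same route as the paper's proof: invoke Lemma~\ref{Lemma: Self-mappingOp} to trap the iterates in a compact invariant set, telescope the energy-decrease estimate of Lemma~\ref{Lemma: Energy decreaseOp} to obtain $\liminf_n|||\nabla_G E(U_{n+1/2})|||=0$, pass to a convergent subsequence whose limit is a critical point, invoke uniqueness to identify it with $[U^*]$, and finish the distance and gradient limits by a compactness--contradiction argument.

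Two minor differences are worth noting. First, to transfer the gradient decay from the midpoints $U_{n+1/2}$ to the integer iterates $U_n$ you invoke the Lipschitz bound \eqref{Equ: GradientGrassLip}; the paper instead shows $U_{n_k+1/2}\to\bar U$ along the same subsequence as $U_{n_k}$ and uses continuity of $\nabla_G E$ at the limit. Your route is slightly more direct but, as you correctly flag, requires rotating the iterates into $B(U^*,\max\{\delta_a,\delta_b\})$ before applying \eqref{Equ: GradientGrassLip}; this is harmless since $\nabla_G E(UP)=\nabla_G E(U)P$ for $P\in\mathcal{O}^N$. Second, your treatment of the full limit $|||\nabla_G E(U_n)|||\to 0$ (``the analogous argument'') is vaguer than the paper's: once \eqref{Equ: DistanceConv} is established, the paper simply chooses $P_n\in\mathcal{O}^N$ realising $\textup{dist}([U_n],[U^*])$, so that $U_nP_n\to U^*$ and $|||\nabla_G E(U_n)|||=|||\nabla_G E(U_nP_n)|||\to 0$ by continuity. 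This one-line argument is cleaner than a separate contradiction step and you may wish to adopt it.
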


    \begin{proof}
    We see from Lemma \ref{Lemma: Energy decreaseOp} that $E(U_{n + 1}) \leqslant E(U_{n})$. Since $B\big([U^*], \delta_e\big)\bigcap\mathcal{L}_{E_e}$ is compact, we obtain from \ref{Lemma: Self-mappingOp} that $\{ E\big([U_{n}]\big)\}_{n = 0}^{\infty}$ is bounded below. So $\lim\limits_{n\rightarrow\infty} E\big([U_{n}]\big)$ exists. Note that \eqref{Equ: Energy DifOp} implies
      \begin{equation}
      \begin{split}
        &E(U_{n}) - E(U_{n + 1}) \geqslant \frac{\Delta t_n}{4N} |||\nabla_G E(U_{n + 1\slash2}) |||^2,
      \end{split}
    \end{equation}
    we have
    \begin{equation*}
    \begin{split}
      &\sum\limits_{n = 0}^\infty \frac{\Delta t_n}{4N}||| \nabla_G E(U_{n + 1\slash2}) |||^2 \leqslant E(U_{0}) - \lim\limits_{n\rightarrow\infty} E(U_{n}) < +\infty,
    \end{split}
    \end{equation*}
    which together with $\sum\limits_{n = 0}^{\infty} \Delta t_n = +\infty$ leads to
    \begin{equation*}
      \inf\big\{ ||| \nabla_G E(U_{k + 1\slash2}) ||| : k\in\mathbb{N}, k\geqslant n\big\} = 0,~\forall n \in\mathbb{N}.
    \end{equation*}
    Therefore
    \begin{equation}
    \begin{split}
      &\liminf\limits_{n\rightarrow\infty} ||| \nabla_G E(U_{n + 1\slash2}) ||| = 0.
    \end{split}
    \end{equation}
    Consequently, there exists a subsequence $\{U_{n_{k + 1\slash2}}\}_{k = 0}^{\infty}$ such that
    \begin{equation*}
      \lim\limits_{k\rightarrow\infty} |||  U_{n_{k + 1}} - U_{n_{k}} ||| \leqslant \delta_T \lim\limits_{k\rightarrow\infty} ||| \nabla_G E(U_{n_{k + 1\slash2}}) ||| = 0.
    \end{equation*}
    Note that
    \begin{equation*}
    \mathcal{\hat S}\equiv \big\{U\in(V_{N_g})^N:[U]\in B\big([U^*], \delta_e\big)\bigcap \mathcal{L}_{E_e}\big\}
    \end{equation*}
    is compact, we have a subsequence of $\{U_{n_{k}}\}_{k = 0}^{\infty}$, for simplicity, we write as $\{U_{n_{k}}\}_{k = 0}^{\infty}$, satisfying
    \begin{equation}
      \lim\limits_{k\rightarrow\infty} U_{n_k} = \bar U
    \end{equation}
    for some $\bar U \in\mathcal{\hat S}$. Then
    \begin{equation*}
        \lim\limits_{k\rightarrow\infty} U_{n_{k + 1\slash 2}} = \lim\limits_{k\rightarrow\infty} U_{n_k} + \frac{U_{n_{k + 1}} - U_{n_k}}{2} = \bar U.
    \end{equation*}
    According to the Proposition \ref{Prop: GradientFlowOp} and Lemma \ref{Lemma: Energy decreaseOp}, we have
    \begin{equation*}
    \nabla_{G} E(\bar U) = \mathcal{A}_{\bar U}\bar U = 0.
    \end{equation*}
    This means $\liminf\limits_{n\rightarrow\infty} ||| \nabla_{G} E(U_{n})||| = 0$.

    Lemma \ref{Lemma: Self-mappingOp} tells us that $[\bar U]\in B\big([U^*], \delta_e\big)\bigcap\mathcal{L}_{E_e}\subset B\big([U^*], \delta_c\big)$.
    Due to the uniqueness of the critical point in $B\big([C^*], \delta_c\big)$, we have $[\bar U] = [U^*]$ and
    \begin{equation*}
        \lim\limits_{n\rightarrow\infty}E\big(U_{n}\big) = \lim\limits_{k\rightarrow\infty} E\big(U_{n_k}\big) = E(U^*).
    \end{equation*}

    Next we show that $\lim\limits_{n\to\infty}\textup{dist}\big([U_{n}], [U^*]\big) = 0$. If it is not true, then there exists a subsequence $\{U_{n_l}\}_{l = 0}^{\infty}$ and $\check\delta > 0$ that $\textup{dist}\big([U_{n_l}], [U^*]\big)\geqslant \check\delta$. Since $\mathcal{\hat S}$ is compact, there exists a subsequence of $\{U_{n_l}\}_{l = 0}^{\infty}$, for simplicity again written as $\{U_{n_l}\}_{l = 0}^{\infty}$, which satisfies $\lim\limits_{l\rightarrow\infty} U_{n_l} = \check U$ for some $\check U\in\mathcal{\hat S}$. Thus we have
    \begin{equation*}
        E(\check U) = \lim\limits_{l\rightarrow\infty} E(U_{n_l}) = E(U^*).
    \end{equation*}
    Again by the uniqueness of local minimizer in $B\big([U^*], \delta_e\big)$, we obtain $[\check U] = [U^*]$, which contradicts the assumption $\textup{dist}([U_{n_l}], [U^*]) \geqslant \check\delta$.

    Clearly there exists $P_n\in\mathcal{O}^N$ that
    \begin{equation}
      |||U_nP_n - U^*|||= \textup{dist}([U_n], [U^*]),
    \end{equation}
    then
    \begin{equation*}
       \lim\limits_{n\rightarrow\infty} \big|\big|\big| \nabla_{G} E(U_n) \big|\big|\big| =  \lim\limits_{n\rightarrow\infty} \big|\big|\big| \nabla_{G} E(U_nP_n) \big|\big|\big| = \big|\big|\big| \nabla_{G} E(U^*) \big|\big|\big| = 0.
    \end{equation*}
    This completes the proof.
    \end{proof}

    Theorem \ref{Theo: MainConvergeOp} shows that the approximations produced by Algorithm \ref{Alg: Midpoint} converge to the unique local minimizer under some mild assumptions, in which no uniform gap between the required  and nonrequired eigenvalues, or namely \emph{uniformly well posed} (UWP) property in \cite{UWP_Bris, liu2014convergence, liu2015analysis, UWP_Yang}, is needed.

    \subsection{Convergence rate}
    We are able to have some convergence rate of the approximations obtained from Algorithm \ref{Alg: Midpoint}.

    \begin{lemma}\label{Lemma: HessAppr}
      For $U\in B(U^*, \min\{\delta_3, \delta_a\})\bigcap\mathcal{M}^N$ and $\tau\in(0, \delta_T]$, set
      \begin{equation}
        \begin{split}
          U_+ = \Big(I + \frac{\tau}{2}\mathcal{A}_{U_+}\Big)^{-1}U,\\
          U_- = \Big(I - \frac{\tau}{2}\mathcal{A}_{U_-}\Big)^{-1}U.\\
        \end{split}
      \end{equation}
      If \eqref{Equ: Hess} holds true, then there exists some $\delta_{r_1} > 0$ such that
      \begin{equation}
      \begin{split}
        \textup{tr}\Big( \big\langle \big(U_{+} - U_{-}\big)^\top \big(\nabla_G E(U_{+}) - \nabla_G E(U_{-})\big) \big\rangle\Big) \geqslant \frac{\sigma}{2} ||| U_{+} - U_{-}|||^2
      \end{split}
      \end{equation}
      for all $\tau\in(0,\delta_{r_1}]$ and $U\in B(U^*, \min\{\delta_3, \delta_a\})\bigcap\mathcal{M}^N$, where $\delta_{r_1}\in(0,\delta_T]$ is a positive constant, $\delta_T$ is defined in Theorem \ref{Theo: MainConvergeOp} and $\delta_a$ is defined in Lemma \ref{Lemma: Implicit Function TheoryOp}.
    \end{lemma}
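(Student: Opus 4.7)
The plan is to reduce the stated inequality to a direct application of the Hessian positivity assumption~\eqref{Equ: Hess}, modulo error terms of order $\tau$ that can be absorbed into the gap between $\sigma$ and $\sigma/2$. First I would establish well-posedness and smallness: the equation for $U_+$ is exactly the midpoint relation solved in Lemma~\ref{Lemma: Implicit Function TheoryOp} with parameter $\tau$, while $U_-$ is the same with $-\tau$. The implicit function argument there, together with Lemma~\ref{Lemma: OrthoOp}, shows that for $\tau$ small both $U_\pm$ exist uniquely, lie on $\mathcal{M}^N$ (so $\langle U_\pm^\top U_\pm\rangle = I_N$), lie in $B(U^*, \min\{\delta_3, \delta_a\})$, and satisfy $|||U_\pm - U||| \leqslant C\tau$, hence $|||U_+ - U_-||| \leqslant C\tau$.

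Next I would express the gradient difference as a line integral along the segment $U_s := (1-s)U_- + sU_+$, $s\in[0,1]$:
$$\nabla_G E(U_+) - \nabla_G E(U_-) = \int_0^1 \frac{d}{ds}\nabla_G E(U_s)\,ds.$$
Differentiating the definition~\eqref{Equ:GrassmannGradient}, the integrand produces a leading term of the form $\nabla^2 E(U_s)[U_+ - U_-]\langle U_s^\top U_s\rangle$ plus lower-order terms coming from derivatives of $\langle U_s^\top U_s\rangle$ and of $\langle \nabla E(U_s), U_s\rangle^\top$. Pairing with $U_+ - U_-$ and comparing to the explicit formula for $\textup{Hess}_G E$, one obtains
$$\textup{tr}\big\langle (U_+ - U_-)^\top (\nabla_G E(U_+) - \nabla_G E(U_-))\big\rangle = \textup{Hess}_G E(U^*)[U_+ - U_-, U_+ - U_-] + R,$$
where the remainder $R$ records (i) the replacement of $\nabla^2 E(U_s)$ by $\nabla^2 E(U^*)$, (ii) the replacement of $\langle U_s^\top U_s\rangle$ by $I_N$, and (iii) the replacement of $U_s$ by $U^*$ in the quadratic $U\langle U^\top\nabla E(U)\rangle$ term. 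Using the local Lipschitz continuity of $\nabla E$ from \eqref{Equ: Local LipschitzOp}, local boundedness of $\nabla^2 E$, and the bound $|||U_s - U^*||| \leqslant \min\{\delta_3,\delta_a\} + C\tau$, each contribution yields $|R| \leqslant C_1 \tau\, |||U_+ - U_-|||^2$.

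The main technical obstacle is the third step, the tangent-space adjustment, since~\eqref{Equ: Hess} applies only to $D\in\mathcal{T}_{[U]}\mathcal{G}^N$, i.e.~$\langle D^\top U\rangle = 0$, whereas $U_+ - U_-$ need not satisfy this. From $U_\pm = U \mp \tfrac{\tau}{2}\mathcal{A}_{U_\pm}U_\pm$, the skew-symmetry of $\mathcal{A}_{U_\pm}$, and the orthogonality $\langle U^\top U\rangle = \langle U_\pm^\top U_\pm\rangle = I_N$, a direct expansion gives
$$\big\Vert\langle (U_+ - U_-)^\top U\rangle\big\Vert \leqslant C_2\tau\,|||U_+ - U_-|||,$$
so the component of $U_+ - U_-$ normal to $\mathcal{T}_{[U]}\mathcal{G}^N$ has norm at most $C_2\tau |||U_+ - U_-|||$. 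Decomposing $U_+ - U_- = D + N$ with $D$ tangent at $U$ and $|||N||| \leqslant C_2\tau |||U_+ - U_-|||$, applying~\eqref{Equ: Hess} to $D$ and using boundedness of $\nabla^2 E$ on the cross and normal terms, one obtains
$$\textup{Hess}_G E(U^*)[U_+ - U_-, U_+ - U_-] \geqslant \sigma\,|||U_+ - U_-|||^2 - C_3\tau\,|||U_+ - U_-|||^2.$$

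Combining these estimates yields $\textup{tr}\langle(U_+ - U_-)^\top(\nabla_G E(U_+) - \nabla_G E(U_-))\rangle \geqslant (\sigma - (C_1+C_3)\tau)|||U_+ - U_-|||^2$, so setting $\delta_{r_1} := \min\{\delta_T,\, \sigma/(2(C_1+C_3))\}$ gives the claim. The delicate point is the bookkeeping: every error term must scale as $\tau\,|||U_+ - U_-|||^2$ rather than just $|||U_+ - U_-|||^2$, so that smallness of $\tau$ alone (and not of $|||U_+ - U_-|||$, which could itself be of order $\tau$) is enough to absorb the errors into the gap $\sigma - \sigma/2$.
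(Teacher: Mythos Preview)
Your approach has a genuine gap in the remainder estimate. You center the expansion at $U^*$ and claim $|R|\leqslant C_1\tau\,|||U_+ - U_-|||^2$, citing the bound $|||U_s - U^*|||\leqslant \min\{\delta_3,\delta_a\}+C\tau$. But that bound is $O(1)$, not $O(\tau)$: the point $U$ may sit anywhere in $B(U^*,\min\{\delta_3,\delta_a\})$, so replacing $\nabla^2 E(U_s)$ by $\nabla^2 E(U^*)$ incurs an error of order $\min\{\delta_3,\delta_a\}\cdot|||U_+-U_-|||^2$ that does \emph{not} shrink with $\tau$ and hence cannot be absorbed into the gap $\sigma-\sigma/2$. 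Centering instead at $U$ would make $|||U_s-U|||=O(\tau)$, but then controlling $\nabla^2 E(U_s)-\nabla^2 E(U)$ by $O(\tau)$ requires $\nabla^2 E$ to be Lipschitz, which is strictly stronger than the standing hypothesis that only $\nabla E$ is Lipschitz. There is also an inconsistency: you decompose $U_+-U_-$ into tangent and normal parts at $U$, yet apply~\eqref{Equ: Hess} via $\textup{Hess}_G E(U^*)$; the coercivity in~\eqref{Equ: Hess} demands $D\in\mathcal{T}_{[U]}\mathcal{G}^N$ at the \emph{same} base point where the Hessian is evaluated. A smaller factual slip: $U_\pm$ are the midpoints, not the full Cayley steps, so Lemma~\ref{Lemma: OrthoOp} does not apply to them; by Proposition~\ref{Prop: Inner PropertyOp} one only has $\sigma(\langle U_\pm^\top U_\pm\rangle)\subset[0,1]$, not $\langle U_\pm^\top U_\pm\rangle=I_N$.

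The paper avoids all of this. It first uses the algebraic identity $U_+-U_-=-\tfrac{\tau}{2}\bigl(\nabla_G E(U_+)+\nabla_G E(U_-)\bigr)$ to rewrite the left-hand side, then takes a pointwise limit: for each fixed $U$, as $\tau\to 0$ the ratio tends to $\textup{Hess}_G E(U)[\nabla_G E(U),\nabla_G E(U)]/|||\nabla_G E(U)|||^2\geqslant\sigma$, where $\nabla_G E(U)$ is automatically tangent at $U$, so no tangent/normal splitting is needed. This gives a threshold $\delta_U$ for each $U$, and a finite-subcover compactness argument over $B(U^*,\min\{\delta_3,\delta_a\})\cap\mathcal{M}^N$ then yields the uniform $\delta_{r_1}$. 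The price is that $\delta_{r_1}$ is non-explicit; your route would give a formula, but only under the additional second-order regularity needed to close the remainder bound.
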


    \begin{proof}
      Note that
      \begin{equation}\label{Equ: Explain0}
      \begin{split}
        &\textup{tr}\Big( \big\langle \big(U_{+} - U_{-}\big)^\top \big(\nabla_G E(U_{+}) - \nabla_G E(U_{-})\big) \big\rangle\Big)\\
        =&\frac{\tau}{2}\textup{tr}\Big( \big\langle \big(\nabla_G E(U_{+}) + \nabla_G E(U_{-}) \big)^\top \big(\nabla_G E(U_{+}) - \nabla_G E(U_{-}) \big) \big\rangle \Big).
      \end{split}
      \end{equation}
      Since
      \begin{equation}
        \begin{split}
            &\lim\limits_{\tau \to 0}\frac{\textup{tr}\Big( \big\langle \big( \nabla_G E(U_{+})  \big)^\top \big(\nabla_G E(U_{+}) - \nabla_G E(U_{-}) \big) \big\rangle \Big)}{\tau}\\
            =& \textup{Hess}_G E(U)[\nabla E_G(U), \nabla E_G(U)] \geqslant \sigma ||| \nabla E_G(U) |||^2 = \lim\limits_{\tau \to 0} \frac{\sigma|||U_{+} - U_{-}|||^2}{\tau^2},
        \end{split}
      \end{equation}
      we have
      \begin{equation}\label{Equ: Explain1}
      \begin{split}
        \lim\limits_{\tau\to 0} \tau \frac{\textup{tr} \Big( \big\langle \big( \nabla_G E(U_{+})  \big)^\top \big(\nabla_G E(U_{+}) - \nabla_G E(U_{-}) \big) \big\rangle \Big) } {|||U_{+} - U_{-}|||^2} \geqslant \sigma.
      \end{split}
      \end{equation}
      Similarly,
      \begin{equation}\label{Equ: Explain2}
      \begin{split}
        \lim\limits_{\tau\to 0} \tau \frac{\textup{tr} \Big( \big\langle \big( \nabla_G E(U_{-})  \big)^\top \big(\nabla_G E(U_{+}) - \nabla_G E(U_{-}) \big) \big\rangle \Big) } {|||U_{+} - U_{-}|||^2} \geqslant \sigma.
      \end{split}
      \end{equation}
      Therefore, we see from \eqref{Equ: Explain0}, \eqref{Equ: Explain1} and \eqref{Equ: Explain2} that
      \begin{equation}
      \begin{split}
        \lim\limits_{\tau\to 0} \frac{\textup{tr}\Big( \big\langle \big(U_{+} - U_{-}\big)^\top \big(\nabla_G E(U_{+}) - \nabla_G E(U_{-})\big) \big\rangle\Big)}{|||U_{+} - U_{-}|||^2}\geqslant\sigma.
      \end{split}
      \end{equation}
    Then we know for any $U\in B(U^*, \min\{\delta_3, \delta_a\})\bigcap\mathcal{M}^N$, there exists a $\delta_U > 0$ that
    \begin{equation}
      \begin{split}
        \frac{\textup{tr}\Big( \big\langle \big(U_{+} - U_{-}\big)^\top \big(\nabla_G E(U_{+}) - \nabla_G E(U_{-})\big) \big\rangle\Big)}{|||U_{+} - U_{-}|||^2} >  \frac{\sigma}{2}
      \end{split}
      \end{equation}
    for all $\tau\in(0, \delta_U]$. We denote
    \begin{equation}
    \begin{split}
      \mathcal{C}_U := \bigg\{ V: \frac{\textup{tr}\Big( \big\langle \big(V_{+} - V_{-}\big)^\top \big(\nabla_G E(V_{+}) - \nabla_G E(V_{-})\big) \big\rangle\Big)}{|||V_{+} - V_{-}|||^2}> \frac{\sigma}{2}, \\
      \forall \tau\in(0, \delta_U]\bigg\},
    \end{split}
    \end{equation}
    then we have $\mathcal{C}_U\neq\varnothing$ since $U\in\mathcal{C}_U$. Due to
    \[B(U^*, \min\{\delta_3, \delta_a\})\bigcap\mathcal{M}^N\subset\bigcup\limits_{U\in B(U^*, \min\{\delta_3, \delta_a\})\bigcap\mathcal{M}^N} \mathcal{C}_U\]
    and the compactness of $B(U^*, \min\{\delta_3, \delta_a\})\bigcap\mathcal{M}^N$, we know there exist finite sets $\mathcal{C}_{U_{(l)}}$ that
    \[B(U^*, \min\{\delta_3, \delta_a\})\bigcap\mathcal{M}^N\subset\bigcup\limits_{l = 1}^\ell \mathcal{C}_{U_{(l)}}.\]
    Set
    \[\delta_{r_1} = \min\{\delta_{U_{(1)}}, \delta_{U_{(2)}}, \ldots,\delta_{U_{(\ell)}}, \delta_T \},\]
    and we complete the proof.
    \end{proof}

    \begin{lemma}\label{Lemma: GradientConvEst}
    For $U\in B(U^*, \min\{\delta_a, \delta_b\})\bigcap\mathcal{M}^N$ and $\tau\in(0, \delta_T]$, if we set
      \begin{equation}
        \begin{split}
          U_+ &= \Big(I + \frac{\tau}{2}\mathcal{A}_{U_+}\Big)^{-1}U,\\
          \bar {U}_{+} &= 2U_+ - U,\\
        \end{split}
      \end{equation}
      then there exists some $\delta_{r_2} > 0$ that satisfies
      \begin{equation}
        E(U) - E(\bar{U}_{+}) \leqslant \frac{\tau(L + 3)}{2} ||| \nabla_G E(U_{+})|||^2,
      \end{equation}
      for all $\tau\in(0,\delta_{r_2}]$ and $U\in B(U^*, \min\{\delta_a, \delta_b\})\bigcap\mathcal{M}^N$, where $\delta_{r_2}\in(0,\delta_T]$ is a positive constant, $\delta_T$ is defined in Theorem \ref{Theo: MainConvergeOp} and $\delta_a$, $\delta_b$ and $L$ are defined in Lemma \ref{Lemma: Energy decreaseOp}.
    \end{lemma}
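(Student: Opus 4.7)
The plan is to apply a mean value theorem along the chord joining $U$ and $\bar U_+$, then split the resulting quantity into a dominant term evaluated at $U_+$ plus a Lipschitz remainder. From the midpoint relation $(I+\tfrac{\tau}{2}\mathcal A_{U_+})U_+ = U$ one first records $\bar U_+ - U = 2(U_+ - U) = -\tau\,\nabla_G E(U_+)$ and notes $\bar U_+ \in \mathcal M^N$ by Lemma \ref{Lemma: OrthoOp}. Setting $S(t) = t\bar U_+ + (1-t)U$ and applying the mean value theorem to $t\mapsto E(S(t))$ on $(0,1)$ yields some $\xi\in(0,1)$ with
\[
E(U) - E(\bar U_+) \;=\; \tau\,\textup{tr}\big\langle \nabla E(S(\xi))^\top\,\nabla_G E(U_+)\big\rangle.
\]

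I would then add and subtract $\nabla E(U_+)$ inside the trace. The Lipschitz remainder is cheap: $S(\xi) - U_+ = -(2\xi-1)\tfrac{\tau}{2}\nabla_G E(U_+)$ gives $|||S(\xi) - U_+||| \leq \tfrac{\tau}{2}|||\nabla_G E(U_+)|||$, so the local Lipschitz hypothesis \eqref{Equ: Local LipschitzOp} together with Cauchy--Schwarz produces a contribution of at most $\tfrac{\tau^2 L}{2}|||\nabla_G E(U_+)|||^2$. The crux is therefore to establish the main-term bound $\textup{tr}\langle\nabla E(U_+)^\top\nabla_G E(U_+)\rangle \leq \tfrac{3}{2}|||\nabla_G E(U_+)|||^2$ for sufficiently small $\tau$.

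For this, the key observation is the exact cancellation
\[
I_N - \langle U_+^\top U_+\rangle \;=\; \frac{\tau^2}{4}\,\langle \nabla_G E(U_+)^\top\,\nabla_G E(U_+)\rangle,
\]
which follows from $\langle \bar U_+^\top \bar U_+\rangle = I_N$ and $\bar U_+ = U - \tau\,\nabla_G E(U_+)$: expanding the first relation yields $\langle U^\top \nabla_G E(U_+)\rangle + \langle \nabla_G E(U_+)^\top U\rangle = \tau\,\langle \nabla_G E(U_+)^\top \nabla_G E(U_+)\rangle$, and feeding this into the expansion of $\langle U_+^\top U_+\rangle = \langle(U - \tfrac{\tau}{2}\nabla_G E(U_+))^\top(U - \tfrac{\tau}{2}\nabla_G E(U_+))\rangle$ produces the identity. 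Using the symmetry \eqref{Equ: GradientSym} and the definition \eqref{Equ:GrassmannGradient} of $\nabla_G E$, the difference $\textup{tr}\langle \nabla E(U_+)^\top \nabla_G E(U_+)\rangle - |||\nabla_G E(U_+)|||^2$ can be rewritten as $\textup{tr}\big((I_N - \langle U_+^\top U_+\rangle)\,Y\big)$, where $Y$ is a polynomial expression in $\langle \nabla E(U_+)^\top \nabla E(U_+)\rangle$, $\langle U_+^\top U_+\rangle$, and $\langle U_+^\top \nabla E(U_+)\rangle$. All three matrices are uniformly bounded on $B(U^*,\max\{\delta_a,\delta_b\})\cap \mathcal M^N$ (for the first, use \eqref{Equ: localBoundedOp}), so combined with the displayed identity the discrepancy is at most a constant times $\tau^2\,|||\nabla_G E(U_+)|||^2$, which drops below $\tfrac12|||\nabla_G E(U_+)|||^2$ once $\tau$ is small.

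Assembling the two estimates gives $E(U) - E(\bar U_+) \leq \tfrac{\tau(3 + \tau L)}{2}|||\nabla_G E(U_+)|||^2$, and choosing $\delta_{r_2}\in(0,\delta_T]$ so that $\tau\leq 1$ and the main-term discrepancy is controlled as above yields the stated bound $\tfrac{\tau(L+3)}{2}|||\nabla_G E(U_+)|||^2$. The principal obstacle is the main-term estimate: a naive Cauchy--Schwarz bound on $\textup{tr}\langle \nabla E(U_+)^\top\nabla_G E(U_+)\rangle$ would only yield $|||\nabla E(U_+)|||\cdot|||\nabla_G E(U_+)|||$, one power of $|||\nabla_G E(U_+)|||$ short of what is required. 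The quadratic cancellation in the identity for $I_N - \langle U_+^\top U_+\rangle$ is precisely what restores the missing power, and it hinges on both endpoints $U$ and $\bar U_+$ of the midpoint step lying on $\mathcal M^N$, a property provided by Lemma \ref{Lemma: OrthoOp}.
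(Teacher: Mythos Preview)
Your argument is correct and follows the paper's opening moves exactly: the mean-value step along $S(t)=t\bar U_++(1-t)U$ and the Lipschitz remainder bounded by $\tfrac{\tau^2 L}{2}|||\nabla_G E(U_+)|||^2$ are precisely what the paper obtains by citing \eqref{Equ: EstimateOp} and \eqref{Equ: Tmp2Op}. The divergence is in how the main term $\textup{tr}\langle\nabla E(U_+)^\top\nabla_G E(U_+)\rangle$ is controlled. The paper does not compute anything: it simply observes that as $\tau\to 0$ one has $U_+\to U\in\mathcal M^N$, whence the ratio $\textup{tr}\langle\nabla E(U_+)^\top\nabla_G E(U_+)\rangle\,/\,|||\nabla_G E(U_+)|||^2$ tends to $\textup{tr}\langle\nabla E(U)^\top\nabla_G E(U)\rangle\,/\,|||\nabla_G E(U)|||^2=1$ (the identity for points on $\mathcal M^N$ already used in Proposition~\ref{Prop: GradientFlowOp}); a finite-subcover argument over the compact set then upgrades the pointwise threshold to a uniform $\delta_{r_2}$. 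Your route is instead fully quantitative: the identity $I_N-\langle U_+^\top U_+\rangle=\tfrac{\tau^2}{4}\langle G^\top G\rangle$ (with $G=\nabla_G E(U_+)$) together with the algebraic rewriting $\textup{tr}\langle\nabla E(U_+)^\top G\rangle-|||G|||^2=\textup{tr}\big((I_N-\langle U_+^\top U_+\rangle)(AC-B^2)\big)$, where $A=\langle U_+^\top U_+\rangle$, $B=\langle U_+^\top\nabla E(U_+)\rangle$, $C=\langle\nabla E(U_+)^\top\nabla E(U_+)\rangle$, gives an explicit $O(\tau^2|||G|||^2)$ bound with a constant read off from the uniform bounds on $A,B,C$. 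This buys you an effective $\delta_{r_2}$ without compactness and sidesteps the $0/0$ issue the paper's ratio argument has at $U=U^*$; the paper's version is shorter to write but less constructive. One small slip: you say the three matrices are bounded on $B(U^*,\max\{\delta_a,\delta_b\})\cap\mathcal M^N$, but $U_+\notin\mathcal M^N$ in general---drop the ``$\cap\,\mathcal M^N$'', since the bounds you cite (\eqref{Equ: localBoundedOp} and Proposition~\ref{Prop: Inner PropertyOp}) hold on the full ball anyway.
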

    \begin{proof}
      We see from \eqref{Equ: EstimateOp} and \eqref{Equ: Tmp2Op} that
      \begin{equation}
        E(U) - E(\bar{U}_{+}) \leqslant \tau \textup{tr}\langle \nabla E(U_{+})^\top \nabla_G E(U_{+})\rangle + \frac{\tau L}{2} ||| \nabla_G E(U_{+})|||^2.
      \end{equation}
      Note that
      \begin{equation}
      \begin{split}
        \lim\limits_{\tau \to 0} \frac{\textup{tr}\langle \nabla E(U_{+})^\top \nabla_G E(U_{+})\rangle}{|||\nabla_G E(U_{+})|||^2} =
         \frac{\textup{tr}\langle \nabla E(U)^\top \nabla_G E(U)\rangle}{|||\nabla_G E(U)|||^2} = 1.
      \end{split}
      \end{equation}
    Then we see that for any $U\in B(U^*, \min\{\delta_a, \delta_b\})\bigcap\mathcal{M}^N$, there exists a $\hat\delta_U > 0$ that
    \begin{equation}
      \begin{split}
       \frac{\textup{tr}\langle \nabla E(U_{+})^\top \nabla_G E(U_{+})\rangle}{|||\nabla_G E(U_{+})|||^2} < \frac{3}{2}
      \end{split}
      \end{equation}
    for all $\tau\in(0, \hat\delta_U]$. We denote
    \begin{equation}
    \begin{split}
      \hat{\mathcal{C}}_U := \bigg\{ V: \frac{\textup{tr}\langle \nabla E(V_{+})^\top \nabla_G E(V_{+})\rangle}{|||\nabla_G E(V_{+})|||^2} <  \frac{3}{2}, \forall \tau\in(0, \hat\delta_U]\bigg\},
    \end{split}
    \end{equation}
    then we have $\hat{\mathcal{C}}_U\neq\varnothing$ since $U\in\hat{\mathcal{C}}_U$. Due to
    \[B(U^*, \min\{\delta_a, \delta_b\})\bigcap\mathcal{M}^N\subset\bigcup\limits_{U\in B(U^*, \min\{\delta_a, \delta_b\})\bigcap\mathcal{M}^N} \hat{\mathcal{C}}_U\]
    and the compactness of $B(U^*, \min\{\delta_a, \delta_b\})\bigcap\mathcal{M}^N$, we note that there exist finite sets $\hat{\mathcal{C}}_{U_{(l)}}$ that
    \[B(U^*, \min\{\delta_a, \delta_b\})\bigcap\mathcal{M}^N\subset\bigcup\limits_{l = 1}^{\hat\ell} \hat{\mathcal{C}}_{U_{(l)}}.\]
    Set
    \[\delta_{r_2} = \min\{\hat\delta_{U_{(1)}}, \hat\delta_{U_{(2)}}, \ldots,\hat\delta_{U_{(\hat\ell)}}, \delta_T \},\]
    and we arrive at the conclusion.
    \end{proof}

    \begin{theorem}\label{Theo: ConvRate}
      Suppose Hessian coercivity holds true as \eqref{Equ: Hess}. If $[U_{0}]\in B\big([U^*], \delta_e\big)$ and $\Delta t_n = \tau \leqslant \delta_{r_1}, \forall n\geqslant N_0$, then the sequence $\{U_n\}$ produced by Algorithm \ref{Alg: Midpoint} satisfies
      \begin{equation}
      \begin{split}
        |||\nabla_G E(U_{n})||| \leqslant \Big(1 + \frac{L_1\tau}{2}\Big) \Big(\frac{4 + \tau^2L_1^2 - 2\sigma \tau}{4 + \tau^2L_1^2 + 2\sigma \tau}\Big)^{(n - N_0 + 1)\slash 2} |||\nabla_G E(U_{N_0 - 1\slash2})|||, \\
        \forall n \geqslant N_0,
      \end{split}
      \end{equation}
      where $N_0$ is a positive integer, $\delta_e$ and $\delta_T$ are defined in Lemma \ref{Lemma: Self-mappingOp}, $L_1$ is defined in \eqref{Equ: GradientGrassLip} and $\delta_{r_1}$ is defined in Lemma \ref{Lemma: HessAppr}.

      Moreover, if $\Delta t_n = \tau \leqslant \min\{\delta_{r_1}, \delta_{r_2}\}$, $\forall n\geqslant N_1$, then
      \begin{equation}
      \begin{split}
      &E(U_n) - E(U^*)\\
      \leqslant& \frac{(L + 3)(4 + \tau^2L_1^2 + 2\sigma \tau)}{8\sigma}  \Big(\frac{4 + \tau^2L_1^2 - 2\sigma \tau}{4 + \tau^2L_1^2 + 2\sigma \tau}\Big)^{n - N_1 + 1} |||\nabla_G E(U_{N_1 - 1\slash2})|||^2, \forall n \geqslant N_1,
      \end{split}
      \end{equation}
      where $N_1\geqslant N_0$ is a positive integer, $L$ is defined in \eqref{Equ: Local LipschitzOp} and $\delta_{r_2}$ is defined in Lemma \ref{Lemma: GradientConvEst}.
    \end{theorem}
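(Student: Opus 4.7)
The plan is to exploit the Hessian-type coercivity supplied by Lemma \ref{Lemma: HessAppr} to derive a geometric contraction for the midpoint gradients $|||\nabla_G E(U_{n+1/2})|||$, then to transfer this decay first to $|||\nabla_G E(U_n)|||$ via Lipschitz continuity, and finally to $E(U_n)-E(U^*)$ via a telescoping sum. First, by Theorem \ref{Theo: MainConvergeOp}, I would pick $N_0$ large enough so that for all $n\geqslant N_0$ the iterates $U_n$ (with an appropriate choice of representative in $[U_n]$) lie in $B(U^*,\min\{\delta_3,\delta_a\})\cap\mathcal{M}^N$ and the midpoints $U_{n\pm 1/2}$ stay in $B(U^*,\max\{\delta_a,\delta_b\})$. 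The midpoint identities $U_{n+1/2}=U_n-\frac{\tau}{2}\nabla_G E(U_{n+1/2})$ and $U_{n-1/2}=U_n+\frac{\tau}{2}\nabla_G E(U_{n-1/2})$ are then precisely $U_+=(I+\frac{\tau}{2}\mathcal{A}_{U_+})^{-1}U$ and $U_-=(I-\frac{\tau}{2}\mathcal{A}_{U_-})^{-1}U$ in the setup of Lemma \ref{Lemma: HessAppr} with $U=U_n$.

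Writing $A:=\nabla_G E(U_{n+1/2})$ and $B:=\nabla_G E(U_{n-1/2})$, one gets $U_+-U_-=-\frac{\tau}{2}(A+B)$. Using $\textup{tr}\langle A^\top B\rangle=\textup{tr}\langle B^\top A\rangle$, the expansion $\textup{tr}\langle(A+B)^\top(A-B)\rangle=|||A|||^2-|||B|||^2$ combined with Lemma \ref{Lemma: HessAppr} yields $|||B|||^2-|||A|||^2\geqslant\frac{\sigma\tau}{4}|||A+B|||^2$. The Lipschitz bound \eqref{Equ: GradientGrassLip} gives $|||A-B|||\leqslant\frac{L_1\tau}{2}|||A+B|||$, which together with the parallelogram identity $|||A-B|||^2+|||A+B|||^2=2(|||A|||^2+|||B|||^2)$ produces the lower bound $|||A+B|||^2\geqslant \frac{8(|||A|||^2+|||B|||^2)}{4+\tau^2 L_1^2}$. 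Substituting and rearranging delivers the one-step contraction $|||A|||^2\leqslant r\,|||B|||^2$ with $r=\frac{4+\tau^2L_1^2-2\sigma\tau}{4+\tau^2L_1^2+2\sigma\tau}$. Iterating from $N_0$ gives $|||\nabla_G E(U_{n+1/2})|||^2\leqslant r^{n-N_0+1}|||\nabla_G E(U_{N_0-1/2})|||^2$, and the first claim then follows from the triangle inequality $|||\nabla_G E(U_n)|||\leqslant(1+\frac{L_1\tau}{2})|||\nabla_G E(U_{n+1/2})|||$, which is immediate from \eqref{Equ: GradientGrassLip} applied to the pair $(U_n,U_{n+1/2})$ whose distance is $\frac{\tau}{2}|||\nabla_G E(U_{n+1/2})|||$.

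For the second estimate, I would invoke Lemma \ref{Lemma: GradientConvEst} with $U=U_k$, $U_+=U_{k+1/2}$, $\bar U_+=U_{k+1}$ to obtain $E(U_k)-E(U_{k+1})\leqslant\frac{\tau(L+3)}{2}|||\nabla_G E(U_{k+1/2})|||^2$ for all $k\geqslant N_1$. Since $E(U_n)$ decreases monotonically and converges to $E(U^*)$ by Theorem \ref{Theo: MainConvergeOp}, telescoping from $n$ to $\infty$ gives $E(U_n)-E(U^*)\leqslant\frac{\tau(L+3)}{2}\sum_{k=n}^{\infty}|||\nabla_G E(U_{k+1/2})|||^2$, and substituting the geometric decay from the first part bounds this by $\frac{\tau(L+3)}{2}\cdot\frac{r^{n-N_1+1}}{1-r}|||\nabla_G E(U_{N_1-1/2})|||^2$. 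A short calculation shows $1-r=\frac{4\sigma\tau}{4+\tau^2L_1^2+2\sigma\tau}$, so the prefactor collapses to $\frac{(L+3)(4+\tau^2L_1^2+2\sigma\tau)}{8\sigma}$, matching the stated bound. The main technical obstacle I anticipate is the algebraic chain combining Hessian coercivity, the Lipschitz estimate, and the parallelogram identity to pin down the exact ratio $r$; once this is in hand, the iteration and the telescoping are routine.
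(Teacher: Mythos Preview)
Your proposal is correct and follows essentially the same route as the paper's own proof: both identify $U_{n+1/2}$ and $U_{n-1/2}$ with $U_+$ and $U_-$ in Lemma~\ref{Lemma: HessAppr}, combine that coercivity with the Lipschitz bound \eqref{Equ: GradientGrassLip} and the parallelogram identity to obtain the contraction ratio $r=\frac{4+\tau^2L_1^2-2\sigma\tau}{4+\tau^2L_1^2+2\sigma\tau}$ for $|||\nabla_G E(U_{n+1/2})|||$, pass to $|||\nabla_G E(U_n)|||$ via the triangle inequality, and finish the energy estimate by Lemma~\ref{Lemma: GradientConvEst} and a geometric-series telescoping. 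The only cosmetic difference is that the paper works with $|||U_{n+1/2}-U_{n-1/2}|||$ where you use $\frac{\tau}{2}|||A+B|||$; these are identical quantities.
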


    \begin{proof}
    Due to \eqref{Equ: DistanceConv}, there exist $N_0\in\mathbb{N}$ that $U_n\in B(U^*, \min\{\delta_3, \delta_a\})\bigcap\mathcal{M}^N$, $\forall n \geqslant N_0$ and $N_1\geqslant N_0$ that $U_n\in B(U^*, \min\{\delta_3, \delta_a, \delta_b\})\bigcap\mathcal{M}^N$, $\forall n \geqslant N_1$.

    We observe that
    \begin{equation}\label{Equ: Rate1}
    \begin{split}
      &\tau||\nabla_G E(U_{n + 1\slash2})|||^2 - \tau|||\nabla_G E(U_{n - 1\slash2})|||^2\\
      =& \tau \textup{tr}\Big( \big\langle \big( \nabla_G E(U_{n + 1\slash2}) + \nabla_G E(U_{n - 1\slash2}) \big)^\top\big(\nabla_G E(U_{n + 1\slash2}) - \nabla_G E(U_{n - 1\slash2}) \big) \big\rangle \Big)\\
      =& -2\textup{tr}\Big(\big\langle \big(U_{n + 1\slash2} - U_{n - 1\slash2}\big)^\top \big( \nabla_G E(U_{n + 1\slash2}) - \nabla_G E(U_{n - 1\slash2}) \big) \big\rangle \Big).
    \end{split}
    \end{equation}
    And the parallelogram identity yields
    \begin{equation}
    \begin{split}
      &4||| U_{n + 1\slash2} - U_{n - 1\slash2} |||^2 = \tau^2 |||  \nabla_G E(U_{n + 1\slash2}) +  \nabla_G E(U_{n - 1\slash2}) |||^2\\
      =& 2\tau^2||| \nabla_G E(U_{n + 1\slash2})|||^2 + 2\tau^2||| \nabla_G E(U_{n + 1\slash2})|||^2\\
       & -\tau^2||| \nabla_G E(U_{n + 1\slash2}) - \nabla_G E(U_{n - 1\slash2}) |||^2,
    \end{split}
    \end{equation}
    which together with
    \begin{equation}
      \begin{split}
        &|||\nabla_G E(U_{n + 1\slash2}) - \nabla_G E(U_{n - 1\slash2}) |||^2 \leqslant L_1^2 |||U_{n + 1\slash2} - U_{n - 1\slash2} |||^2
      \end{split}
    \end{equation}
    leads to
    \begin{equation}\label{Equ: Rate2}
    \begin{split}
      &||| U_{n + 1\slash2} - U_{n - 1\slash2} |||^2 \geqslant \frac{2\tau^2}{4 + \tau^2L_1^2} \Big( |||\nabla_G E(U_{n + 1\slash2})|||^2 + |||\nabla_G E(U_{n - 1\slash2})|||^2 \Big).
    \end{split}
    \end{equation}

    Thus we obtain from Lemma \ref{Lemma: HessAppr}, \eqref{Equ: Rate2} and \eqref{Equ: Rate1} that
    \begin{equation}
    \begin{split}
      &\tau|||\nabla_G E(U_{n + 1\slash2})|||^2 - \tau|||\nabla_G E(U_{n - 1\slash2})|||^2\\
      \leqslant& - \frac{2\sigma\tau^2}{4 + \tau^2L_1^2} \Big( |||\nabla_G E(U_{n + 1\slash2})|||^2 + |||\nabla_G E(U_{n - 1\slash2})|||^2 \Big), ~\forall n \geqslant N_0.
    \end{split}
    \end{equation}
    Namely, we have
    \begin{equation}
    \begin{split}
       \Big(1 + \frac{2\sigma \tau}{4 + \tau^2L_1^2} \Big) |||\nabla_G E(U_{n + 1\slash2})|||^2 \leqslant& \Big( 1 - \frac{2\sigma \tau}{4 + \tau^2L_1^2} \Big) |||\nabla_G E(U_{n - 1\slash2})|||^2, \forall n \geqslant N_0,
    \end{split}
    \end{equation}
    or
    \begin{equation}
      |||\nabla_G E(U_{n + 1\slash2})||| \leqslant \Big(\frac{4 + \tau^2L_1^2 - 2\sigma \tau}{4 + \tau^2L_1^2 + 2\sigma \tau}\Big)^{1\slash2} |||\nabla_G E(U_{n - 1\slash2})|||, \forall n \geqslant N_0.
    \end{equation}
    Therefore,
    \begin{equation}\label{Equ: GradientConvEst}
      |||\nabla_G E(U_{n + 1\slash2})||| \leqslant \Big(\frac{4 + \tau^2L_1^2 - 2\sigma \tau}{4 + \tau^2L_1^2 + 2\sigma \tau}\Big)^{(n - N_0 + 1)\slash 2} |||\nabla_G E(U_{N_0 - 1\slash2})|||, \forall n \geqslant N_0
    \end{equation}
    and
    \begin{equation}
    \begin{split}
      &|||\nabla_G E(U_{n})||| \leqslant |||\nabla_G E(U_{n + 1\slash2})||| + |||\nabla_G E(U_{n}) - \nabla_G E(U_{n + 1\slash2}) |||\\
      \leqslant& |||\nabla_G E(U_{n + 1\slash2})||| + L_1 ||| U_{n} - U_{n + 1\slash2} |||\\
      \leqslant& \Big(1 + \frac{L_1\tau}{2}\Big)|||\nabla_G E(U_{n + 1\slash2})|||\\
      \leqslant& \Big(1 + \frac{L_1\tau}{2}\Big) \Big(\frac{4 + \tau^2L_1^2 - 2\sigma \tau}{4 + \tau^2L_1^2 + 2\sigma \tau}\Big)^{(n - N_0 + 1)\slash 2} |||\nabla_G E(U_{N_0 - 1\slash2})|||, \forall n \geqslant N_0.
    \end{split}
    \end{equation}

    Finally, we obtain from Lemma \ref{Lemma: GradientConvEst} that
    \begin{equation}
    \begin{split}
      &E(U_n) - E(U_{n + 1}) \leqslant \frac{\tau (L + 3)}{2} ||| \nabla_G E(U_{n + 1\slash2}) |||^2 \\
      \leqslant&  \frac{\tau (L + 3)}{2}  \Big(\frac{4 + \tau^2L_1^2 - 2\sigma \tau}{4 + \tau^2L_1^2 + 2\sigma \tau}\Big)^{n - N_1 + 1} |||\nabla_G E(U_{N_1 - 1\slash2})|||^2, \forall n \geqslant N_1.
    \end{split}
    \end{equation}
    Consequently,
    \begin{equation}
    \begin{split}
      &E(U_n) - E(U^*)\\
      \leqslant& \frac{(L + 3)(4 + \tau^2L_1^2 + 2\sigma \tau)}{8\sigma}  \Big(\frac{4 + \tau^2L_1^2 - 2\sigma \tau}{4 + \tau^2L_1^2 + 2\sigma \tau}\Big)^{n - N_1 + 1} |||\nabla_G E(U_{N_1 - 1\slash2})|||^2, \forall n \geqslant N_1.
    \end{split}
    \end{equation}
    This completes the proof.
    \end{proof}

    \begin{rema}
    Note that
    \begin{equation}
    \begin{split}
      &\frac{4 + \tau^2L_1^2 - 2\sigma \tau}{4 + \tau^2L_1^2 + 2\sigma \tau}
       = 1 - \frac{4\sigma \tau}{4 + \tau^2L_1^2 + 2\sigma \tau}\\
      =&1 - \frac{4\sigma}{4\slash \tau + \tau L_1^2 + 2\sigma }
        \geqslant 1 - \frac{4\sigma}{4L_1 + 2\sigma }
    \end{split}
    \end{equation}
    where the equality holds if and only if $\tau = {2}\slash{L_1}$. As a result,  Algorithm \ref{Alg: Midpoint} possesses the optimal convergence rate if
    \begin{equation}\label{Equ: OptimalRate}
      \tau = \left\{
      \begin{array}{ll}
        \min\{\delta_{r_1}, \delta_{r_2}\},   &\quad \displaystyle\frac{2}{L_1} > \min\{\delta_{r_1}, \delta_{r_2}\},\\
        \displaystyle\frac{2}{L_1},    &\quad \displaystyle\frac{2}{L_1} \leqslant \min\{\delta_{r_1}, \delta_{r_2}\}.
      \end{array}
      \right.
    \end{equation}
    Moreover, if $U_{k + 1\slash2} \neq U_{k - 1\slash2}$ for some $k \geqslant N_0$, then $L_1\geqslant {\sigma}\slash{2}$.
    Notice that
    \begin{equation}
      \frac{4 + \tau^2L_1^2 - 2\sigma \tau}{4 + \tau^2L_1^2 + 2\sigma \tau}\geqslant 1 - \frac{4\sigma}{4L_1 + 2\sigma }\geqslant 0
    \end{equation}
    where the last equality holds when $L_1 = {\sigma}\slash{2}$. Then we see that convergence rate can approach $0$ given proper assumptions in theory.
    \end{rema}

    \section{An orthogonality preserving iteration}
    We understand that the convergence of SCF iteration of nonlinear eigenvalue models can neither be predicted by theory nor by numerics for those systems in large scale with small energy gap. In this section, we propose and analyze an orthogonality preserving iteration scheme based on the gradient flow based model, which is indeed a practical version of the midpoint scheme proposed in section \ref{Sec: Exact}. In implementation of Algorithm \ref{Alg: Midpoint}, we are not able to get the exact  $U_{n+1}$ of \eqref{Equ: MidpointOp}. Some approximation should be taken into account in solving \eqref{Equ: MidpointOp}, which then produces the orthogonality preserving iteration scheme that will be proved to be convergent.

    \subsection{An iteration}
    With the gradient flow based approach, in this subsection, we are able to design a convergent orthogonality preserving iteration scheme for solving the Kohn-Sham equation.  We recall and split   midpoint scheme (\ref{Equ: MidpointOp}) into two equations
    \begin{equation}\label{Equ: MidpointSplitOp}
      \begin{split}
      \frac{U_{n + 1\slash2} - U_{n}}{{\Delta t_n}\slash{2}} &= - \nabla_{G} E(U_{n + 1\slash2}),\\
      \frac{U_{n + 1} - U_{n + 1\slash2}}{{\Delta t_n}\slash{2}} &= - \nabla_{G} E(U_{n + 1\slash2}),
      \end{split}
    \end{equation}
    and provide partition
    \begin{equation}
      0 = t_0 < t_1 < t_2 < \cdots < t_n < \cdots,
    \end{equation}
    where $\lim\limits_{n\rightarrow+\infty} t_n = +\infty$ and $\Delta t_n = t_{n + 1} - t_n$.

    We may solve the first equation of \eqref{Equ: MidpointSplitOp} approximatively and then update the approximation using $U_{n + 1} = 2U_{n + 1\slash2} - U_n$. Consequently, we obtain Algorithm \ref{Alg: Self-consistent}.

    \begin{algorithm}\label{Alg: Self-consistent}
    \caption{An orthogonality preserving iteration}
      Given $\varepsilon > 0$, $\tilde\delta_T > 0$, initial data $U_0\in(V_{N_g})\bigcap\mathcal{M}^N$, calculate gradient $\nabla_{G} E(U_0)$, let $n = 0$\;
      \While{$||| \nabla_{G} E(U_n) ||| > \varepsilon$}{
      Set time step size $\Delta t_n \leqslant \tilde\delta_T$ and iteration times $p_n\in\mathbb{N}_+$\;
      $U_{n + 1\slash2}^{(0)} = U_{n}$\;
      \For{$k = 1, \ldots, p_n$}{
      \begin{eqnarray}\label{Equ: inexactMidpoint IntermediateOp}
      U_{n + 1\slash2}^{(k)} = \Big( I + \displaystyle\frac{\Delta t_n}{2} \mathcal{A}_{U_{n + 1\slash2}^{(k - 1)}} \Big)^{-1}U_{n};\quad\quad\quad\quad\quad\quad\quad\quad\quad\quad\quad\quad\quad\quad\quad\quad~
      \end{eqnarray}
      }
      $U_{n + 1} = 2U_{n + 1\slash2}^{(p_n)} - U_n$\;
      Let $n = n + 1$, calculate gradient $\nabla_{G} E(U_n)$\;
      }
    \end{algorithm}
    \begin{rema}
    Although Algorithm \ref{Alg: Self-consistent} involves time step $\Delta t_n$, we can regard the time step as a parameter and then Algorithm \ref{Alg: Self-consistent} becomes a nonlinear operator iteration.
    \end{rema}

    We refer to Theorem \ref{Theo: inexactMainConvergeOp} for the choice of $\tilde\delta_T$ in Algorithm \ref{Alg: Self-consistent}. Due to the low-rank structure in $ \Big( I +  s \mathcal{A}_{U} \Big)^{-1}$, we may apply Sherman-Morrison-Woodbury formula \cite{dai2017conjugate, Str_Pre2} to obtain
    \begin{equation}
    \begin{split}
      \Big( I +  s \mathcal{A}_{U} \Big)^{-1}\tilde U =~& \tilde U + s[\nabla E(U)\quad U]\\
      \cdot\Bigg(I_{2N} + s \bigg[&
      \begin{array}{cc}
            \langle U^\top \nabla E(U) \rangle                     & -\langle U^\top U\rangle\\
            \langle\big(\nabla E(U)\big)^\top \nabla E(U) \rangle  & -\langle U^\top \nabla E(U)\rangle
      \end{array}
      \bigg]\Bigg)^{-1}
      \bigg[
      \begin{array}{c}
        \langle U^\top \tilde U\rangle\\
        \big\langle \big(\nabla E(U)\big)^\top \tilde U\big\rangle
      \end{array}
      \bigg].
    \end{split}
    \end{equation}
    We observe that the computational complexity from $U_n$ to $U_{n + 1}$ of Algorithm \ref{Alg: Self-consistent} is mainly determined by $\langle U^\top U\rangle$, $\langle U^\top \nabla E(U) \rangle$ and $\big\langle\big(\nabla E(U)\big)^\top \nabla E(U) \big\rangle$. If $\nabla E$ is a dense operator, the computational complexity of Algorithm \ref{Alg: Self-consistent} is $\mathcal{O}(NN_g^2)$; otherwise, if $\nabla E$ is sparse, generated by finite element bases for example, the computational complexity can be reduced to $\mathcal{O}(N^2N_g)$.

    Similar to section \ref{Sec: Exact}, we have
    \begin{proposition}
    If $U_n$ is obtained from Algorithm \ref{Alg: Self-consistent}, then $U_n\in(V_{N_g})^N\bigcap\mathcal{M}^N$ for all $n\in\mathbb{N}$.
    \end{proposition}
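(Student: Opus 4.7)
The plan is to proceed by induction on $n$, using Lemma \ref{Lemma: OrthoOp} in the same spirit as in the proof of Proposition \ref{Prop: OrthoOp}. The crucial observation is that Lemma \ref{Lemma: OrthoOp} asserts that whenever $U\in(V_{N_g})^N\cap\mathcal{M}^N$, the quantity $\bigl(2(I+s\mathcal{A}_{\tilde U})^{-1}U - U\bigr)$ lies in $(V_{N_g})^N\cap\mathcal{M}^N$ for \emph{any} $\tilde U\in(V_{N_g})^N$ and any $s\in\mathbb{R}$. In particular, the skew-symmetric structure of $\mathcal{A}_{\tilde U}$ that drives the orthogonality preservation does not depend on whether $\tilde U$ and $U$ are related in any particular way, so the inner fixed-point iterations in Algorithm \ref{Alg: Self-consistent} can be truncated after any finite number of steps $p_n$ without destroying the Stiefel property.

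With this observation in hand, I would organize the induction as follows. The base case $U_0\in(V_{N_g})^N\cap\mathcal{M}^N$ is given by the algorithm's input assumption. For the inductive step, suppose $U_n\in(V_{N_g})^N\cap\mathcal{M}^N$. By the update rule \eqref{Equ: inexactMidpoint IntermediateOp} for the final inner iterate,
\begin{equation*}
U_{n+1\slash 2}^{(p_n)} = \Bigl(I + \tfrac{\Delta t_n}{2}\mathcal{A}_{U_{n+1\slash 2}^{(p_n-1)}}\Bigr)^{-1}U_n,
\end{equation*}
and hence the outer update gives
\begin{equation*}
U_{n+1} = 2U_{n+1\slash 2}^{(p_n)} - U_n = 2\Bigl(I + \tfrac{\Delta t_n}{2}\mathcal{A}_{U_{n+1\slash 2}^{(p_n-1)}}\Bigr)^{-1}U_n - U_n.
\end{equation*}
Applying Lemma \ref{Lemma: OrthoOp} with $U\leftarrow U_n$, $s\leftarrow \Delta t_n/2$, and $\tilde U\leftarrow U_{n+1\slash 2}^{(p_n-1)}$, we conclude that $U_{n+1}\in(V_{N_g})^N\cap\mathcal{M}^N$, completing the induction.

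There is essentially no hard step here; the argument is a routine invocation of Lemma \ref{Lemma: OrthoOp} followed by induction. The only mild subtlety worth flagging explicitly is that the conclusion of Lemma \ref{Lemma: OrthoOp} is independent of the choice of $\tilde U$, which is precisely what makes the \emph{inexact} inner iteration harmless: no matter how few inner sweeps $p_n$ one performs, nor how far $U_{n+1\slash 2}^{(p_n-1)}$ is from satisfying the implicit midpoint equation exactly, the outer update produces a point back on $\mathcal{M}^N$. Since the algorithm also keeps all iterates in the finite-dimensional subspace $(V_{N_g})^N$ (every update is a linear combination of columns formed from $U_n$ and $\nabla E(U_n)\in (V_{N_g})^N$), the intersection with $(V_{N_g})^N$ is automatic.
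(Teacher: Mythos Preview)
Your proposal is correct and follows essentially the same approach as the paper, which simply states that the result holds ``similar to section \ref{Sec: Exact}'' without spelling out the details. Your explicit identification of $U_{n+1}=2(I+\tfrac{\Delta t_n}{2}\mathcal{A}_{U_{n+1/2}^{(p_n-1)}})^{-1}U_n-U_n$ and direct appeal to Lemma \ref{Lemma: OrthoOp} (whose conclusion is indeed independent of $\tilde U$) is exactly the intended argument.
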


    By the mathematical induction, we obtain that the auxiliary updating points are inside the Stiefel manifold, too.
    \begin{proposition}\label{Prop: inexactInnerProperty}
        If $U_n$ is obtained from Algorithm \ref{Alg: Self-consistent}, then spectrum\\ $\sigma\big(\langle {U_{n + 1\slash2}^{(k)}}^\top {U_{n + 1\slash2}^{(k)}}\rangle\big)$ of $\langle {U_{n + 1\slash2}^{(k)}}^\top {U_{n + 1\slash2}^{(k)}}\rangle$ satisfies
      \begin{equation}
        \sigma\big(\langle {U_{n + 1\slash2}^{(k)}}^\top {U_{n + 1\slash2}^{(k)}}\rangle \big)\subset[0, 1],
      \end{equation}
      for any $p_n\in\mathbb{N}_+$ and $k = 1, 2, \ldots, p_n$.
    \end{proposition}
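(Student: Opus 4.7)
The plan is to reduce the statement directly to the already established Lemma \ref{Lemma: Inner PropertyOp} by matching the algebraic form of each inner iterate against the generic construction in that lemma.

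First I would invoke the previous proposition to ensure that the outer iterate satisfies $U_n \in (V_{N_g})^N \cap \mathcal{M}^N$; this supplies the orthogonality hypothesis on the rightmost factor that Lemma \ref{Lemma: Inner PropertyOp} requires. Next, from the update rule \eqref{Equ: inexactMidpoint IntermediateOp} I would rewrite each inner iterate as
\[
U_{n + 1\slash 2}^{(k)} = \Big(I + \tfrac{\Delta t_n}{2}\mathcal{A}_{U_{n + 1\slash 2}^{(k - 1)}}\Big)^{-1} U_n,
\]
which is \emph{exactly} of the shape $\bar U = (I + s \mathcal{A}_{\tilde U})^{-1} U$ treated in Lemma \ref{Lemma: Inner PropertyOp}, with the identifications $s = \Delta t_n / 2 \in \mathbb{R}$, $\tilde U = U_{n + 1\slash 2}^{(k - 1)} \in (V_{N_g})^N$ (for which no orthogonality is needed), and $U = U_n \in (V_{N_g})^N \cap \mathcal{M}^N$. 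A single application of the lemma then gives $\sigma\big(\langle (U_{n + 1\slash 2}^{(k)})^\top U_{n + 1\slash 2}^{(k)}\rangle\big) \subset [0,1]$, and since this holds for each $k = 1, \ldots, p_n$ with the same $U_n$, the conclusion follows at once.

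Strictly speaking no nontrivial induction is required: Lemma \ref{Lemma: Inner PropertyOp} only demands $U \in \mathcal{M}^N$ (supplied by $U_n$) while $\tilde U$ is allowed to be an arbitrary element of $(V_{N_g})^N$ (so any previous inner iterate $U_{n + 1\slash 2}^{(k - 1)}$ qualifies regardless of its own spectral properties). The mild bookkeeping the authors refer to as induction is simply the outer recursion on $n$ needed to inherit $U_n \in \mathcal{M}^N$ from Proposition~5.2, together with running $k$ from $1$ up to $p_n$. There is essentially no obstacle: the real content was absorbed into the proof of Lemma \ref{Lemma: Inner PropertyOp}, where skew-symmetry of $\mathcal{A}_{\tilde U}$ forced $\|(I - s^2 \mathcal{A}_{\tilde U}^2)^{-1}\|_2 \leqslant 1$ and hence the spectral bound. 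The only minor care point is to ensure the parameter $\tilde U$ passed to the lemma at step $k$ is indeed in $(V_{N_g})^N$, which is immediate from the algorithm definition.
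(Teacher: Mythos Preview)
Your proposal is correct and matches the paper's approach: the paper's proof is the single phrase ``by the mathematical induction,'' and what you have written is exactly the unpacking of that---invoke orthogonality of $U_n$ from the preceding proposition and apply Lemma~\ref{Lemma: Inner PropertyOp} with $U = U_n$, $\tilde U = U_{n+1/2}^{(k-1)}$, $s = \Delta t_n/2$. Your observation that no genuine induction on $k$ is needed (because the rightmost factor is always the orthogonal $U_n$, and the lemma places no constraint on $\tilde U$) is a valid sharpening of the paper's phrasing.
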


    \subsection{Convergence}

    Now we prove the convergence of the orthogonality preserving iteration scheme. First we prove a useful lemma.

    \begin{lemma}\label{Lemma: inexactMidAppOp}
    If~$U_{n + 1\slash2}^{(k)}$~is defined in Algorithm \ref{Alg: Self-consistent} for any~$p\in\mathbb{N}_+$ and
       \begin{equation*}
       \begin{split}
         ||\mathcal{A}_{U_i} - \mathcal{A}_{U_j} || \leqslant \hat L ||| U_i - U_j ||| \qquad \forall U_i, U_j\in B(U_{N + 1\slash2}, \delta_r),
       \end{split}
       \end{equation*}
    then there exists a upper bound $\delta_z$ for $\Delta t_n$ that
    \begin{equation}
    ||| U_{n + 1\slash2}^{(k)} - U_{n + 1\slash2}||| \leqslant C\Delta t_n
    \end{equation}
    and
    \[U_{n + 1\slash2}^{(k)} \in B(U_{n + 1\slash2}, \delta_r)\]
    for all $\Delta t_n \in [0, \delta_z]$ and $k = 1, 2, \ldots, p$, where $U_{n + 1\slash2}$ is the solution of \eqref{Equ: Midpoint IntermediateOp} and $C$ is a constant.
    \end{lemma}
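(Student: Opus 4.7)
The plan is to set up a recursion for the error $e_k := |||U_{n+1\slash2}^{(k)} - U_{n+1\slash2}|||$, where $U_{n+1\slash2}$ denotes the exact midpoint solution from \eqref{Equ: Midpoint IntermediateOp}. Both $U_{n+1\slash2}^{(k)}$ and $U_{n+1\slash2}$ are expressible as resolvents applied to the common vector $U_n$, namely $U_{n+1\slash2}^{(k)} = (I + (\Delta t_n\slash2)\mathcal{A}_{U_{n+1\slash2}^{(k-1)}})^{-1}U_n$ and $U_{n+1\slash2} = (I + (\Delta t_n\slash2)\mathcal{A}_{U_{n+1\slash2}})^{-1}U_n$, so I would subtract and apply the resolvent identity $A^{-1} - B^{-1} = A^{-1}(B-A)B^{-1}$ to obtain
\begin{equation*}
U_{n+1\slash2}^{(k)} - U_{n+1\slash2} = \frac{\Delta t_n}{2}\bigl(I + \tfrac{\Delta t_n}{2}\mathcal{A}_{U_{n+1\slash2}^{(k-1)}}\bigr)^{-1}\bigl(\mathcal{A}_{U_{n+1\slash2}} - \mathcal{A}_{U_{n+1\slash2}^{(k-1)}}\bigr)U_{n+1\slash2}.
\end{equation*}

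Next I would bound the resolvent operator uniformly. Because $\mathcal{A}_U$ is skew-symmetric (as already used in Lemma \ref{Lemma: Inner PropertyOp}), a direct computation gives $|||(I + \tfrac{\Delta t_n}{2}\mathcal{A}_U)V|||^2 = |||V|||^2 + (\tfrac{\Delta t_n}{2})^2|||\mathcal{A}_U V|||^2 \geqslant |||V|||^2$, so the resolvent is a nonexpansion. Coupling this with the assumed Lipschitz bound $\|\mathcal{A}_{U_i}-\mathcal{A}_{U_j}\| \leqslant \hat L\,|||U_i-U_j|||$ yields the recursion $e_k \leqslant \tfrac{\Delta t_n}{2}\hat L\,|||U_{n+1\slash2}|||\cdot e_{k-1}$, provided $U_{n+1\slash2}^{(k-1)} \in B(U_{n+1\slash2},\delta_r)$.

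To seed the recursion I would use the midpoint equation itself: $U_{n+1\slash2} - U_n = -\tfrac{\Delta t_n}{2}\mathcal{A}_{U_{n+1\slash2}}U_{n+1\slash2}$ together with $U_{n+1\slash2}^{(0)}=U_n$ give $e_0 \leqslant \tfrac{\Delta t_n}{2}\|\mathcal{A}_{U_{n+1\slash2}}\|\cdot|||U_{n+1\slash2}||| = O(\Delta t_n)$. I would then select $\delta_z > 0$ small enough so that $\tfrac{\Delta t_n}{2}\hat L\,|||U_{n+1\slash2}||| \leqslant \tfrac12$ and $2e_0 \leqslant \delta_r$ for every $\Delta t_n \in [0,\delta_z]$, with the constant $C$ coming from the prefactor in the bound on $e_0$. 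A straightforward induction on $k$ then delivers $e_k \leqslant 2 e_0 \leqslant C\Delta t_n$ and $U_{n+1\slash2}^{(k)} \in B(U_{n+1\slash2},\delta_r)$ simultaneously.

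The main obstacle is the mild circularity in the Lipschitz assumption: it is only available on the ball $B(U_{n+1\slash2},\delta_r)$, but the recursion requires $U_{n+1\slash2}^{(k-1)}$ to already lie in that ball in order to estimate $U_{n+1\slash2}^{(k)}$. This is exactly what the simultaneous induction is designed to handle, and it closes cleanly because the recursion is a strict contraction once $\Delta t_n$ is small, so the iterates never escape the neighborhood they started close to.
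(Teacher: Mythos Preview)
Your proposal is correct and follows essentially the same approach as the paper: both set up the recursion via the resolvent identity, bound the resolvent by $1$ using skew-symmetry of $\mathcal{A}_U$, invoke the Lipschitz assumption on $\mathcal{A}$, and close with a simultaneous induction that keeps the iterates in $B(U_{n+1\slash2},\delta_r)$. The only cosmetic differences are that the paper writes the last factor as $(I+\tfrac{\Delta t_n}{2}\mathcal{A}_{U_{n+1\slash2}})^{-1}U_n$ rather than your simplified $U_{n+1\slash2}$ (these are equal), bounds $|||U_n|||$ by $\sqrt{N}$ instead of using $|||U_{n+1\slash2}|||$, and takes the contraction factor $\leqslant 1$ (using $e_{k-1}\leqslant\delta_r$ directly to get $e_k\leqslant C\Delta t_n$) rather than your stricter $\leqslant \tfrac12$.
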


    \begin{proof}
    We prove the lemma by mathematical induction. Set
    \begin{equation}
       \delta_z =
       \left\{
       \begin{array}{l}
         \min\Big\{\frac{2}{\hat L\sqrt{N}}, \frac{2\delta_r}{||| \nabla_G E(U_{n + 1\slash2}) |||}, \delta^*\Big\}, \quad ||| \nabla_G E(U_{n + 1\slash2}) ||| > 0,\\
         \min\Big\{\frac{2}{\hat L\sqrt{N}}, \delta^*\Big\}, \quad ||| \nabla_G E(U_{n + 1\slash2}) ||| = 0.\\
       \end{array}
       \right.
     \end{equation} Clearly, the claim holds when $k = 0$ because
    \begin{equation}
      ||| U_{n + 1\slash2}^{(0)} - U_{n + 1\slash2} ||| = ||| U_n - U_{n + 1\slash2}||| = \frac{||| \nabla_G E(U_{n + 1\slash2}) |||}{2}  \Delta t_n.
    \end{equation}

    Suppose the claim holds for $k - 1$. Since $U_{n + 1\slash2}$ is the solution of \eqref{Equ: Midpoint IntermediateOp}, we have
    \begin{equation*}
    \begin{split}
      &U_{n + 1\slash2}^{(k)} - U_{n + 1\slash2}\\
      =&\Big( I + \frac{\Delta t_n}{2} \mathcal{A}_{U_{n + 1\slash2}^{(k - 1)}} \Big)^{-1}U_{n} - \Big( I + \frac{\Delta t_n}{2} \mathcal{A}_{U_{n + 1\slash2}} \Big)^{-1}U_{n}\\
      =&\frac{\Delta t_n}{2}\Big( I + \frac{\Delta t_n}{2} \mathcal{A}_{U_{n + 1\slash2}^{(k - 1)}} \Big)^{-1} \Big(\mathcal{A}_{U_{n + 1\slash2}} - \mathcal{A}_{U_{n + 1\slash2}^{(k - 1)}}\Big) \Big( I + \frac{\Delta t_n}{2} \mathcal{A}_{U_{n + 1\slash2}} \Big)^{-1}U_{n}.
    \end{split}
    \end{equation*}
    Note that $\mathcal{A}_{U}$ is skew-symmetric. We obtain
    \begin{equation*}\label{Equ: inexactMidpoint Pf2Op}
    \begin{split}
      &\bigg\Vert \Big(I + \frac{\Delta t_n}{2}\mathcal{A}_{U} \Big)^{-1}\bigg\Vert = \bigg\Vert \Big(I - \frac{1}{4}(\Delta t_n \mathcal{A}_{U})^2\Big)^{-1}\bigg\Vert^{\frac12} \leqslant 1, \quad\forall U\in (V_{N_g})^N
    \end{split}
    \end{equation*}
    and hence
    \begin{equation*}
    \begin{split}
      &||| U_{n + 1\slash2}^{(k)} - U_{n + 1\slash2} |||\\
      \leqslant& \frac{\Delta t_n}{2} \bigg\Vert\Big(I + \frac{\Delta t_n}{2} \mathcal{A}_{U_{n + 1\slash2}^{(k - 1)}} \Big)^{-1}\bigg\Vert \bigg\Vert\mathcal{A}_{U_{n + 1\slash2}} - \mathcal{A}_{U_{n + 1\slash2}^{(k - 1)}}\bigg\Vert  \bigg\Vert\Big(I + \frac{\Delta t_n}{2} \mathcal{A}_{U_{n + 1\slash2}}  \Big)^{-1}\bigg\Vert ||| U_n |||\\
      \leqslant&\frac{\Delta t_n\hat L\sqrt{N}}{2} ||| U_{n + 1\slash2}^{(k - 1)} - U_{n + 1\slash2} ||| \leqslant \frac{\hat L\sqrt{N}\delta_r}{2} \Delta t_n.
    \end{split}
    \end{equation*}
    By $\Delta t_n \in [0, \delta_z]$, we see $U_{n + 1\slash2}^{(k)} \in B(U_{n + 1\slash2}, \delta_r)$ and the claim also holds for $k$. Thus we confirm that $U_{n + 1\slash2}^{(k)} \in B(U_{n + 1\slash2}, \delta_r)$ and
    \begin{equation*}
     ||| U_{n + 1\slash2}^{(k)} - U_{n + 1\slash2} ||| \leqslant C\Delta t_n
    \end{equation*}
    for all $k = 0, 1, \ldots, p$, where $C$ is some constant.
    \end{proof}

    Similar to the  midpoint scheme in Lemma \ref{Lemma: Energy decreaseOp}, we prove the local energy descending property for Algorithm \ref{Alg: Self-consistent}. We introduce a mapping $h_p$ from $(U, s)\in(V_{N_g})^N\times\mathbb{R}$ to $h_p(U, s)\in(V_{N_g})^N$ as follows:
    \begin{equation}
    h_p(U, s) = 2\bar U^{(p)} - U,
    \end{equation}
    where $\bar U^{(p)}$ is recursively defined by
    \begin{equation}
      \begin{split}
        \bar U^{(k)} &= \Big( I + \frac{s}{2} \mathcal{A}_{\bar U^{(k - 1)}} \Big)^{-1} U,~k = p, p - 1, \ldots, 1,\\
        \bar U^{(0)} &= U.
      \end{split}
    \end{equation}
    In this section, we always assume that $\nabla E$ is local Lipschitz continuous in the neighborhood of a local minimizer $U^*\in(V_{N_g})\bigcap\mathcal{M}^N$:
    \begin{equation}\label{Equ: inexactLocalLipschitzOp}
      \begin{split}
        ||| \nabla E(U_i) - \nabla E(U_j))||| \leqslant \tilde L ||| U_i - U_j |||, \forall U_i, U_j\in B(U^*, \tilde\delta_L),
      \end{split}
    \end{equation}
    where $\tilde\delta_L > \max\{\delta_a, \delta_b\}$.
    \begin{lemma}\label{Lemma: inexactEnergyDecreaseOp}
    There holds
       \begin{equation}\label{Equ: Local Lipschitz2Op}
       \begin{split}
         ||| \nabla_G E(U_i) - \nabla_G E(U_j)||| \leqslant \tilde L_1 ||| U_i - U_j|||, \quad\Vert \mathcal{A}_{U_i} - \mathcal{A}_{U_j}\Vert \leqslant \tilde L_1 ||| U_i - U_j|||, \\
         \forall U_i, U_j\in B(U^*, \tilde\delta_L).
       \end{split}
       \end{equation}
    Moreover, there exists a upper bound $\tilde\delta_s$ for $s$ that
        \begin{equation}
        \begin{split}
          E(U) - E\big(h_p(U, s)\big) \geqslant \frac{s}{4N}\Big|\Big|\Big| \nabla_G E\Big(\frac{h_p(U,s) + E(U)}{2}\Big)\Big|\Big|\Big|^2,\\
          \forall U \in B(U^*, \delta_a)\bigcap\mathcal{M}^N, \forall s \in[0, \tilde\delta_s].
        \end{split}
        \end{equation}
    Meanwhile $h_p(U, s)\in B(U^*, \tilde\delta_L)$.
    \end{lemma}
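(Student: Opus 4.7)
The plan is to mimic the proof of Lemma \ref{Lemma: Energy decreaseOp}, adjusting for two new complications: the enlarged Lipschitz radius $\tilde\delta_L>\max\{\delta_a,\delta_b\}$, and the fact that in the inexact scheme the ``midpoint'' $\bar U^{(p)}=(h_p(U,s)+U)/2$ only approximately satisfies the fixed-point equation, since it solves $\bar U^{(p)}=(I+\tfrac{s}{2}\mathcal{A}_{\bar U^{(p-1)}})^{-1}U$ with $\bar U^{(p-1)}$ on the right. First, to establish \eqref{Equ: Local Lipschitz2Op}, I would transfer the decomposition \eqref{Equ: Lip1Op}--\eqref{Equ: Lip3Op} verbatim, replacing $\max\{\delta_a,\delta_b\}$ by $\tilde\delta_L$ and $L$ by $\tilde L$, to get the $\nabla_G E$ estimate. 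For $\mathcal{A}_U$, I would write $\mathcal{A}_U V=\nabla E(U)\langle U^\top V\rangle - U\langle \nabla E(U)^\top V\rangle$ and apply the same add-and-subtract trick to $(\mathcal{A}_{U_i}-\mathcal{A}_{U_j})V$ for arbitrary $V$; the pieces are controlled either by $\tilde L$-Lipschitzness of $\nabla E$ or by the uniform estimate $|||\nabla E(U)|||\leq|||\nabla E(U^*)|||+\tilde L\tilde\delta_L$ on the ball. Taking $\tilde L_1$ to be the maximum of the two resulting constants yields both inequalities in \eqref{Equ: Local Lipschitz2Op} simultaneously.

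For the descent inequality, abbreviate $\bar U:=\bar U^{(p)}$, so that $h_p(U,s)-U=-s\,\mathcal{A}_{\bar U^{(p-1)}}\bar U$ and $\bar U=(h_p(U,s)+U)/2$ by construction. The mean value theorem applied to $t\mapsto E(U+t(h_p(U,s)-U))$ on $[0,1]$ yields some $\xi\in(0,1)$ with
\[
E(h_p(U,s))-E(U)=-s\,\textup{tr}\big\langle\nabla E(S(\xi))^\top\mathcal{A}_{\bar U^{(p-1)}}\bar U\big\rangle,
\]
where $S(t):=U+t(h_p(U,s)-U)$. I would split this into a principal term $-s\,\textup{tr}\langle\nabla E(\bar U)^\top\mathcal{A}_{\bar U}\bar U\rangle$ plus two correction terms, arising from replacing $\nabla E(S(\xi))$ by $\nabla E(\bar U)$ and $\mathcal{A}_{\bar U^{(p-1)}}$ by $\mathcal{A}_{\bar U}$. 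The skew-symmetry identity in \eqref{Equ: Tmp1Op}, combined with $|||\bar U|||\leq\sqrt N$ from Proposition \ref{Prop: inexactInnerProperty}, shows the principal term is at least $\frac{s}{2N}|||\nabla_G E(\bar U)|||^2$. The $\nabla E$-mismatch is bounded by $\tfrac{s^2\tilde L}{2}|||\mathcal{A}_{\bar U^{(p-1)}}\bar U|||^2$ exactly as in \eqref{Equ: Tmp2Op}, using $|||S(\xi)-\bar U|||\leq\tfrac{s}{2}|||\mathcal{A}_{\bar U^{(p-1)}}\bar U|||$. The $\mathcal{A}$-mismatch is controlled by Lemma \ref{Lemma: inexactMidAppOp} (more precisely, by the sharper bound $|||\bar U^{(k)}-\hat U|||\leq\tfrac12|||\nabla_G E(\hat U)|||\,s$ extracted from its proof once the geometric factor is at most one, where $\hat U$ is the exact midpoint satisfying $\hat U=(I+\tfrac s2\mathcal{A}_{\hat U})^{-1}U$); triangulating gives $|||\bar U^{(p-1)}-\bar U|||=O(s|||\nabla_G E(\bar U)|||)$, and consequently $|||\mathcal{A}_{\bar U^{(p-1)}}\bar U|||=(1+O(s))|||\nabla_G E(\bar U)|||$. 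Both corrections therefore amount to $O(s^2)|||\nabla_G E(\bar U)|||^2$ and can be absorbed into $\frac{s}{4N}|||\nabla_G E(\bar U)|||^2$ for $s$ below a threshold $\tilde\delta_s$. The containment $h_p(U,s)\in B(U^*,\tilde\delta_L)$ follows because $\bar U$ lies within $O(s)$ of $\hat U$, while $2\hat U-U=g(U,s)\in B(U^*,\delta_b)$ by Lemma \ref{Lemma: Implicit Function TheoryOp} and $\tilde\delta_L>\delta_b$; a further shrinking of $\tilde\delta_s$ secures the inclusion.

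The hard part will be the simultaneous choice of $\tilde\delta_s$: one must verify that a single bound on $s$ keeps all $\bar U^{(k)}$, $k=0,\dots,p$, in the ball where the Lipschitz constants apply, absorbs both correction terms into the principal one \emph{uniformly in $p$}, and secures $h_p(U,s)\in B(U^*,\tilde\delta_L)$. The enabling fact is that Lemma \ref{Lemma: inexactMidAppOp} provides a $p$-uniform $O(s)$ control of $|||\bar U^{(k)}-\hat U|||$, so all the constants propagated through the absorption argument are $p$-uniform as well.
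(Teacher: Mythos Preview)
Your approach is correct and essentially coincides with the paper's. The paper's proof is very terse: it declares that the Lipschitz estimates in \eqref{Equ: Local Lipschitz2Op} follow exactly as in Lemma \ref{Lemma: Energy decreaseOp}, devotes the bulk of the argument to establishing the containment $h_p(U,s)\in B(U^*,\tilde\delta_L)$ via Lemma \ref{Lemma: inexactMidAppOp} (with the exact midpoint $S(1/2)=(g(U,s)+U)/2$ as reference), and then writes ``Similarly'' for the energy decrease. Your explicit treatment of the extra $\mathcal{A}$-mismatch term, i.e.\ the fact that $h_p(U,s)-U=-s\,\mathcal{A}_{\bar U^{(p-1)}}\bar U^{(p)}$ rather than $-s\,\mathcal{A}_{\bar U^{(p)}}\bar U^{(p)}$, is precisely what the paper's ``Similarly'' suppresses; the $\mathcal{A}$-Lipschitz bound in \eqref{Equ: Local Lipschitz2Op} is stated exactly so that this correction can be absorbed.

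One organizational difference: the paper proves the containment \emph{first}, so that every point appearing in the energy estimate is already known to lie in $B(U^*,\tilde\delta_L)$ where the Lipschitz constants apply. You run the energy estimate first and defer containment to the end, which is mildly circular; you correctly flag this in your final paragraph and note that a single choice of $\tilde\delta_s$ must secure both at once. When you write this up, it will be cleaner to follow the paper's order: first show via Lemma \ref{Lemma: inexactMidAppOp} that $\bar U^{(k)}$ lies within $O(s)$ of the exact midpoint (hence in $B(U^*,\tilde\delta_L)$) for all $k$ uniformly in $p$, and only then carry out the mean-value-plus-corrections argument.
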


    \begin{proof}
    We only need to prove that $h_p(U, s)\in B(U^*, \tilde\delta_L)$. Set
    \begin{equation*}
        S(t) = tg(U, s) + (1 - t)U,~t\in[0, 1].
    \end{equation*}
    We obtain from Lemma \ref{Lemma: Energy decreaseOp} that for $U\in B(U^*, \delta_a)$ and $s\in[0,\delta_s]$, there holds
    \begin{equation*}
    \begin{split}
      \Big|\Big|\Big| S\big(\frac12\big) - U^* \Big|\Big|\Big| \leqslant& \frac12 ||| g(U, s) - U^*||| + \frac12||| U - U^*||| \leqslant \frac12\delta_b + \frac12\delta_a,
    \end{split}
    \end{equation*}
    which implies
    \[
    S\big(\frac12\big)\in B(U^*, \frac12\delta_b + \frac12\delta_a)\subset B(U^*, \tilde\delta_L).\]
    Note that Lemma \ref{Lemma: inexactMidAppOp} implies
    \[\bar U^{(p)}(s)\in B\Big(S\big(\frac12\big), \frac12(\tilde\delta_L - \delta_b) \Big) \subset B(U^*, \tilde\delta_L)\]
    provided
    \begin{equation*}
      s\in
      \left\{
      \begin{array}{l}
        \bigg[0, \min\Big\{\frac{2}{\hat L_1\sqrt{N}}, \frac{\tilde\delta_L - \delta_b}{||| \nabla_G E(S(\frac12)) |||}, \delta^*\Big\}\bigg], \quad \big|\big|\big| \nabla_G E\big(S\big(\frac12\big)\big) \big|\big|\big| > 0,\\[0.3cm]
        \bigg[0, \min\Big\{\frac{2}{\tilde L_1\sqrt{N}}, \delta^*\Big\}\bigg], \quad \big|\big|\big| \nabla_G E\big(S\big(\frac12\big)\big) \big|\big|\big| = 0.
      \end{array}
      \right.
    \end{equation*}
    While there holds
    \begin{equation}
        \big|\big|\big| \nabla_G E\big(S\big(\frac12\big)\big) \big|\big|\big| =  \big|\big|\big| \nabla_G E\big(S\big(\frac12\big)\big) - \nabla_G E\big(U^*\big)\big|\big|\big| \leqslant \tilde L_1||| S\big(\frac12\big) - U^* ||| \leqslant \frac12\tilde L_1(\delta_a + \delta_b),
    \end{equation}
    we have
    \[\bar U^{(p)}(s)\in B\Big(S\big(\frac12\big), \frac12(\tilde\delta_L - \delta_b) \Big)\]
    as long as
    \begin{equation*}
      s\in \bigg[0, \min\Big\{\frac{2}{\tilde L_1\sqrt{N}}, \frac{2(\tilde\delta_L - \delta_b)}{\tilde L_1(\delta_a + \delta_b)}, \delta^*\Big\}\bigg].
    \end{equation*}
    Therefore, we get
    \begin{equation*}
    \begin{split}
      &||| h_p(U, s) - U^*||| \leqslant ||| h_p(U, s) - g(U, s)||| + ||| g(U, s) - U^*|||\\
    \leqslant& 2\big|\big|\big| \bar U^{(p)} - S\big(\frac12\big)\big|\big|\big| + \delta_b \leqslant \tilde\delta_L.
    \end{split}
    \end{equation*}

    Similarly, we have
    \begin{equation}
      \begin{split}
        &E(U) - E(h_p(U, s)) \geqslant s\Big(\frac{1}{2N} - \frac{s\tilde L}{2}\Big)\Big|\Big|\Big| \nabla_G E\Big(\frac{h_p(U,s) + E(U)}{2}\Big)\Big|\Big|\Big|^2.
      \end{split}
    \end{equation}
    All the above results hold when $s\in[0,\tilde\delta_s]$, where
    \begin{equation*}
    \tilde\delta_s = \min\Big\{\frac{2}{\tilde L_1\sqrt{N}}, \frac{2(\tilde\delta_L - \delta_b)}{\tilde L_1(\delta_a + \delta_b)}, \frac1{2N\tilde L},\delta^*\Big\}.
    \end{equation*}

    \end{proof}

    We can define a mapping
    \begin{equation*}
          \hat h_p: B\big([U^*], \delta_a\big)\times[0, \tilde\delta_s] \rightarrow B\big([U^*], \tilde\delta_L\big)
    \end{equation*}
    such that
    \begin{equation*}
      \hat h_p\big([U], s\big) = \Big[h_p\big(\arg\min\limits_{\tilde U\in [U]} ||| \tilde U - U^* |||, s\big)\Big].
    \end{equation*}

    \begin{lemma}\label{Lemma: inexactSelf-mappingOp}
      There holds
      \begin{equation*}
          \hat h_p\Big(B\big([U^*], \tilde\delta_e\big)\bigcap\mathcal{L}_{\tilde E_e}\times[0, \tilde\delta_T] \Big)\subset B([U^*], \tilde\delta_e)\bigcap\mathcal{L}_{\tilde E_e}
      \end{equation*}
      for some $\tilde\delta_e > 0$, $\tilde E_e\in\mathbb{R}$, $\tilde\delta_T \in [0, \tilde\delta_s]$ where $\tilde\delta_s$ is defined in Lemma \ref{Lemma: inexactEnergyDecreaseOp}.
    \end{lemma}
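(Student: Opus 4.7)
The plan is to mirror the proof of Lemma \ref{Lemma: Self-mappingOp}, replacing the implicit midpoint map $g$ by the inexact iterate map $h_p$ and leveraging Lemma \ref{Lemma: inexactEnergyDecreaseOp} in place of Lemma \ref{Lemma: Energy decreaseOp}. First I would set $\tilde\delta_e = \min\{\delta_a, \tfrac{1}{2}\delta_c\}$, define
\begin{equation*}
\tilde E_c = \min\big\{E(\tilde U) : [\tilde U]\in \overline{B([U^*],\delta_c)\setminus B([U^*],\tilde\delta_e)}\big\},
\end{equation*}
and put $\tilde E_e = (\tilde E_c + E(U^*))/2$. By uniqueness of the critical point in $B([U^*],\delta_c)$ we get $\tilde E_c > E(U^*)$, so the same observation as in Lemma \ref{Lemma: Self-mappingOp} applies: any $[\tilde U]\in B([U^*],\delta_c)$ with $E(\tilde U)\le \tilde E_e$ automatically lies in $B([U^*],\tilde\delta_e)$.

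Next I would fix $[U]\in B([U^*],\tilde\delta_e)\cap \mathcal{L}_{\tilde E_e}$ and $s\in[0,\tilde\delta_s]$, pick the representative $U$ minimizing $|||\tilde U-U^*|||$ over $[U]$, and apply Lemma \ref{Lemma: inexactEnergyDecreaseOp} to conclude $h_p(U,s)\in B(U^*,\tilde\delta_L)$ together with the energy descent $E(h_p(U,s)) \le E(U)\le \tilde E_e$. The whole remaining task is then to ensure $\textup{dist}([h_p(U,s)],[U^*])\le \delta_c$, since once $\hat h_p([U],s)\in B([U^*],\delta_c)\cap\mathcal{L}_{\tilde E_e}$, the definition of $\tilde E_e$ immediately forces $\hat h_p([U],s)\in B([U^*],\tilde\delta_e)\cap\mathcal{L}_{\tilde E_e}$.

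To obtain the distance bound I would write $h_p(U,s)-U = 2(\bar U^{(p)}-U)$ and introduce the exact midpoint $U_{1/2}$ solving (\ref{Equ: Midpoint IntermediateOp}) with initial $U$. Lemma \ref{Lemma: inexactMidAppOp}, applied on a neighborhood of $U_{1/2}$ contained in $B(U^*,\tilde\delta_L)$ (possible because $U_{1/2}\in B(U^*,\tfrac{1}{2}(\delta_a+\delta_b))$ and $\tilde\delta_L > \max\{\delta_a,\delta_b\}$), yields $|||\bar U^{(p)}-U_{1/2}|||\le C_1 s$ with $C_1$ depending only on the Lipschitz constant $\tilde L_1$ in (\ref{Equ: Local Lipschitz2Op}). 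Combined with $|||U_{1/2}-U|||=\tfrac{s}{2}|||\mathcal{A}_{U_{1/2}}U_{1/2}|||$, which is uniformly bounded on $B(U^*,\tilde\delta_L)\cap\mathcal{M}^N$, this gives $|||h_p(U,s)-U|||\le C_2 s$ for a constant $C_2$ independent of $U$ and $s$. Hence
\begin{equation*}
\textup{dist}([h_p(U,s)],[U^*])\le |||h_p(U,s)-U|||+\textup{dist}([U],[U^*])\le C_2 s+\tilde\delta_e,
\end{equation*}
and taking
\begin{equation*}
\tilde\delta_T=\min\Big\{\tilde\delta_s,\ \frac{\delta_c-\tilde\delta_e}{C_2}\Big\}
\end{equation*}
puts $\hat h_p([U],s)$ inside $B([U^*],\delta_c)$ for every $s\in[0,\tilde\delta_T]$, closing the argument.

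The main obstacle is to verify that $C_2$ is genuinely uniform in $U\in B([U^*],\tilde\delta_e)\cap\mathcal{L}_{\tilde E_e}$ and in $s\in[0,\tilde\delta_s]$. This requires revisiting Lemma \ref{Lemma: inexactMidAppOp}: its constant depends on the local Lipschitz modulus of $\mathcal{A}_U$ and on the radius $\delta_r$ of the ball around the exact midpoint, both of which must be chosen uniformly in the starting point $U$. Since (\ref{Equ: Local Lipschitz2Op}) provides a single Lipschitz constant $\tilde L_1$ valid on all of $B(U^*,\tilde\delta_L)$, and since $\tilde\delta_L$ strictly exceeds $\max\{\delta_a,\delta_b\}$, one may take $\delta_r=\tfrac{1}{2}(\tilde\delta_L-\max\{\delta_a,\delta_b\})$ once and for all; the iteration argument then converges with a common contraction factor, yielding a uniform $C_1$ and, consequently, a uniform $C_2$. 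This is the step at which the hypothesis $\tilde\delta_L>\max\{\delta_a,\delta_b\}$ embedded in (\ref{Equ: inexactLocalLipschitzOp}) is essential.
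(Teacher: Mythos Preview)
Your proposal is correct and follows exactly the template the paper intends: the paper states this lemma without proof, implicitly deferring to the argument of Lemma \ref{Lemma: Self-mappingOp} with $g$ replaced by $h_p$ and Lemma \ref{Lemma: Energy decreaseOp} replaced by Lemma \ref{Lemma: inexactEnergyDecreaseOp}, which is precisely what you do. Your choice of $\tilde\delta_e$, $\tilde E_c$, $\tilde E_e$ and the reduction to a distance bound $\textup{dist}([h_p(U,s)],[U^*])\le\delta_c$ all match the exact-scheme proof verbatim.

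The only place where your argument diverges slightly from the most direct analogue is the step-size bound. In Lemma \ref{Lemma: Self-mappingOp} the paper writes $|||g(U,s)-U|||=s\,|||\mathcal{A}_{S(1/2)}S(1/2)|||$ and bounds the right-hand side via the Lipschitz estimate for $\nabla_G E$ and $\nabla_G E(U^*)=0$. The identical manoeuvre is available here: from the iteration one has $h_p(U,s)-U=-s\,\mathcal{A}_{\bar U^{(p-1)}}\bar U^{(p)}$, and since Lemma \ref{Lemma: inexactEnergyDecreaseOp} already places every $\bar U^{(k)}$ inside $B(U^*,\tilde\delta_L)$, the factor $|||\mathcal{A}_{\bar U^{(p-1)}}\bar U^{(p)}|||$ is bounded uniformly by $\tilde L_1\tilde\delta_L$ (or a similar constant) without any detour through the exact midpoint or a second invocation of Lemma \ref{Lemma: inexactMidAppOp}. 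Your route via $U_{1/2}$ and Lemma \ref{Lemma: inexactMidAppOp} is perfectly valid and your uniformity discussion is sound, but it is a bit more laborious than necessary; the direct bound keeps the proof a line-for-line copy of Lemma \ref{Lemma: Self-mappingOp}.
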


    Then comparing with the midpoint scheme case in Theorem \ref{Theo: MainConvergeOp}, we arrive at the following convergence result. Since the proof is similar, we omit the details.

    \begin{theorem}\label{Theo: inexactMainConvergeOp}
       If $[U_{0}]\in B\big([U^*], \tilde\delta_e\big)$ and $\sup\{\Delta t_n : n \in \mathbb{N}\} \leqslant \tilde\delta_T$, then for any $p_n\in\mathbb{N}_+$, the sequence $\{U_n\}$ produced by Algorithm \ref{Alg: Self-consistent} satisfies
      \begin{eqnarray*}
        \lim\limits_{n\rightarrow\infty} ||| \nabla_{G} E(U_{n}) ||| = 0,\\
        \lim\limits_{n\rightarrow\infty} E(U_{n}) = E(U^*),\\
        \lim\limits_{n\rightarrow\infty} \textup{dist}\big([U_{n}], [U^*]\big) = 0,
      \end{eqnarray*}
      where $\tilde\delta_e$, $\tilde\delta_T$ are defined in Lemma \ref{Lemma: inexactSelf-mappingOp}.
    \end{theorem}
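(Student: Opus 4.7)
The plan is to mirror the proof of Theorem~\ref{Theo: MainConvergeOp}, replacing the exact midpoint $U_{n+1\slash 2}$ by the inner-loop output $U_{n+1\slash 2}^{(p_n)}$ and using Lemmas~\ref{Lemma: inexactMidAppOp}, \ref{Lemma: inexactEnergyDecreaseOp} and \ref{Lemma: inexactSelf-mappingOp} to absorb the extra approximation error of the inexact solve. First, since $[U_0]\in B([U^*],\tilde\delta_e)\cap\mathcal{L}_{\tilde E_e}$ and $\Delta t_n\leqslant\tilde\delta_T$, Lemma~\ref{Lemma: inexactSelf-mappingOp} keeps $[U_n]$ inside the compact set $B([U^*],\tilde\delta_e)\cap\mathcal{L}_{\tilde E_e}$ for every $n$. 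Noting $U_{n+1}=h_{p_n}(U_n,\Delta t_n)$ with $(U_n+U_{n+1})/2=U_{n+1\slash 2}^{(p_n)}$, Lemma~\ref{Lemma: inexactEnergyDecreaseOp} gives the descent
\begin{equation*}
E(U_n)-E(U_{n+1})\geqslant \frac{\Delta t_n}{4N}\Big|\Big|\Big|\nabla_G E\big(U_{n+1\slash 2}^{(p_n)}\big)\Big|\Big|\Big|^2,
\end{equation*}
so $\{E(U_n)\}$ is monotonically decreasing and bounded below; summing and using $\sum_n\Delta t_n=+\infty$ (from $t_n\to\infty$) yields $\liminf_n|||\nabla_G E(U_{n+1\slash 2}^{(p_n)})|||=0$.

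The only genuinely new step is transferring this liminf from $U_{n+1\slash 2}^{(p_n)}$ back to $U_n$. Starting from the defining identity
\begin{equation*}
U_{n+1\slash 2}^{(p_n)}-U_n=-\frac{\Delta t_n}{2}\,\mathcal{A}_{U_{n+1\slash 2}^{(p_n-1)}}U_{n+1\slash 2}^{(p_n)},
\end{equation*}
I would split
\begin{equation*}
\mathcal{A}_{U_{n+1\slash 2}^{(p_n-1)}}U_{n+1\slash 2}^{(p_n)}=\nabla_G E\big(U_{n+1\slash 2}^{(p_n)}\big)+\big(\mathcal{A}_{U_{n+1\slash 2}^{(p_n-1)}}-\mathcal{A}_{U_{n+1\slash 2}^{(p_n)}}\big)U_{n+1\slash 2}^{(p_n)},
\end{equation*}
bound the second term by $\tilde L_1\,\alpha\,|||U_{n+1\slash 2}^{(p_n)}-U_{n+1\slash 2}^{(p_n-1)}|||$ via the operator Lipschitz estimate in \eqref{Equ: Local Lipschitz2Op}, and control the inner increment by unrolling the fixed-point iteration (using $\|(I+\tfrac{s}{2}\mathcal{A}_\cdot)^{-1}\|\leqslant 1$, as in the proof of Lemma~\ref{Lemma: inexactMidAppOp}) to obtain $|||U_{n+1\slash 2}^{(p_n)}-U_{n+1\slash 2}^{(p_n-1)}|||\leqslant\tfrac{\Delta t_n}{2}|||\nabla_G E(U_n)|||$ for any $p_n\geqslant 1$. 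Feeding this back through Lipschitz continuity of $\nabla_G E$ and absorbing the $|||\nabla_G E(U_n)|||$ term to the left produces, for $\tilde\delta_T$ sufficiently small, a multiplicative comparison
\begin{equation*}
|||\nabla_G E(U_n)|||\leqslant C(\tilde\delta_T)\,\big|\big|\big|\nabla_G E\big(U_{n+1\slash 2}^{(p_n)}\big)\big|\big|\big|,
\end{equation*}
from which $\liminf_n|||\nabla_G E(U_n)|||=0$ is immediate.

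With this bridging estimate in hand the remainder is essentially verbatim from Theorem~\ref{Theo: MainConvergeOp}: compactness furnishes a subsequence $U_{n_k}\to\bar U$ with $|||\nabla_G E(U_{n_k})|||\to 0$; continuity of $\nabla_G E$ gives $\nabla_G E(\bar U)=0$; uniqueness of the critical point in $B([U^*],\delta_c)$ forces $[\bar U]=[U^*]$; monotonicity of $E(U_n)$ combined with $E(U_{n_k})\to E(U^*)$ yields $\lim_n E(U_n)=E(U^*)$; a standard compactness-and-uniqueness contradiction argument then delivers $\lim_n\textup{dist}([U_n],[U^*])=0$; and picking representatives $U_nP_n$ realizing the distance together with continuity of $\nabla_G E$ concludes $\lim_n|||\nabla_G E(U_n)|||=0$. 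The main obstacle is precisely the bridging estimate of the previous paragraph: where the exact scheme enjoys the clean identity $U_{n+1}-U_n=-\Delta t_n\nabla_G E(U_{n+1\slash 2})$ that directly ties the iterate jump to the midpoint gradient, the inexact iteration yields the hybrid quantity $\mathcal{A}_{U_{n+1\slash 2}^{(p_n-1)}}U_{n+1\slash 2}^{(p_n)}$ in which the operator and the point are evaluated at different inner iterates, and closing this mismatch without losing the liminf is what forces both the contraction property of the inner solve and the smallness constraint on $\tilde\delta_T$.
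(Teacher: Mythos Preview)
Your proposal is correct and, in fact, supplies the one piece of argument that the paper glosses over: the paper simply states that the proof is ``similar'' to Theorem~\ref{Theo: MainConvergeOp} and omits all details. Your outline follows Theorem~\ref{Theo: MainConvergeOp} faithfully for the energy-descent, compactness, uniqueness, and representative steps, and you correctly isolate the one genuinely new obstacle, namely that in the inexact scheme the iterate jump is governed by the hybrid quantity $\mathcal{A}_{U_{n+1/2}^{(p_n-1)}}U_{n+1/2}^{(p_n)}$ rather than by $\nabla_G E(U_{n+1/2}^{(p_n)})$. Your bridging estimate --- splitting off the operator mismatch, bounding it via the Lipschitz estimate~\eqref{Equ: Local Lipschitz2Op}, and controlling the inner increment $|||U_{n+1/2}^{(p_n)}-U_{n+1/2}^{(p_n-1)}|||$ by the contraction of the fixed-point iteration --- is the right way to close this, and the absorption works because $\tilde\delta_T\leqslant\tilde\delta_s\leqslant 2/(\tilde L_1\sqrt{N})$ already forces the absorbed coefficient to be strictly less than $1$ for $N\geqslant 2$ (and for $N=1$ the other terms in the definition of $\tilde\delta_s$ take over), so no further shrinking of $\tilde\delta_T$ beyond what Lemma~\ref{Lemma: inexactSelf-mappingOp} provides is actually needed. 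One small point worth making explicit when you write this up: the contraction bound on the inner increments relies on all $U_{n+1/2}^{(k)}$ remaining in the Lipschitz ball $B(U^*,\tilde\delta_L)$, which is precisely what the induction inside Lemma~\ref{Lemma: inexactMidAppOp} (as invoked in the proof of Lemma~\ref{Lemma: inexactEnergyDecreaseOp}) guarantees.
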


    Finally, we turn to the convergence rate of the approximations produced by Algorithm \ref{Alg: Self-consistent}.
     \begin{lemma}\label{Lemma: inexactHessAppr}
      For $U\in B(U^*, \min\{\delta_3, \delta_a\})\bigcap\mathcal{M}^N$ and $\tau\in(0, \tilde\delta_T]$, set
      \begin{equation}
        \begin{split}
          U_+ = \Big(I + \frac{\tau}{2}\mathcal{A}_{U_+}\Big)^{-1}U,\\
          U_- = \Big(I - \frac{\tau}{2}\mathcal{A}_{U_-}\Big)^{-1}U.\\
        \end{split}
      \end{equation}
      If \eqref{Equ: Hess} holds true, then there exists some $\tilde\delta_{r_1} > 0$ such that
      \begin{equation}
      \begin{split}
        \textup{tr}\Big( \big\langle \big(U_{+} - U_{-}\big)^\top \big(\nabla_G E(U_{+}) - \nabla_G E(U_{-})\big) \big\rangle\Big) \geqslant \frac{\sigma}{2} ||| U_{+} - U_{-}|||^2
      \end{split}
      \end{equation}
      for all $\tau\in(0,\tilde\delta_{r_1}]$ and $U\in B(U^*, \min\{\delta_3, \delta_a\})\bigcap\mathcal{M}^N$, where $\tilde\delta_{r_1}\in(0,\tilde\delta_T]$ is a positive constant, $\tilde\delta_T$ is defined in Theorem \ref{Theo: inexactMainConvergeOp} and $\delta_a$ is defined in Lemma \ref{Lemma: Implicit Function TheoryOp}.
    \end{lemma}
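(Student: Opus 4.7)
The plan is to imitate the proof of Lemma \ref{Lemma: HessAppr} almost verbatim, since the defining implicit equations for $U_+$ and $U_-$ are identical and only the admissible range of $\tau$ has changed (from $(0,\delta_T]$ to $(0,\tilde\delta_T]$), which is irrelevant for a claim that is local in $\tau$ near $0$. The starting point is to multiply out $(I\pm\tfrac{\tau}{2}\mathcal{A}_{U_\pm})U_\pm=U$ and use $\nabla_G E(V)=\mathcal{A}_V V$ to obtain the two identities $U_+-U=-\tfrac{\tau}{2}\nabla_G E(U_+)$ and $U_--U=+\tfrac{\tau}{2}\nabla_G E(U_-)$. Subtracting these yields $U_+-U_-$ as a scalar multiple of $\tau\bigl(\nabla_G E(U_+)+\nabla_G E(U_-)\bigr)$, which reproduces the identity \eqref{Equ: Explain0} and converts the assertion into a statement about a quadratic form in $\nabla_G E(U_\pm)$.

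Next I would analyze the ratio
\[
\frac{\textup{tr}\bigl\langle(U_+-U_-)^\top\bigl(\nabla_G E(U_+)-\nabla_G E(U_-)\bigr)\bigr\rangle}{|||U_+-U_-|||^2}
\]
in the limit $\tau\to 0^+$. Using the same pointwise expansions leading to \eqref{Equ: Explain1} and \eqref{Equ: Explain2}, and recalling that $U_\pm\to U$ and $(U_\pm-U)/(\mp\tau/2)\to\nabla_G E(U)$, this ratio converges to $\textup{Hess}_G E(U)[\nabla_G E(U),\nabla_G E(U)]/|||\nabla_G E(U)|||^2$, which is bounded below by $\sigma$ by the Hessian coercivity \eqref{Equ: Hess}. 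Since $\sigma>\sigma/2$, for each fixed $U\in B(U^*,\min\{\delta_3,\delta_a\})\cap\mathcal{M}^N$ there is a threshold $\delta_U>0$ below which the ratio strictly exceeds $\sigma/2$.

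Finally, I would upgrade this pointwise statement to a uniform one via compactness, exactly as in Lemma \ref{Lemma: HessAppr}. Define the sets $\mathcal{C}_U$ as before, use the fact that $U\in\mathcal{C}_U$ so they cover $B(U^*,\min\{\delta_3,\delta_a\})\cap\mathcal{M}^N$, extract a finite subcover $\{\mathcal{C}_{U_{(l)}}\}_{l=1}^{\ell}$ (this is where compactness of the closed Stiefel neighborhood in $(V_{N_g})^N$ is used), and set $\tilde\delta_{r_1}=\min\{\delta_{U_{(1)}},\ldots,\delta_{U_{(\ell)}},\tilde\delta_T\}$. The appearance of $\tilde\delta_T$ in the minimum is the only bookkeeping difference from the earlier lemma and ensures $\tilde\delta_{r_1}\in(0,\tilde\delta_T]$ as required.

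I do not anticipate any essential obstacle: every step is a transcription of the corresponding step in the proof of Lemma \ref{Lemma: HessAppr}, and the replacement of $\delta_T$ by $\tilde\delta_T$ does not affect any limit as $\tau\to 0^+$. The only mildly delicate check is that the convergence $\nabla_G E(U_\pm)\to\nabla_G E(U)$ is uniform enough over the compact neighborhood to make the covering argument go through — but this is guaranteed by the Lipschitz bound \eqref{Equ: Local Lipschitz2Op} on $\nabla_G E$ already proved in Lemma \ref{Lemma: inexactEnergyDecreaseOp}.
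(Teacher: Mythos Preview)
Your proposal is correct and matches the paper's approach: the paper gives no explicit proof of this lemma, stating it immediately after the omitted-as-similar Theorem \ref{Theo: inexactMainConvergeOp}, with the clear implication that the argument is a verbatim transcription of the proof of Lemma \ref{Lemma: HessAppr} with $\delta_T$ replaced by $\tilde\delta_T$ in the final minimum. Your reconstruction of that argument (the identity $U_+-U_-=-\tfrac{\tau}{2}\bigl(\nabla_G E(U_+)+\nabla_G E(U_-)\bigr)$, the $\tau\to 0$ limit via \eqref{Equ: Hess}, and the compactness/finite-subcover step) is exactly right.
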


    \begin{lemma}\label{Lemma: inexactGradientConvEst}
    For $U\in B(U^*, \tilde\delta_L)\bigcap\mathcal{M}^N$ and $\tau\in(0, \tilde\delta_T]$, if
      \begin{equation}
        \begin{split}
          U_+ &= \Big(I + \frac{\tau}{2}\mathcal{A}_{U_+}\Big)^{-1}U,\\
          \bar{U}_{+} &= 2U_+ - U,\\
        \end{split}
      \end{equation}
      then there exists some $\tilde\delta_{r_2} > 0$ that satisfies
      \begin{equation}
        E(U) - E(\bar{U}_{+}) \leqslant \frac{\tau(\tilde L + 3)}{2} ||| \nabla_G E(U_{+})|||^2,
      \end{equation}
      for all $\tau\in(0,\tilde\delta_{r_2}]$ and $U\in B(U^*, \tilde\delta_L\})\bigcap\mathcal{M}^N$, where $\tilde\delta_{r_2}\in(0,\tilde\delta_T]$ is a positive constant, $\tilde\delta_T$ is defined in Theorem \ref{Theo: inexactMainConvergeOp} and $\tilde\delta_L$ and $\tilde L$ are defined in Lemma \ref{Lemma: inexactEnergyDecreaseOp}.
    \end{lemma}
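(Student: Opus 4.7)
The plan is to mirror the proof of Lemma \ref{Lemma: GradientConvEst} with the ball $B(U^*,\min\{\delta_a,\delta_b\})$ replaced by the larger ball $B(U^*,\tilde\delta_L)$, using the inexact Lipschitz assumption \eqref{Equ: inexactLocalLipschitzOp} and the constant $\tilde L$ in place of $L$. Observe that $\bar U_+$ is precisely the one-step midpoint update with midpoint $U_+$, so the segment $S(t)=t\bar U_++(1-t)U$ has $S(1/2)=U_+$ and $\bar U_+-U=-\tau\,\nabla_G E(U_+)$. Reproducing the computation that yielded \eqref{Equ: EstimateOp} and invoking \eqref{Equ: Tmp2Op} with the Lipschitz constant $\tilde L$ (valid since both endpoints and the segment lie in $B(U^*,\tilde\delta_L)$ by the reasoning already used in Lemma \ref{Lemma: inexactEnergyDecreaseOp}) gives
\[
E(U)-E(\bar U_+)\le \tau\,\textup{tr}\bigl\langle\nabla E(U_+)^\top\nabla_G E(U_+)\bigr\rangle+\frac{\tau\tilde L}{2}|||\nabla_G E(U_+)|||^2.
\]

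The heart of the proof is then to bound the ratio $R(U,\tau):=\textup{tr}\langle\nabla E(U_+)^\top\nabla_G E(U_+)\rangle/|||\nabla_G E(U_+)|||^2$ by $3/2$ uniformly. For each fixed $U\in B(U^*,\tilde\delta_L)\cap\mathcal M^N$ one has $U_+\to U$ as $\tau\to 0$; since $U\in\mathcal M^N$ and Lemma \ref{Lemma: ZeroTrace} together with \eqref{Equ:GrassmannGradient} give $\textup{tr}\langle\nabla E(U)^\top\nabla_G E(U)\rangle=|||\nabla_G E(U)|||^2$, continuity of $\nabla E$ and of the implicit map $U\mapsto U_+$ force $R(U,\tau)\to 1$ as $\tau\to 0$. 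Hence, exactly as in the proof of Lemma \ref{Lemma: GradientConvEst}, for each $U$ there exist $\hat\delta_U>0$ and a neighborhood $\hat{\mathcal C}_U$ of $U$ on which $R(V,\tau)<3/2$ for all $\tau\in(0,\hat\delta_U]$.

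Finally I would use the compactness of $B(U^*,\tilde\delta_L)\cap\mathcal M^N$ to extract a finite subcover $\hat{\mathcal C}_{U_{(1)}},\dots,\hat{\mathcal C}_{U_{(\hat\ell)}}$ and set $\tilde\delta_{r_2}=\min\{\hat\delta_{U_{(1)}},\dots,\hat\delta_{U_{(\hat\ell)}},\tilde\delta_T\}$, so that on the whole ball the ratio bound $R(U,\tau)\le 3/2$ is uniform for $\tau\in(0,\tilde\delta_{r_2}]$. Plugging this into the displayed inequality above yields $E(U)-E(\bar U_+)\le \frac{\tau(\tilde L+3)}{2}|||\nabla_G E(U_+)|||^2$, which is the claim.

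I expect the main obstacle to be merely bookkeeping: verifying that the auxiliary points $S(t)$ and $U_+$ in fact remain in $B(U^*,\tilde\delta_L)$ so that the Lipschitz constant $\tilde L$ of \eqref{Equ: inexactLocalLipschitzOp} is applicable. This is precisely the purpose of enlarging the radius to $\tilde\delta_L>\max\{\delta_a,\delta_b\}$ and of Lemma \ref{Lemma: inexactMidAppOp}, and for $\tau$ sufficiently small (absorbed into $\tilde\delta_{r_2}$) these containments follow from the bound $|||U_+-U|||\le\tau|||\nabla_G E(U_+)|||/2$ together with continuity, in direct analogy with the arguments already carried out in Lemma \ref{Lemma: inexactEnergyDecreaseOp}. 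No further ideas beyond those in Lemma \ref{Lemma: GradientConvEst} are needed.
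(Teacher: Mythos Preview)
Your proposal is correct and follows exactly the route the paper intends: the paper states this lemma without proof as the $\tilde L,\tilde\delta_L,\tilde\delta_T$-analogue of Lemma~\ref{Lemma: GradientConvEst}, and your argument reproduces that proof verbatim with the enlarged ball and the Lipschitz constant from \eqref{Equ: inexactLocalLipschitzOp}, including the ratio limit, the open-cover construction of $\hat{\mathcal C}_U$, and the compactness extraction of $\tilde\delta_{r_2}$.
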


    \begin{theorem}\label{Theo: inexactConvRate}
      Suppose Hessian coercivity holds true as \eqref{Equ: Hess}. If $[U_{0}]\in B\big([U^*], \tilde\delta_e\big)$ and $\Delta t_n = \tau \leqslant \tilde\delta_{r_1}, \forall n\geqslant \tilde N_0$, then the sequence $\{U_n\}$ produced by Algorithm \ref{Alg: Self-consistent} satisfies
      \begin{equation}
      \begin{split}
        |||\nabla_G E(U_{n})||| \leqslant \Big(1 + \frac{\tilde L_1\tau}{2}\Big)\Big(\frac{4 + \tau^2\tilde L_1^2 - 2\sigma \tau}{4 + \tau^2\tilde L_1^2 + 2\sigma \tau}\Big)^{(n - \tilde N_0 + 1)\slash 2} |||\nabla_G E(U_{\tilde N_0 - 1\slash2})||| , \\
        \forall n \geqslant \tilde N_0,
      \end{split}
      \end{equation}
      where $\tilde N_0$ is a positive integer, $\tilde \delta_e$ and $\tilde \delta_T$ are defined in Lemma \ref{Lemma: inexactSelf-mappingOp}, $\tilde L_1$ is defined in \eqref{Equ: Local Lipschitz2Op} and $\tilde \delta_{r_1}$ is defined in Lemma \ref{Lemma: inexactHessAppr}.

      Moreover, if $\Delta t_n = \tau \leqslant \min\{\tilde \delta_{r_1}, \tilde \delta_{r_2}\}$, $\forall n\geqslant \tilde N_1$, then
      \begin{equation}
      \begin{split}
      &E(U_n) - E(U^*)\\
      \leqslant& \frac{(\tilde L + 3)(4 + \tau^2\tilde L_1^2 + 2\sigma \tau)}{8\sigma} |||\nabla_G E(U_{N_1 - 1\slash2})|||^2 \Big(\frac{4 + \tau^2\tilde L_1^2 - 2\sigma \tau}{4 + \tau^2\tilde L_1^2 + 2\sigma \tau}\Big)^{n - \tilde N_1 + 1}, \forall n \geqslant \tilde N_1,
      \end{split}
      \end{equation}
      where $\tilde N_1\geqslant \tilde N_0$ is a positive integer, $\tilde L$ is defined in \eqref{Equ: inexactLocalLipschitzOp} and $\tilde \delta_{r_2}$ is defined in Lemma \ref{Lemma: inexactGradientConvEst}.
    \end{theorem}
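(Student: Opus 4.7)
The plan is to follow the proof of Theorem \ref{Theo: ConvRate} essentially verbatim, substituting the tilde versions of each supporting lemma. First, I would invoke Theorem \ref{Theo: inexactMainConvergeOp} to conclude $\textup{dist}([U_n], [U^*])\to 0$, which furnishes $\tilde N_0$ such that $U_n\in B(U^*, \min\{\delta_3, \delta_a\})\cap\mathcal{M}^N$ for all $n\geq \tilde N_0$, and $\tilde N_1\geq \tilde N_0$ such that additionally $U_n\in B(U^*, \tilde\delta_L)\cap\mathcal{M}^N$ for all $n\geq \tilde N_1$. This places the tail of the sequence in the regime where Lemma \ref{Lemma: inexactHessAppr} and Lemma \ref{Lemma: inexactGradientConvEst} are applicable and where the exact implicit midpoint produced by Lemma \ref{Lemma: Implicit Function TheoryOp} is well-defined.

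Next, for the gradient-norm recursion, starting from the midpoint identity $U_{n+1} - U_n = -\tau\,\nabla_G E(U_{n+1/2})$, I would write
\begin{equation*}
\tau|||\nabla_G E(U_{n+1/2})|||^2 - \tau|||\nabla_G E(U_{n-1/2})|||^2 = -2\,\textup{tr}\bigl\langle (U_{n+1/2} - U_{n-1/2})^\top (\nabla_G E(U_{n+1/2}) - \nabla_G E(U_{n-1/2}))\bigr\rangle,
\end{equation*}
apply the parallelogram identity together with the Lipschitz estimate \eqref{Equ: Local Lipschitz2Op} to obtain
\begin{equation*}
|||U_{n+1/2} - U_{n-1/2}|||^2 \geq \frac{2\tau^2}{4 + \tau^2\tilde L_1^2}\bigl(|||\nabla_G E(U_{n+1/2})|||^2 + |||\nabla_G E(U_{n-1/2})|||^2\bigr),
\end{equation*}
and bound the trace on the right from below by $(\sigma/2)|||U_{n+1/2} - U_{n-1/2}|||^2$ via Lemma \ref{Lemma: inexactHessAppr}. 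Combining these three ingredients yields the recursion
\begin{equation*}
|||\nabla_G E(U_{n+1/2})|||^2 \leq \frac{4 + \tau^2\tilde L_1^2 - 2\sigma\tau}{4 + \tau^2\tilde L_1^2 + 2\sigma\tau}\,|||\nabla_G E(U_{n-1/2})|||^2,\qquad n\geq \tilde N_0.
\end{equation*}
Iterating this geometric recursion from $\tilde N_0$, then passing from $U_{n-1/2}$ back to $U_n$ via \eqref{Equ: Local Lipschitz2Op} (which produces the prefactor $1 + \tilde L_1\tau/2$) gives the first stated inequality.

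For the energy estimate, I would apply Lemma \ref{Lemma: inexactGradientConvEst} to each step and then telescope:
\begin{equation*}
E(U_n) - E(U^*) = \sum_{k=n}^{\infty}\bigl(E(U_k) - E(U_{k+1})\bigr) \leq \sum_{k=n}^{\infty} \frac{\tau(\tilde L+3)}{2}\,|||\nabla_G E(U_{k+1/2})|||^2,
\end{equation*}
where $E(U_k)\to E(U^*)$ comes from Theorem \ref{Theo: inexactMainConvergeOp}. Substituting the exponential decay already established for $|||\nabla_G E(U_{k+1/2})|||^2$ into this sum and summing the geometric series produces the stated prefactor $(\tilde L + 3)(4 + \tau^2\tilde L_1^2 + 2\sigma\tau)/(8\sigma)$.

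The main technical obstacle is that Algorithm \ref{Alg: Self-consistent} only produces the inexact iterate $U_{n+1/2}^{(p_n)}$, not the exact implicit midpoint $U_{n+1/2}$, while Lemma \ref{Lemma: inexactHessAppr} is phrased for the exact fixed point of $U_+ = (I + (\tau/2)\mathcal{A}_{U_+})^{-1}U$. The resolution is Lemma \ref{Lemma: inexactMidAppOp}, which controls $|||U_{n+1/2}^{(p_n)} - U_{n+1/2}|||$ at order $\tau$; one either keeps $U_{n+1/2}$ as an auxiliary object throughout the argument and absorbs the $O(\tau)$ discrepancy in the final triangle/Lipschitz step via \eqref{Equ: Local Lipschitz2Op}, or reruns the recursion directly on the iterates with slightly perturbed but equivalent constants. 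Carrying out this bookkeeping without degrading the rate beyond the stated $(4 + \tau^2\tilde L_1^2 \pm 2\sigma\tau)$ factors is the only adaptation beyond the proof of Theorem \ref{Theo: ConvRate}.
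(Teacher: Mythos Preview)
Your proposal is correct and takes essentially the same approach as the paper, which omits the proof of Theorem~\ref{Theo: inexactConvRate} entirely and implicitly treats it as a verbatim analogue of Theorem~\ref{Theo: ConvRate} with the tilde-decorated constants and Lemmas~\ref{Lemma: inexactHessAppr}--\ref{Lemma: inexactGradientConvEst} substituted in. Your flagging of the exact-versus-inexact midpoint discrepancy and the proposed use of Lemma~\ref{Lemma: inexactMidAppOp} to control it is in fact more careful than what the paper spells out.
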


    \begin{rema}
    Similarly, Algorithm \ref{Alg: Self-consistent} reaches the optimal convergence rate when
    \begin{equation}
      \tau = \left\{
      \begin{array}{ll}
        \min\{\delta_{r_1}, \delta_{r_2}\},   &\quad \displaystyle\frac{2}{\tilde L_1} > \min\{\delta_{r_1}, \delta_{r_2}\},\\
        \displaystyle\frac{2}{\tilde L_1},    &\quad \displaystyle\frac{2}{\tilde L_1} \leqslant \min\{\delta_{r_1}, \delta_{r_2}\}.
      \end{array}
      \right.
    \end{equation}
    Furthermore, if $U_{k + 1\slash2} \neq U_{k - 1\slash2}$ for some $k \geqslant \tilde N_0$, then we have $\tilde L_1\geqslant{\sigma}\slash{2}$ and convergence rate of the approximations produced by Algorithm \ref{Alg: Self-consistent} can approach $0$ given proper assumptions in theory.
    \end{rema}

    Compared with Algorithm \ref{Alg: Midpoint}, Algorithm \ref{Alg: Self-consistent} is computable. In particular, Algorithm \ref{Alg: Self-consistent} does not require a large band gap and  Theorem \ref{Theo: inexactConvRate} tells the convergence rate of the orthogonality preserving iterations.

    \section{Numerical experiments}
    Our code of the orthogonality preserving iterations of the gradient flow based model is developed based on by PHG toolbox\cite{PHG}. We adopt quadratic finite elements in the spacial discretization. For the exchange-correlation potential, we choose the local density approximation(LDA) in \cite{LDA}:
    \begin{equation}
      v_{xc}(\rho) = \varepsilon_{xc}(\rho) + \rho\frac{\delta\varepsilon_{xc}(\rho)}{\delta\rho},
    \end{equation}
    where $\varepsilon_{xc}(\rho) = \varepsilon_x(\rho) + \varepsilon_c(\rho)$ with
    \begin{equation}
      \varepsilon_x(\rho) = -\frac34 (\frac{3}{\pi})^{1\slash3}\rho^{1\slash3}
    \end{equation}
    and
    \begin{equation}
      \varepsilon_c(\rho) =
      \left\{
      \begin{array}{l}
        -0.1423/(1 + 1.0529\sqrt{r_s} + 0.3334r_s) \:\textup{if}\: r_s \geqslant 1,\\
        0.0311\ln r_s -0.048 + 0.0020 r_s \ln r_s -0.0116 r_s \:\textup{if}\: r_s < 1,
      \end{array}
      \right.
    \end{equation}
    here $r_s = \big({3}\slash(4\pi\rho)\big)^{1\slash 3}$. We see from Theorem \ref{Theo: inexactMainConvergeOp} that the approximations produced by Algorithm \ref{Alg: Self-consistent} is convergent given proper $\tilde\delta_t$ and $\tilde\delta_T$. In implementation of Algorithm \ref{Alg: Self-consistent}, we apply some self-adapted time step sizes and some acceleration techniques.

    We give four examples whose molecular structures can be found in Figure \ref{Fig: Structure}.

    \begin{figure}
    \centering
      \includegraphics[width = 0.49\textwidth]{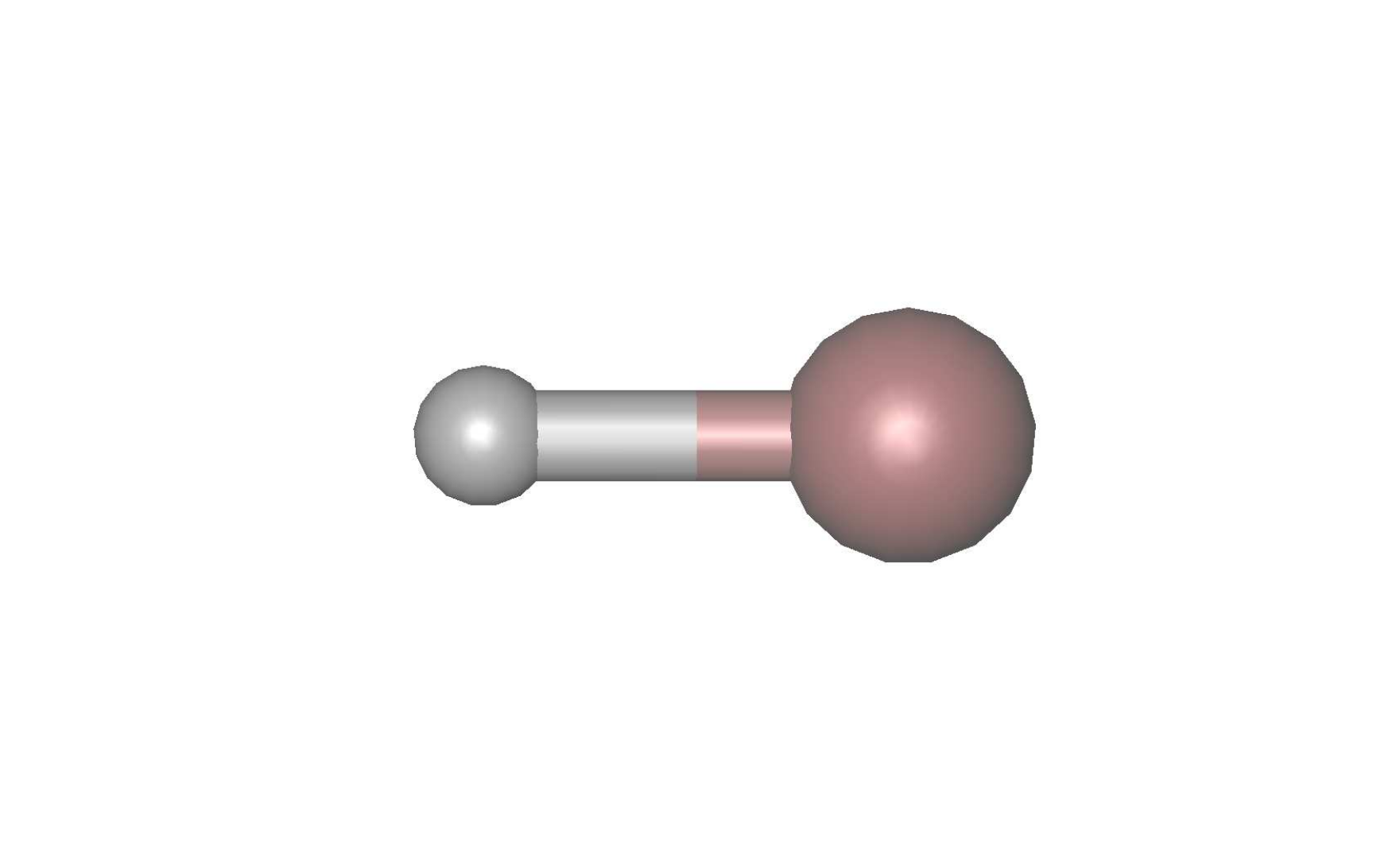}
      \includegraphics[width = 0.49\textwidth]{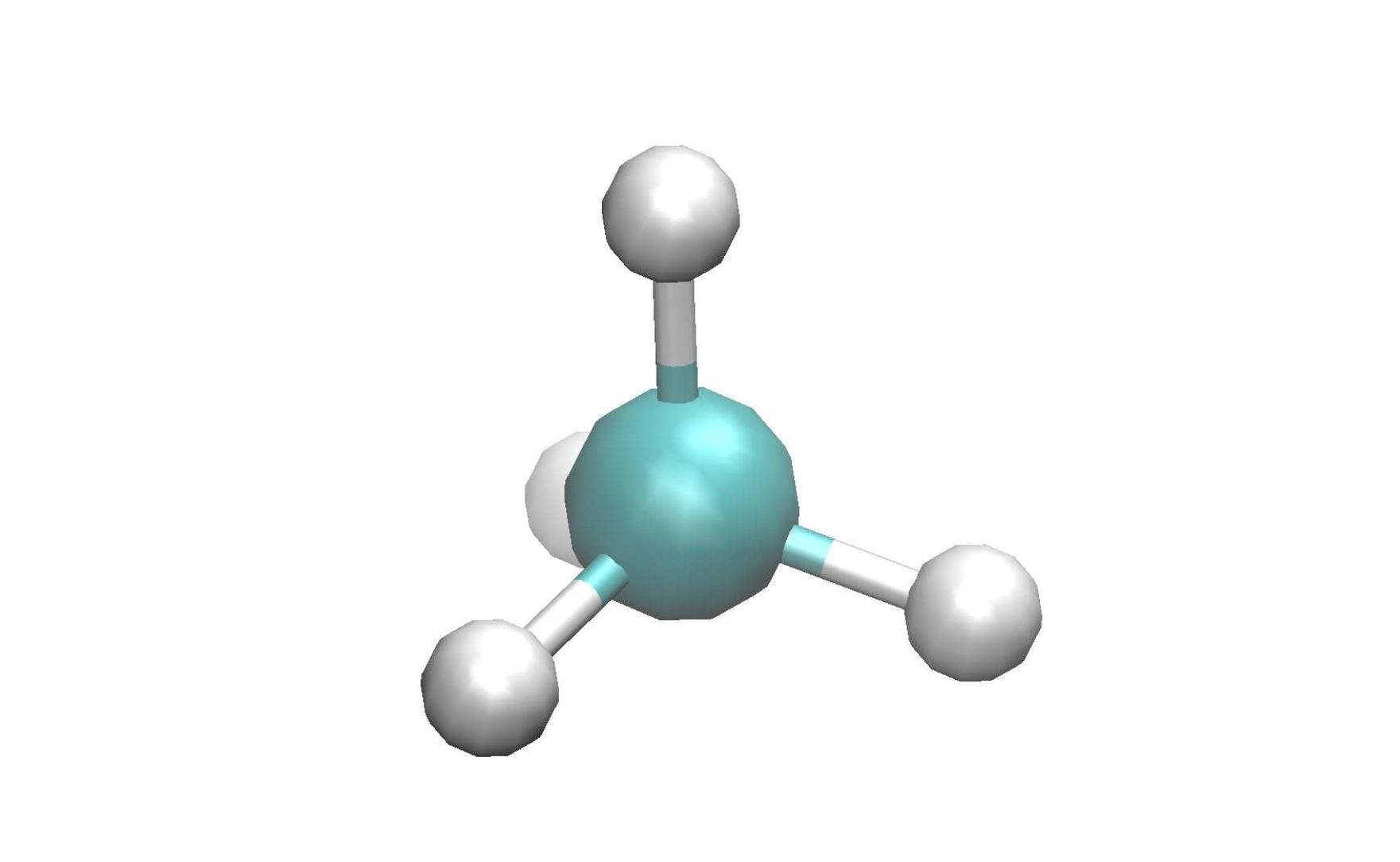}
      \includegraphics[width = 0.49\textwidth]{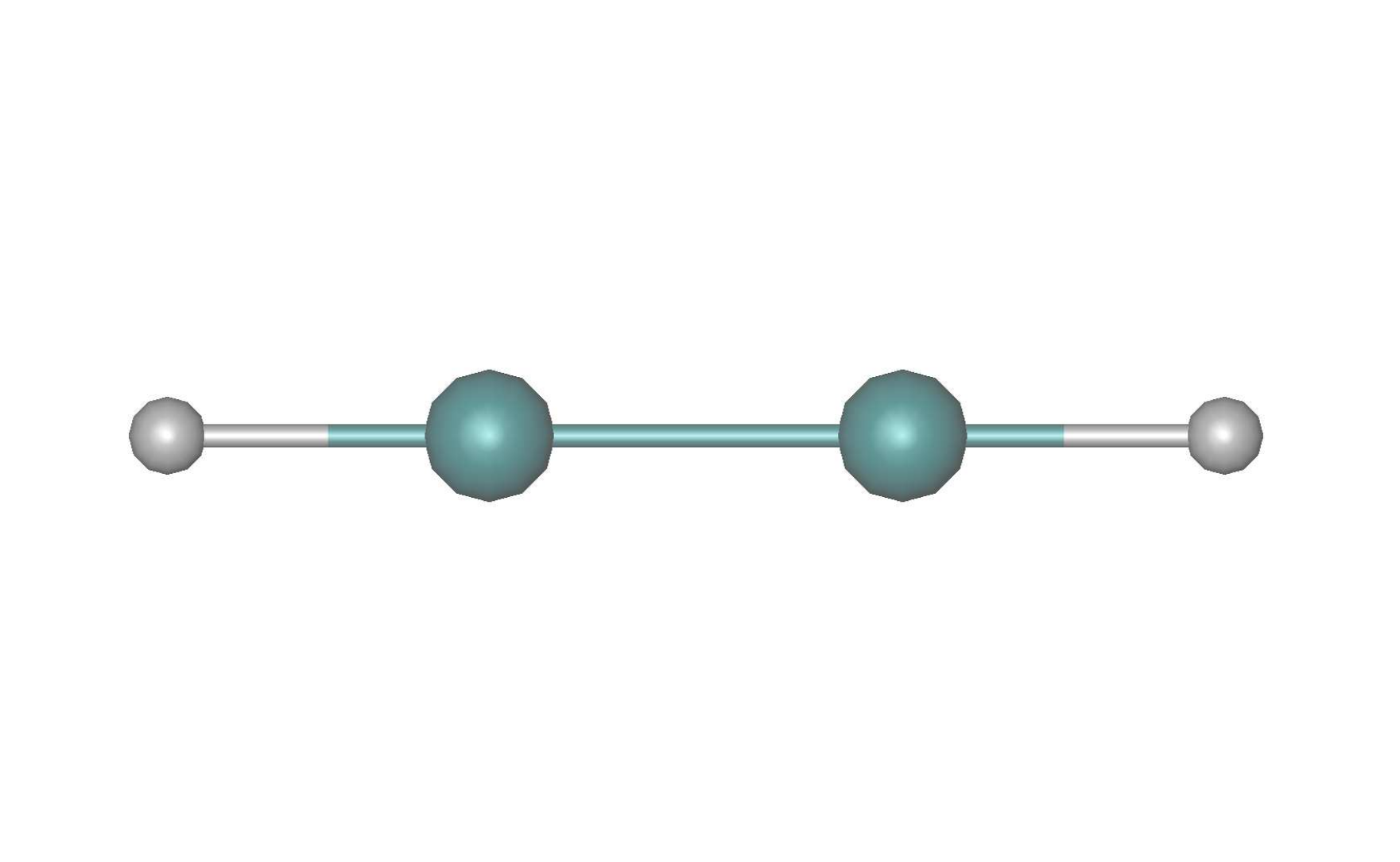}
      \includegraphics[width = 0.49\textwidth]{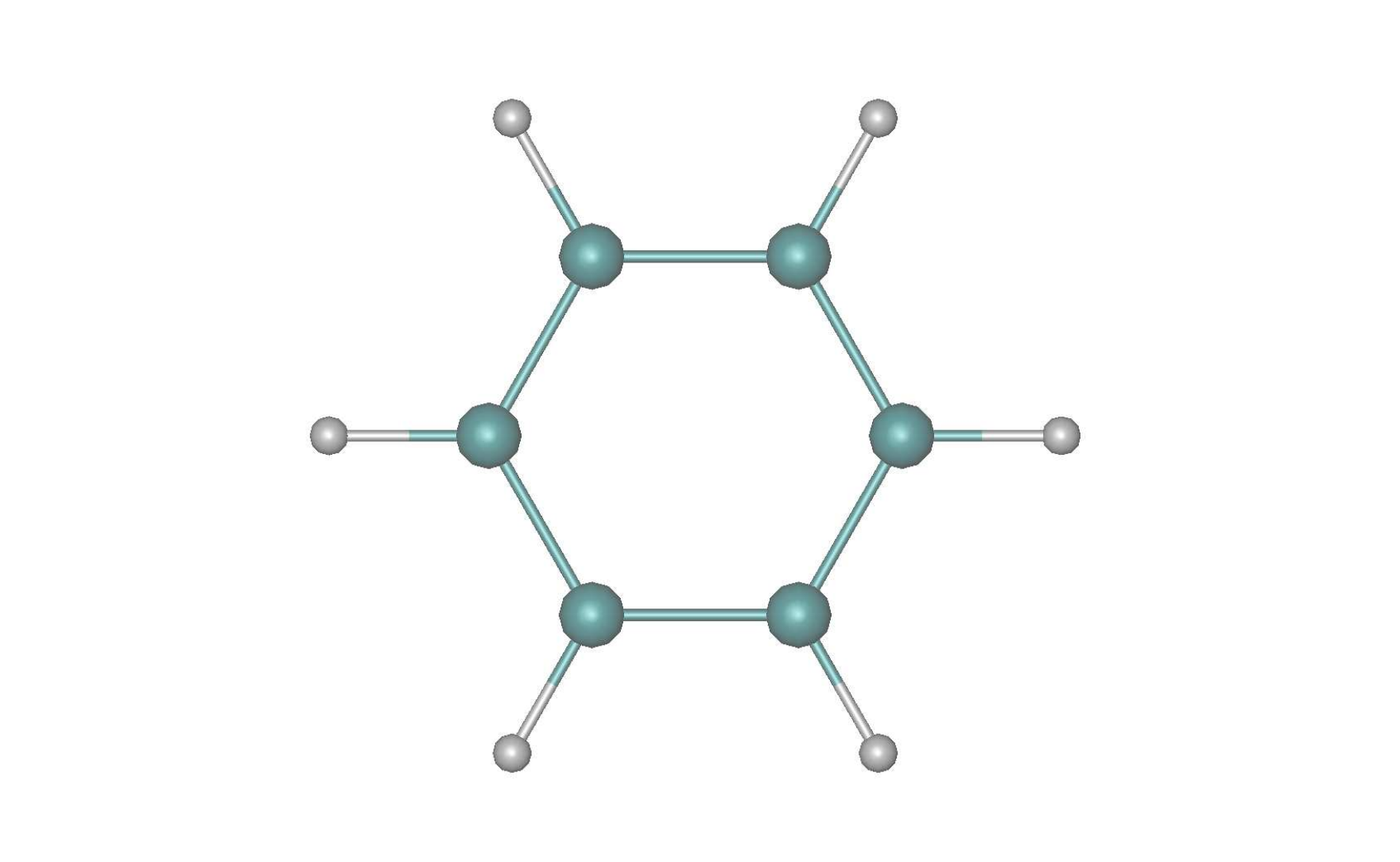}
      \caption{Molecular structures(Row 1 Column 1: LiH; Row 1 Column 2: CH\textsubscript{4}; Row 2 Column 1: C\textsubscript{2}H\textsubscript{2}; Row 2 Column 2: C\textsubscript{6}H\textsubscript{6}).}\label{Fig: Structure}
    \end{figure}

    \emph{Example} 1. Consider the gradient flow model for lithium hydride(LiH) with orbits number $N = 2$ on a fixed tetrahedral finite element mesh over $[-56, 55]\times [-54, 53]\times[-54, 53] \subset\mathbb{R}^3$ from an adaptive refinement finite element method\cite{AFEM} with degrees of freedom $N_g = 10971$(see Figure \ref{Fig: LiHPlus}). We see from Figure \ref{Fig: LiHPlus} that the approximations of electron density between the two nuclei converge. Figure \ref{Fig: LiH} shows the energy and the gradient convergence curve. We see that the energy approximations converge monotonically and the approximations of the gradient oscillate to zero.

    Moreover, the approximated energy of the ground state of LiH we obtain is \\$-7.990787295248$ a.u., which closes to the experimental value $-8.0705$ a.u. in \cite{LiH1} and also consistent with the numerical result $-8.044572$ a.u. \cite{dai2008three} and other numerical results in \cite{LiH1, LiH2}. The minor ground state energy difference results from spacial discretization, boundary condition approximation and precision of LDA model of exchange-correlation term.

    \begin{figure}
    \centering
      \includegraphics[width = 0.49\textwidth]{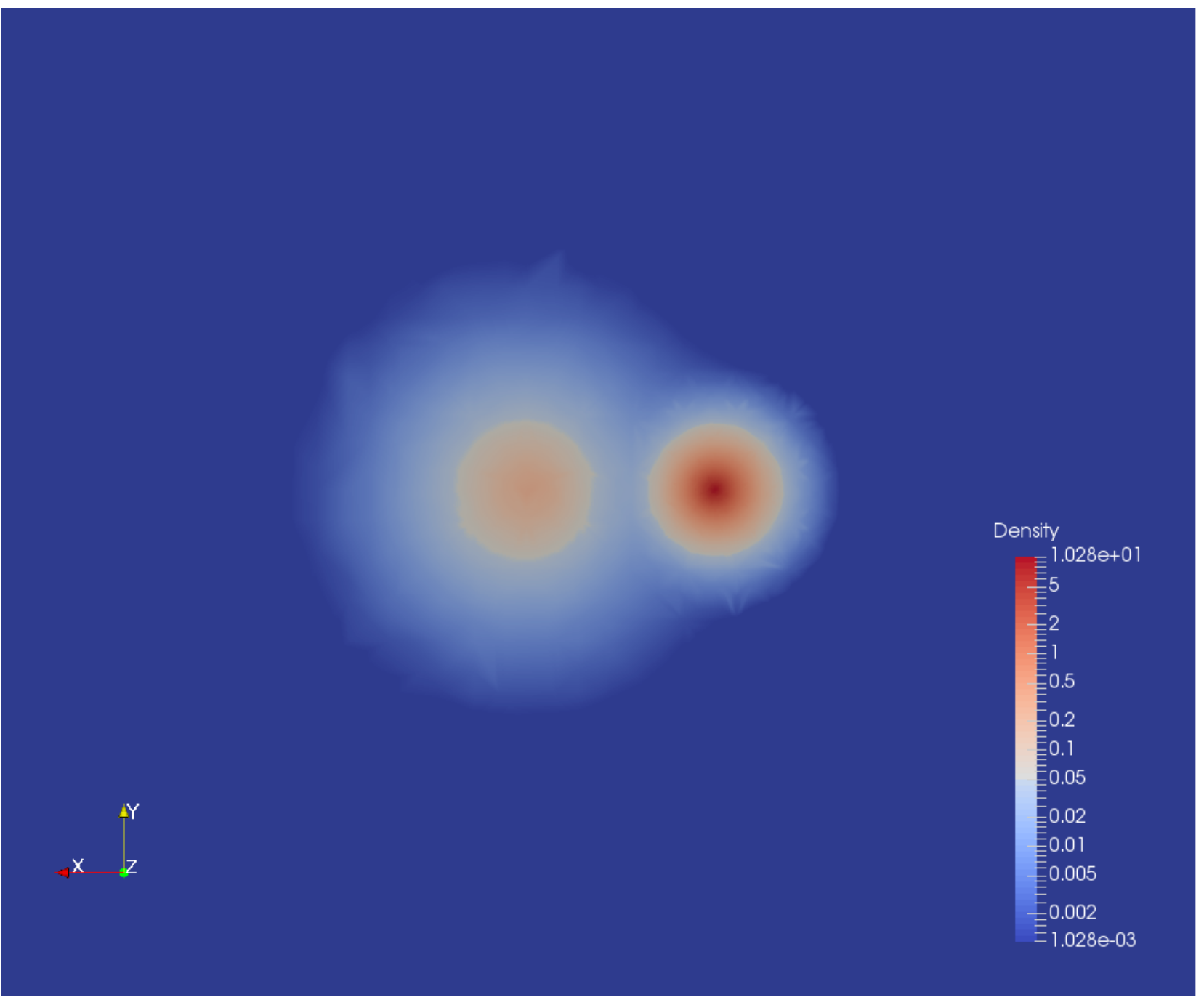}
      \includegraphics[width = 0.49\textwidth]{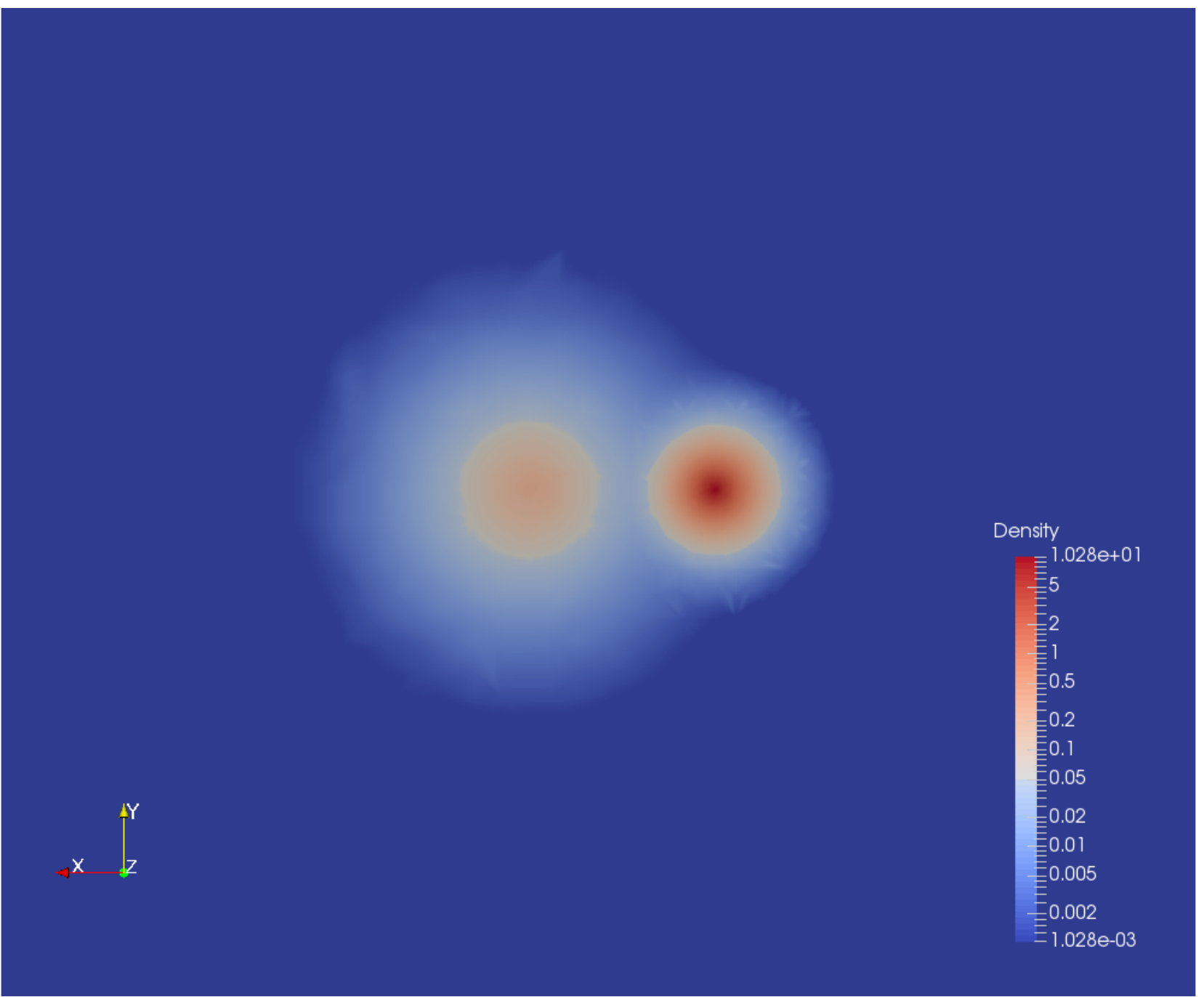}
      \includegraphics[width = 0.49\textwidth]{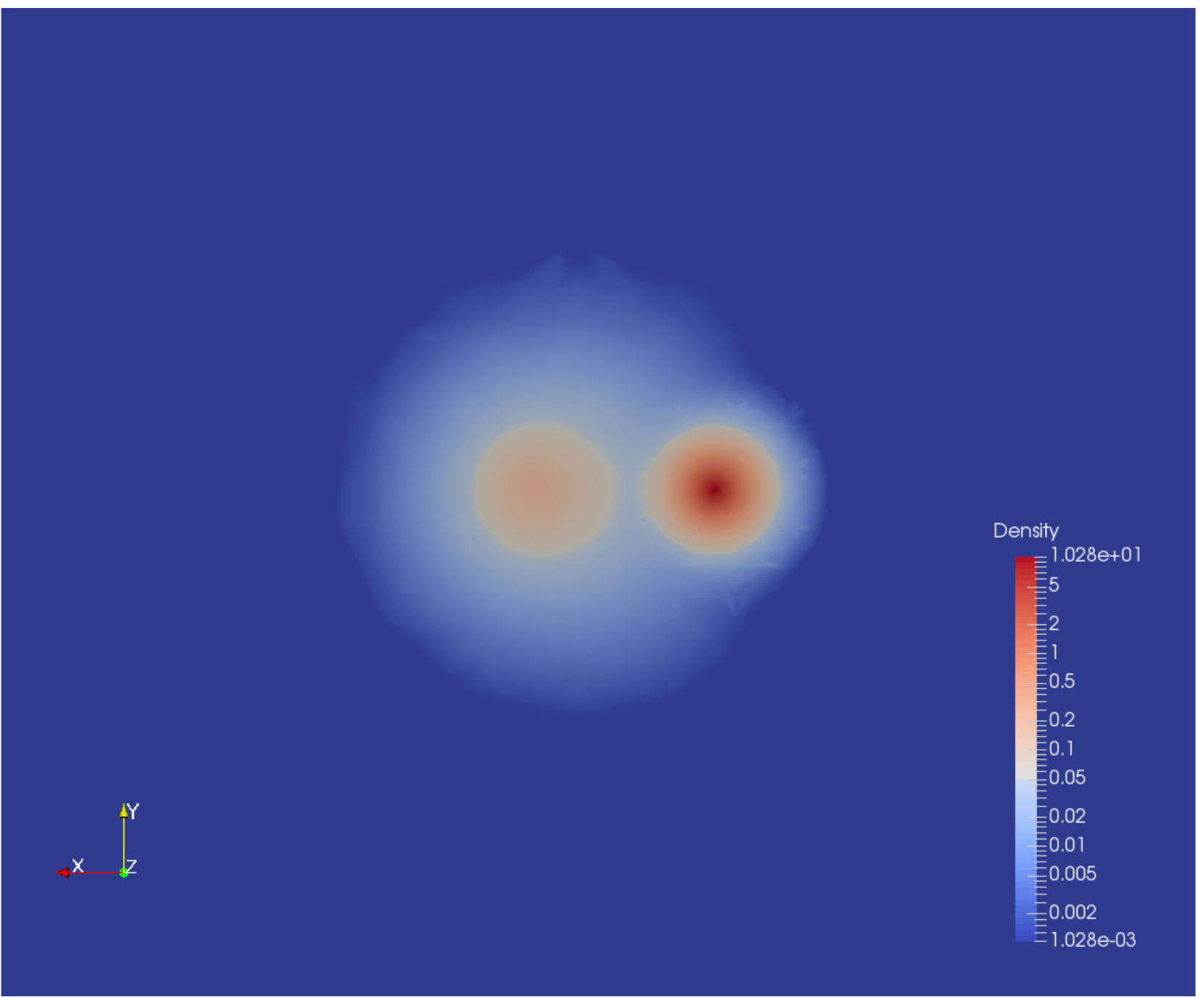}
      \includegraphics[width = 0.49\textwidth]{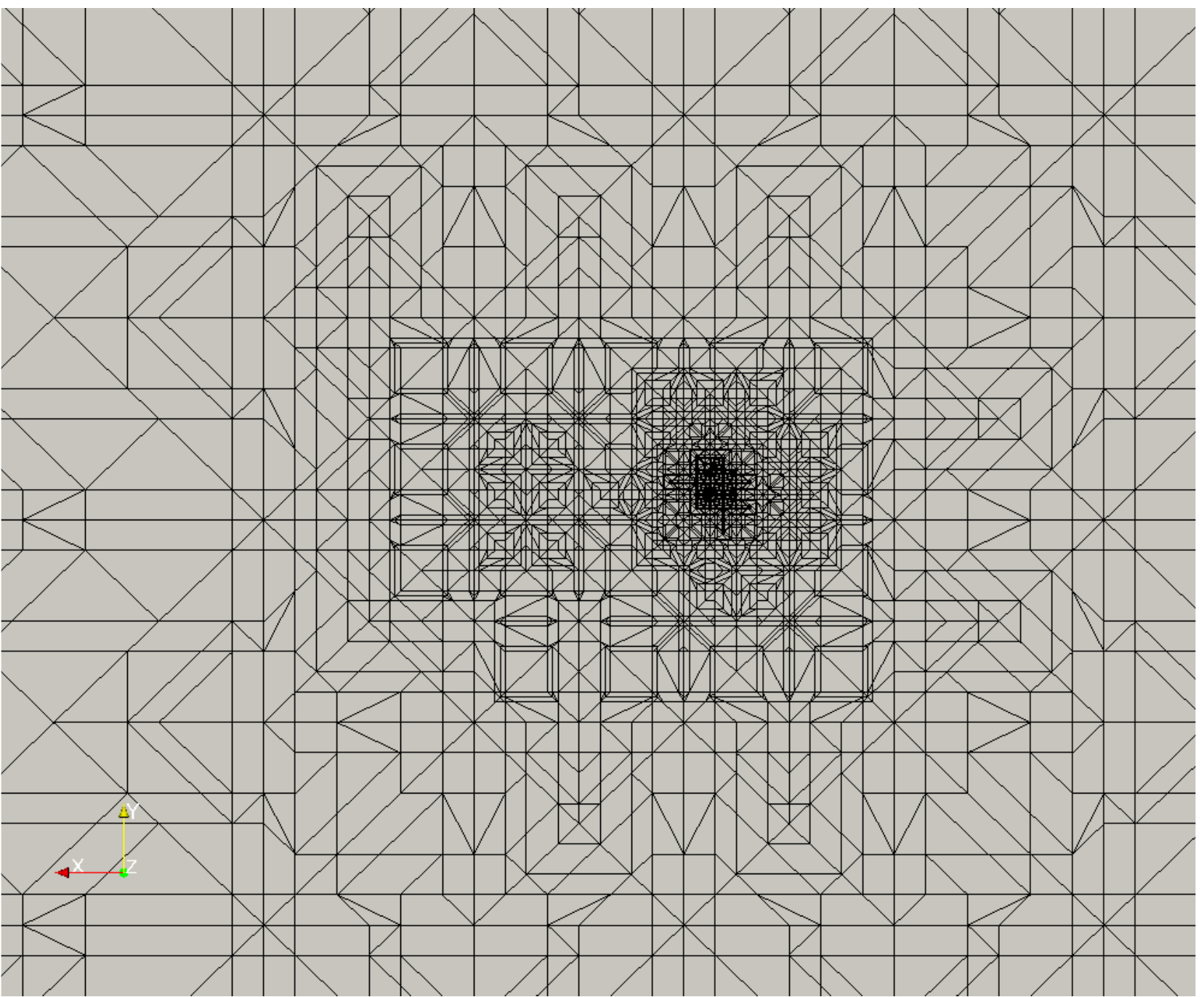}
      \caption{LiH: Output density(Row 1 Column 1: 0th iteration; Row 1 Column 2: 40th iteration; Row 2 Column 1: 4420th iteration;) and input grid(Row 2 Column 2).}\label{Fig: LiHPlus}
    \end{figure}

    \begin{figure}
    \centering
      \includegraphics[width = 0.49\textwidth]{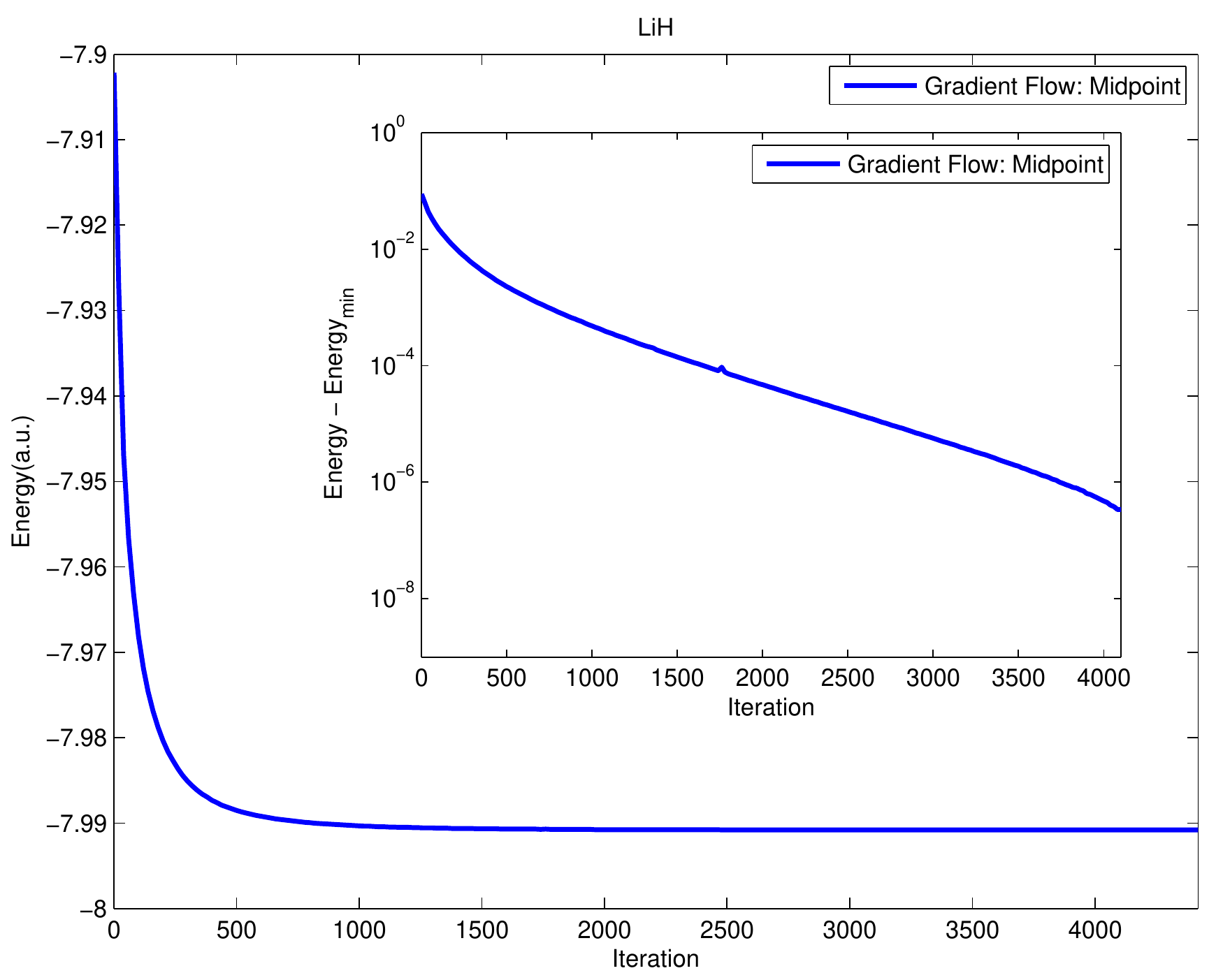}
      \includegraphics[width = 0.49\textwidth]{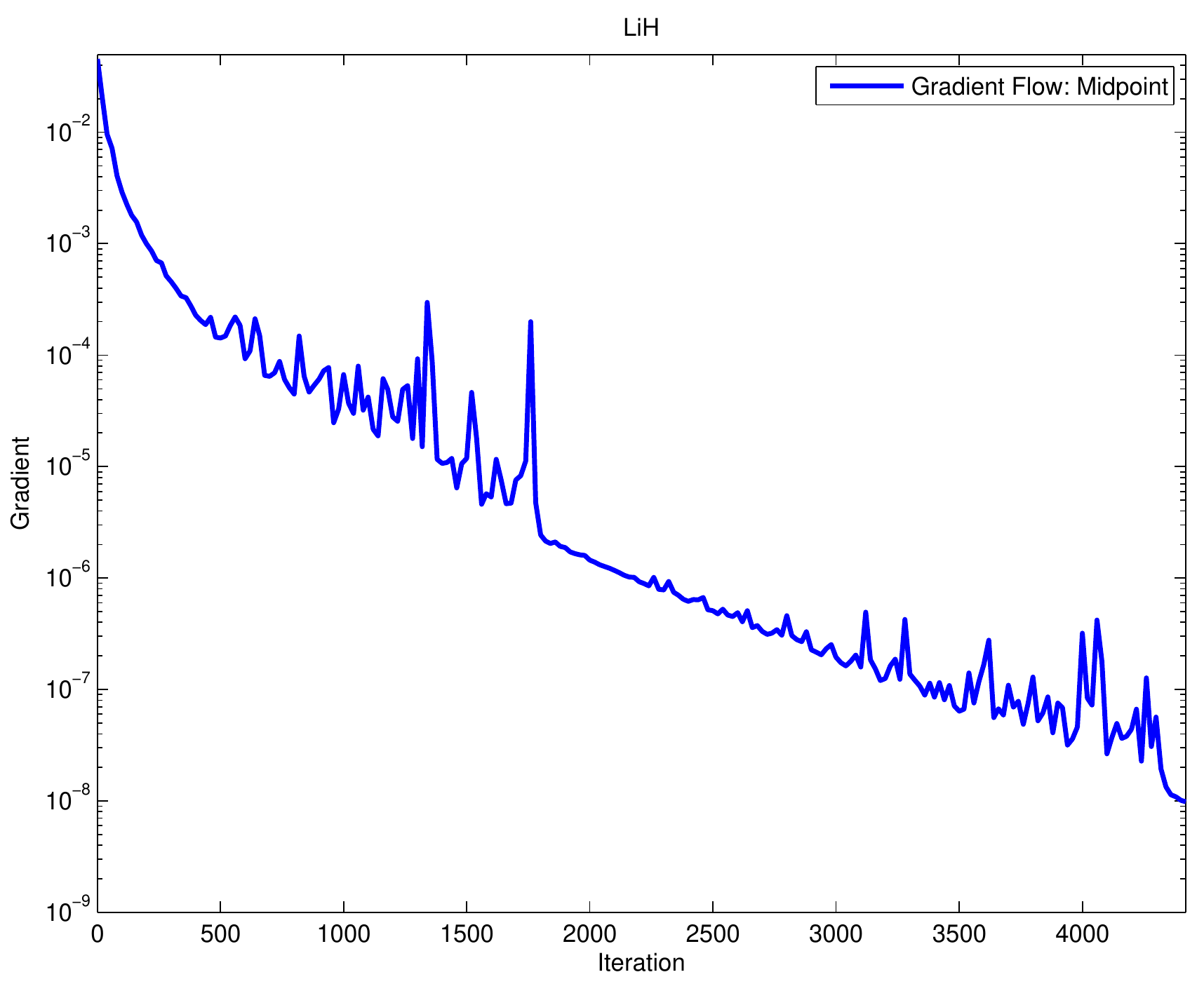}
      \caption{Convergence curves for energy(left) and gradient(right) for LiH.}\label{Fig: LiH}
    \end{figure}

    \emph{Example} 2. For methane(CH\textsubscript{4}) whose orbits number $N = 5$, we compute the gradient flow model on a fixed tetrahedral finite element mesh on $[-56, 55]\times [-54, 53]\times[-54, 53]\subset\mathbb{R}^3$ from an adaptive refinement finite element method\cite{AFEM} with degrees of freedom $N_g = 17267$(see Figure \ref{Fig: CH4Plus}). We see from Figure \ref{Fig: CH4Plus} that the approximations of electron density converge to a regular tetrahedron shape. We learn form Figure \ref{Fig: CH4} that both the approximations of energy and the gradient converge well.

    \begin{figure}
    \centering
      \includegraphics[width = 0.49\textwidth]{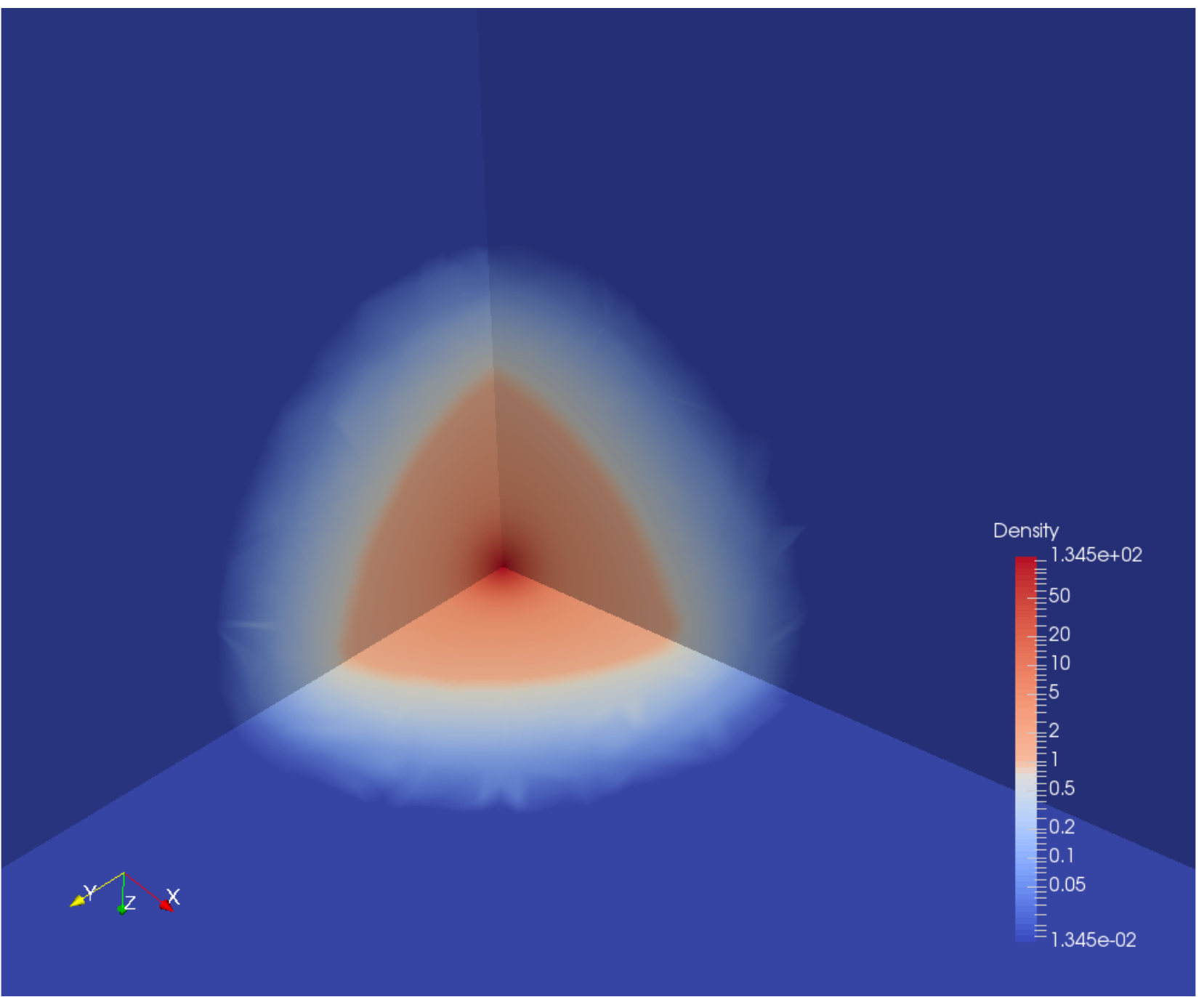}
      \includegraphics[width = 0.49\textwidth]{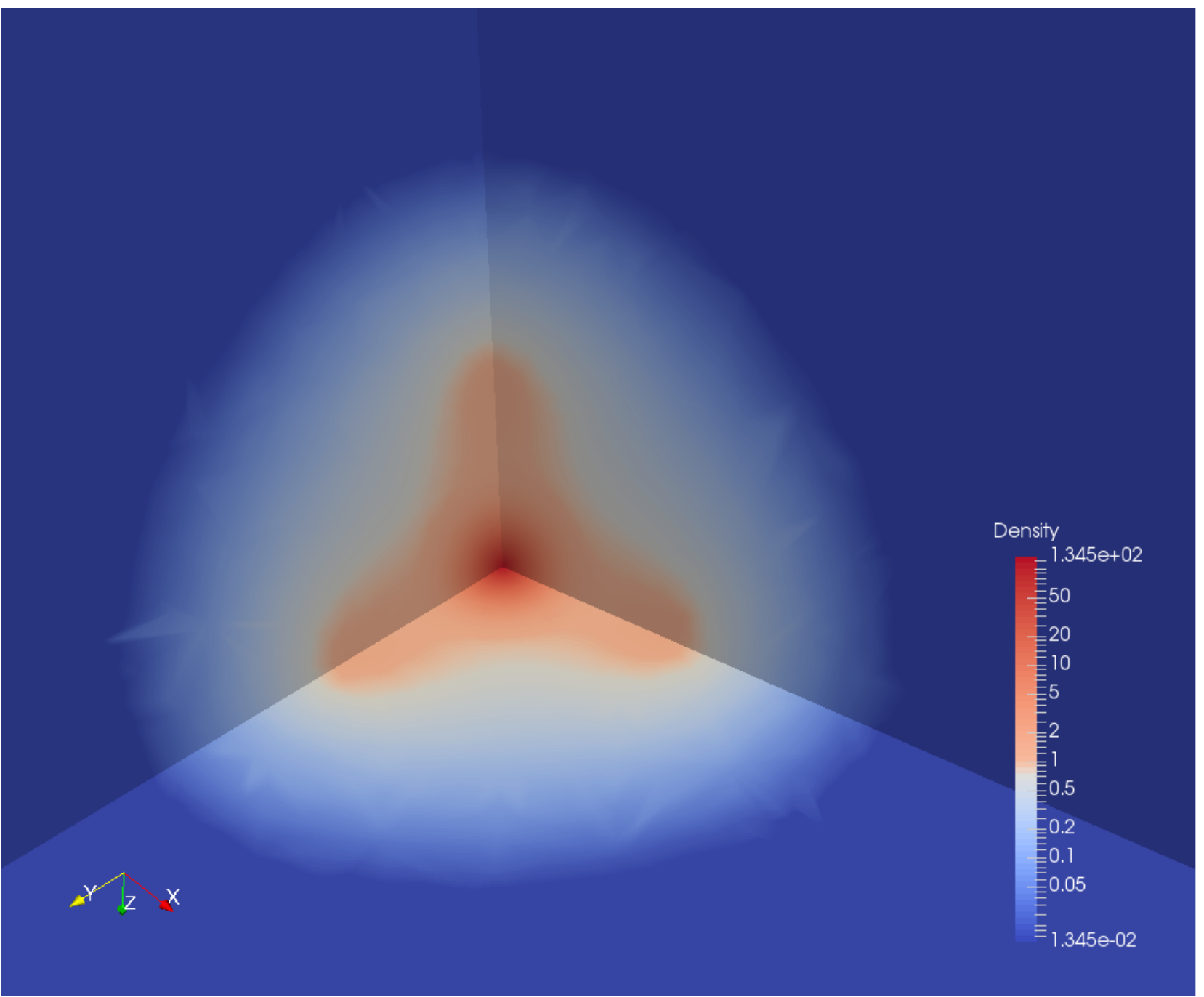}
      \includegraphics[width = 0.49\textwidth]{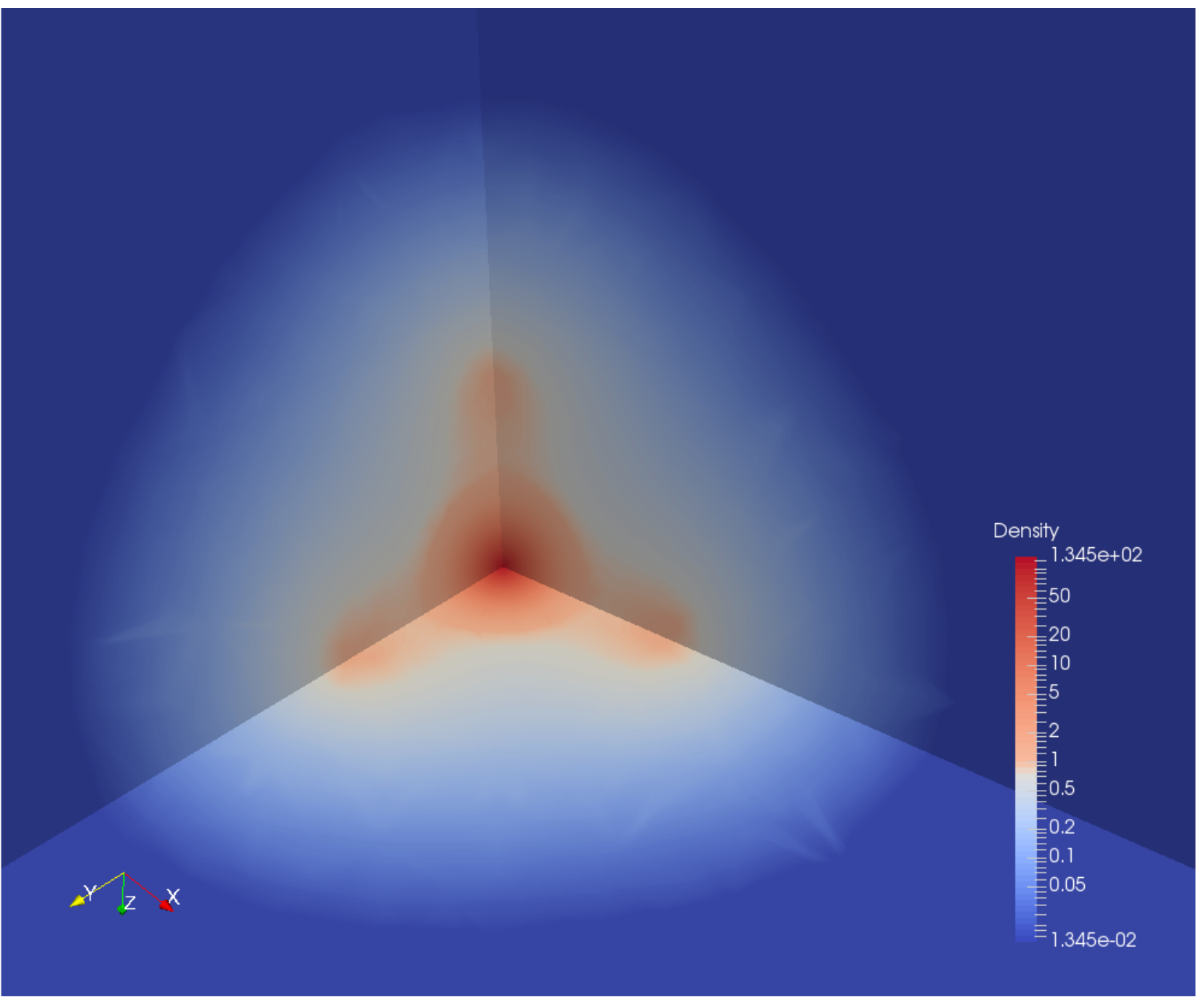}
      \includegraphics[width = 0.49\textwidth]{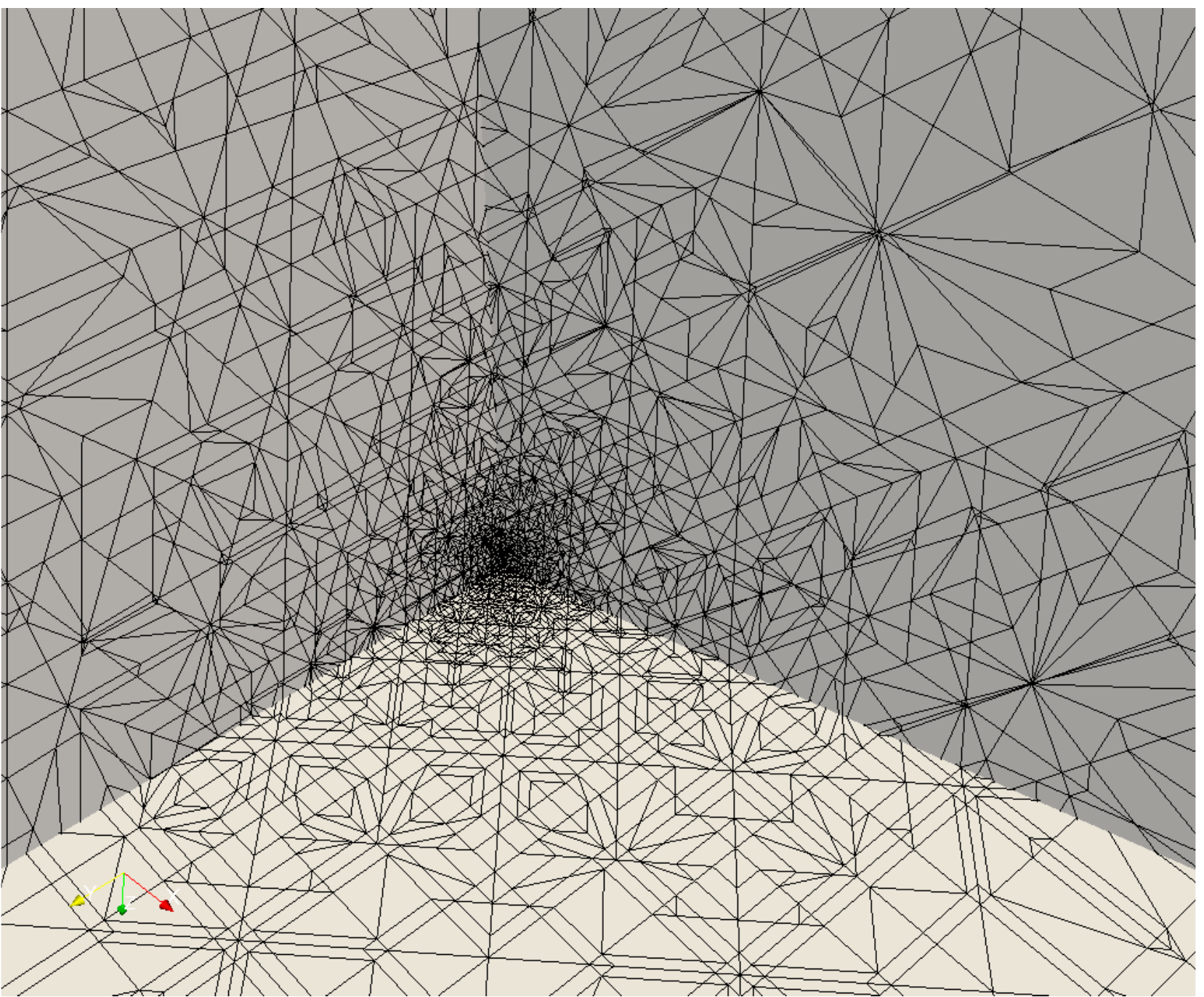}
      \caption{CH\textsubscript{4}: Output density(Row 1 Column 1: 0th iteration; Row 1 Column 2: 640th iteration; Row 2 Column 1: 10580th iteration;) and input grid(Row 2 Column 2).}\label{Fig: CH4Plus}
    \end{figure}

    \begin{figure}
    \centering
      \includegraphics[width = 0.49\textwidth]{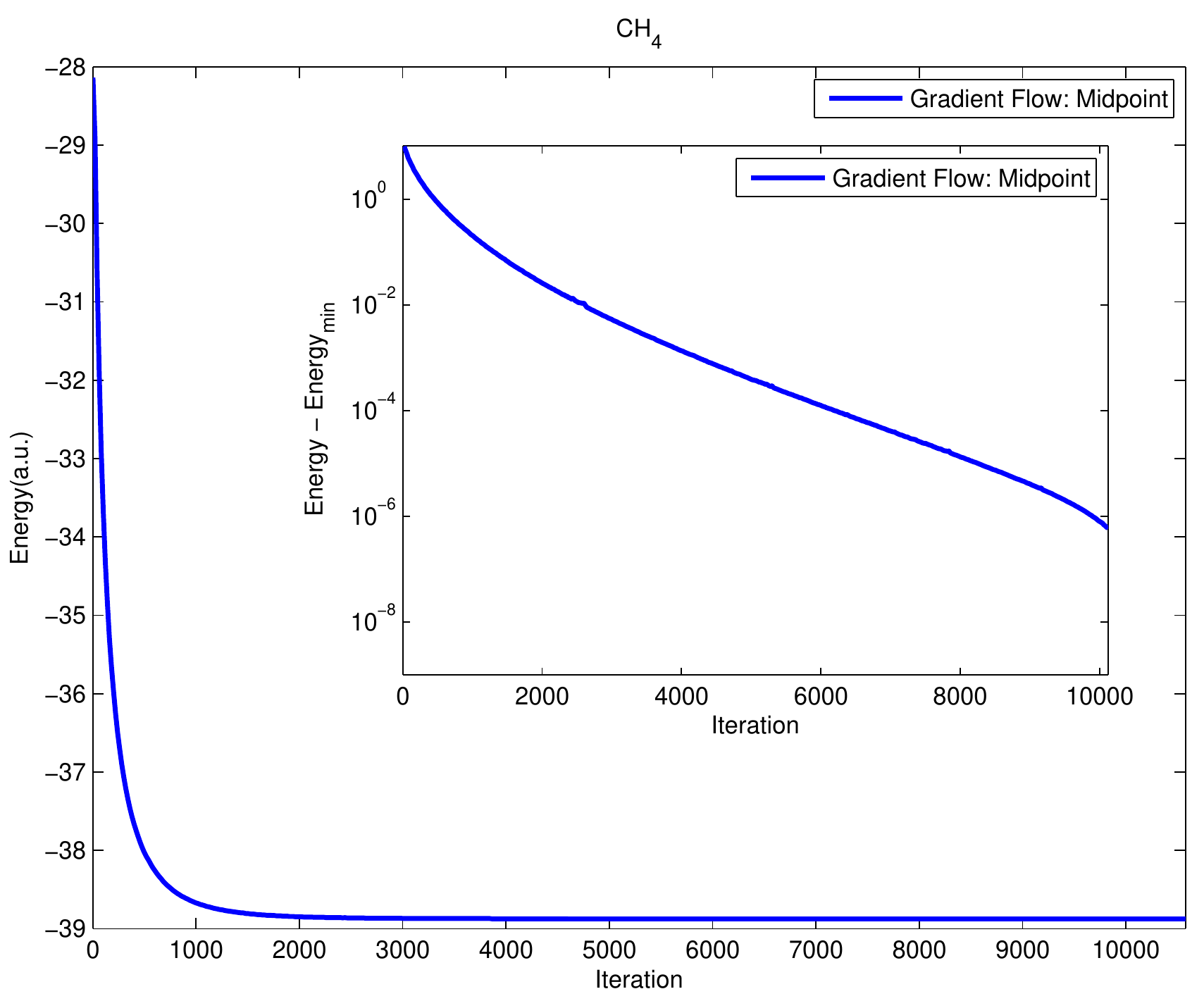}
      \includegraphics[width = 0.49\textwidth]{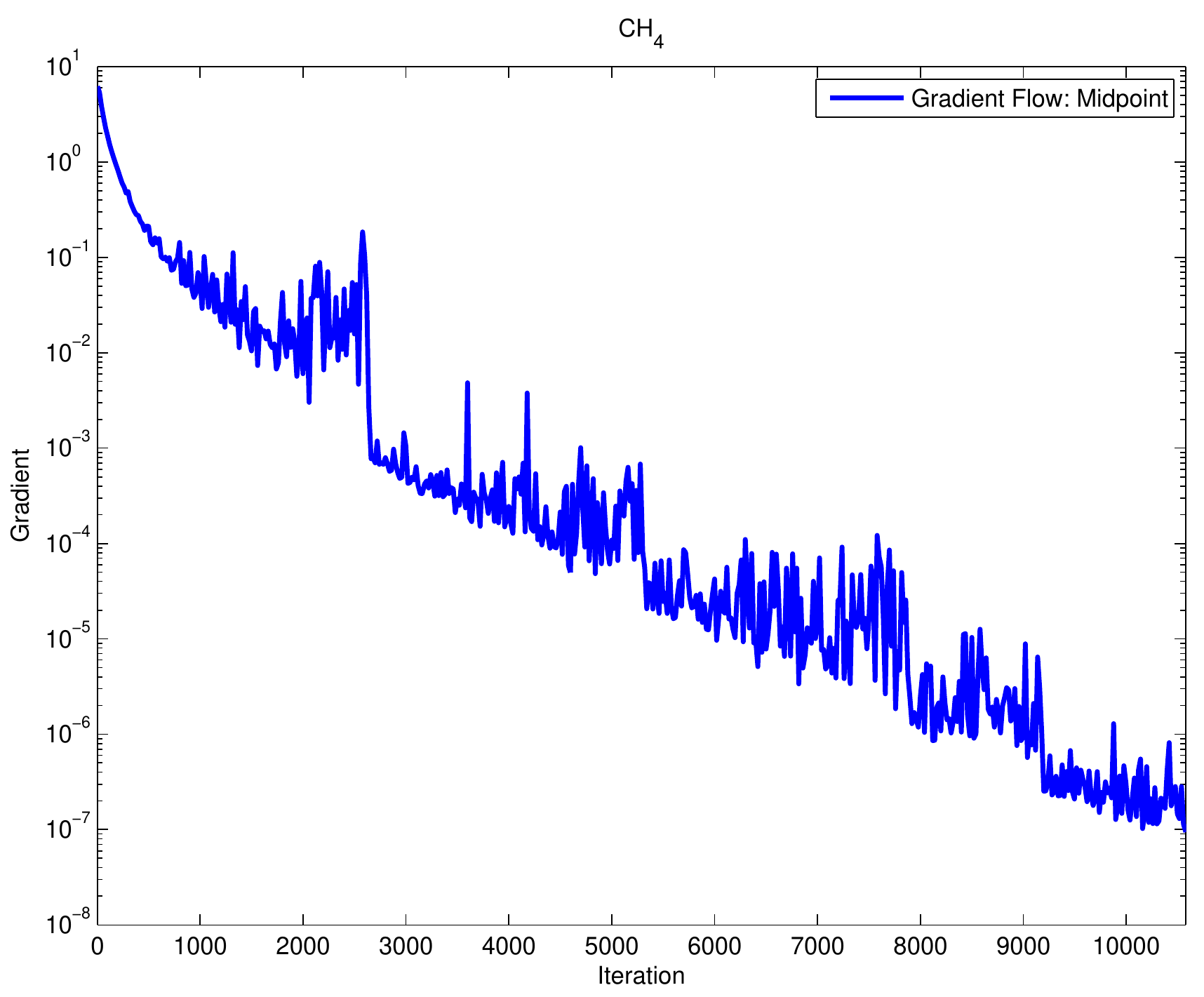}
      \caption{Convergence curves for energy(left) and gradient(right) for CH\textsubscript{4}.}\label{Fig: CH4}
    \end{figure}

    \emph{Example} 3. We choose a fixed tetrahedral finite element mesh on $[-56, 55]\times [-54, 53]\times[-54, 53]\subset\mathbb{R}^3$ from an adaptive refinement finite element method\cite{AFEM} with degrees of freedom $N_g = 16531$ and apply the gradient flow based model to compute the ground state of ethyne(C\textsubscript{2}H\textsubscript{2}) with orbits number $N = 7$(see Figure \ref{Fig: C2H2Plus}). We observe from Figure \ref{Fig: C2H2Plus} that the approximations of electron density converge. And similar to the examples above, the convergence curve of the approximated energy and the approximated gradient behaves as expected in Figure \ref{Fig: C2H2}.

    \begin{figure}
    \centering
      \includegraphics[width = 0.49\textwidth]{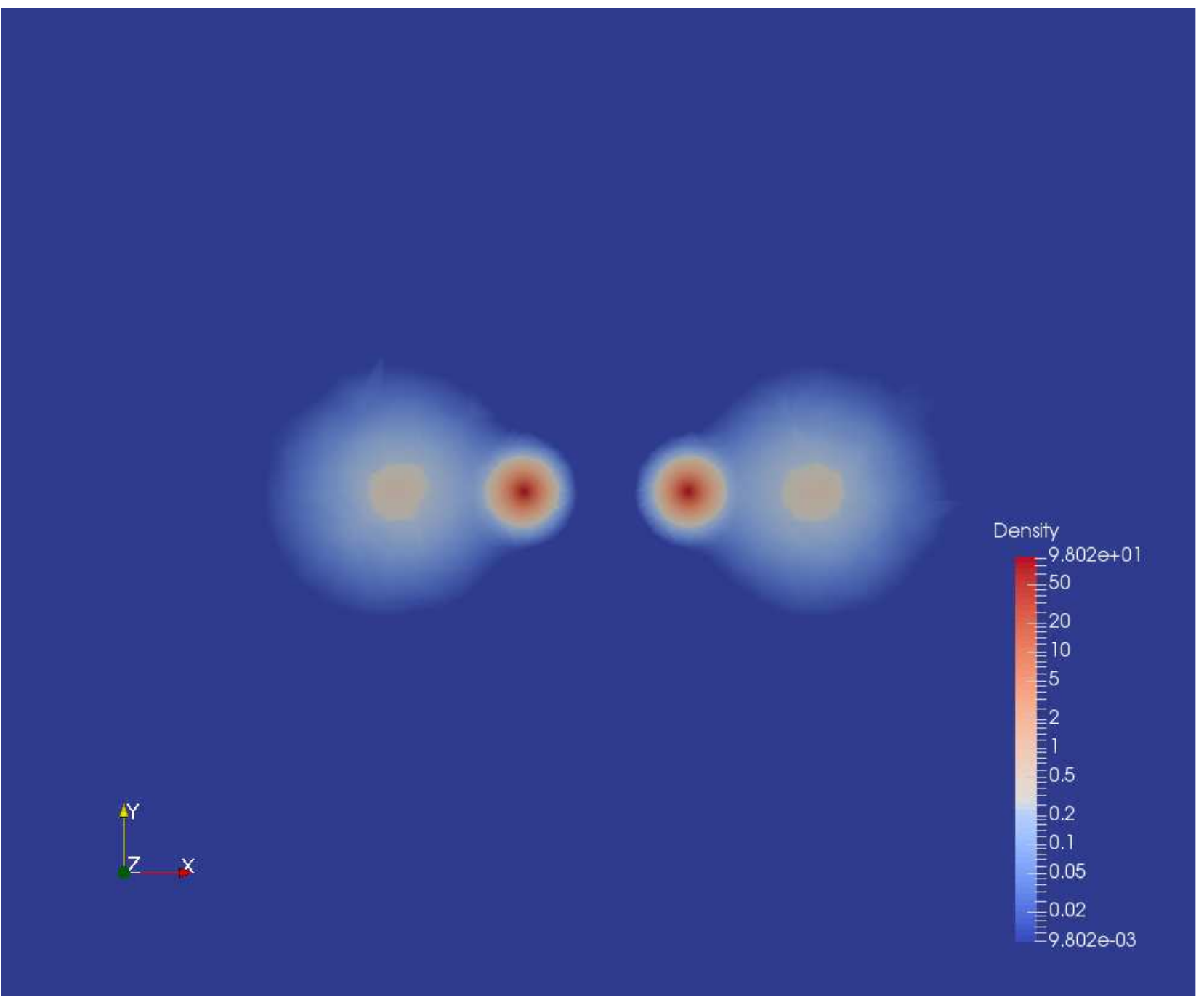}
      \includegraphics[width = 0.49\textwidth]{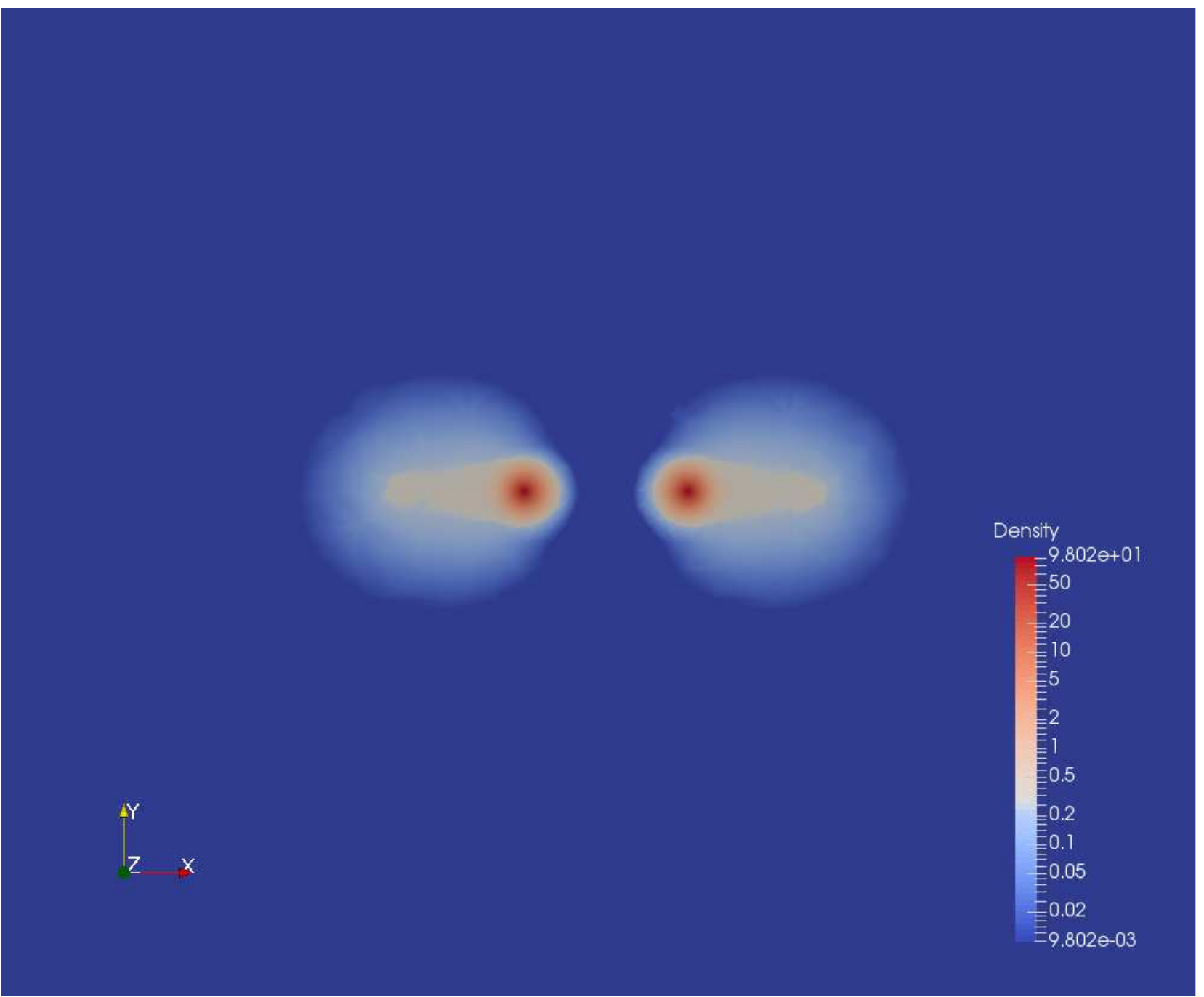}
      \includegraphics[width = 0.49\textwidth]{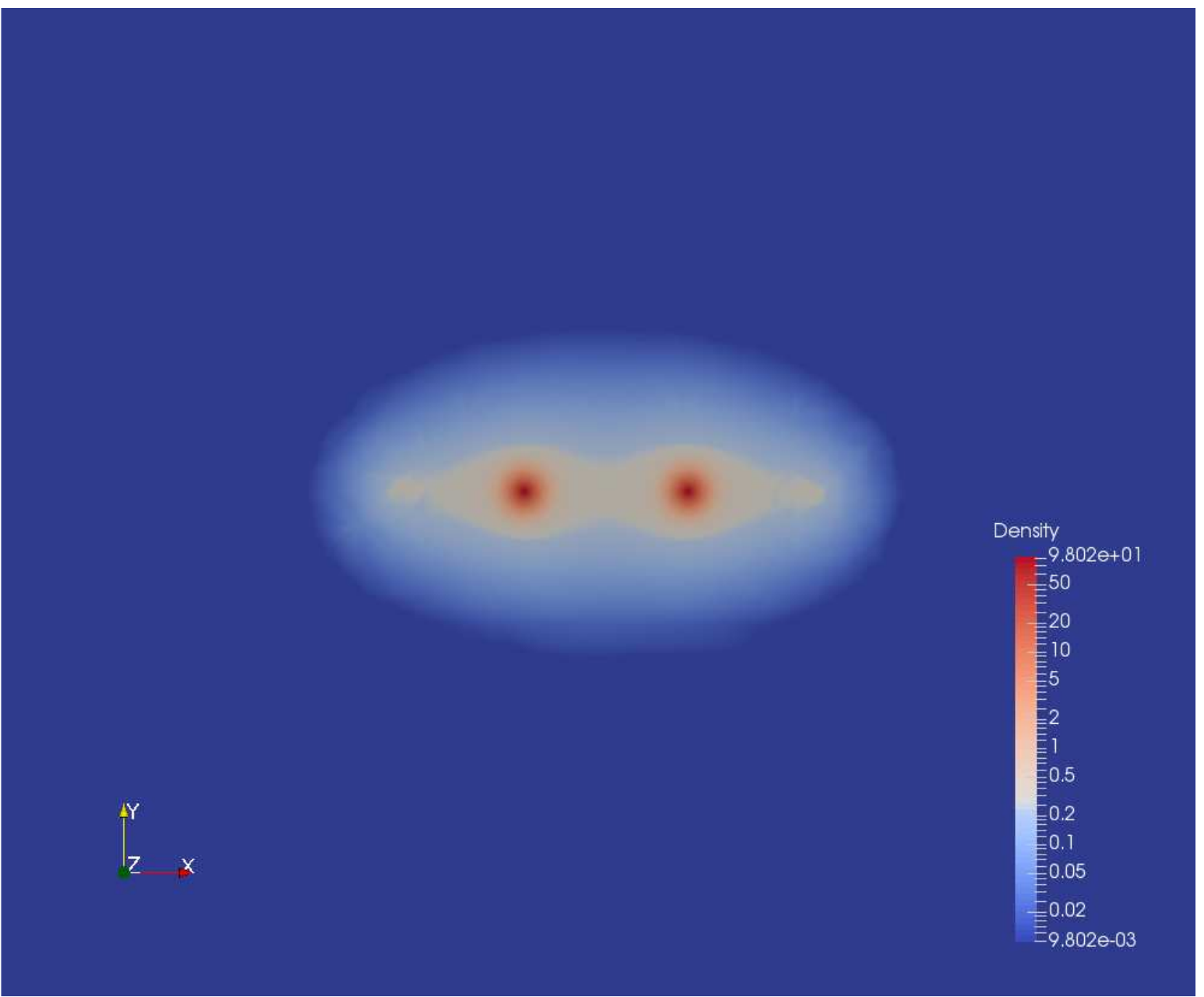}
      \includegraphics[width = 0.49\textwidth]{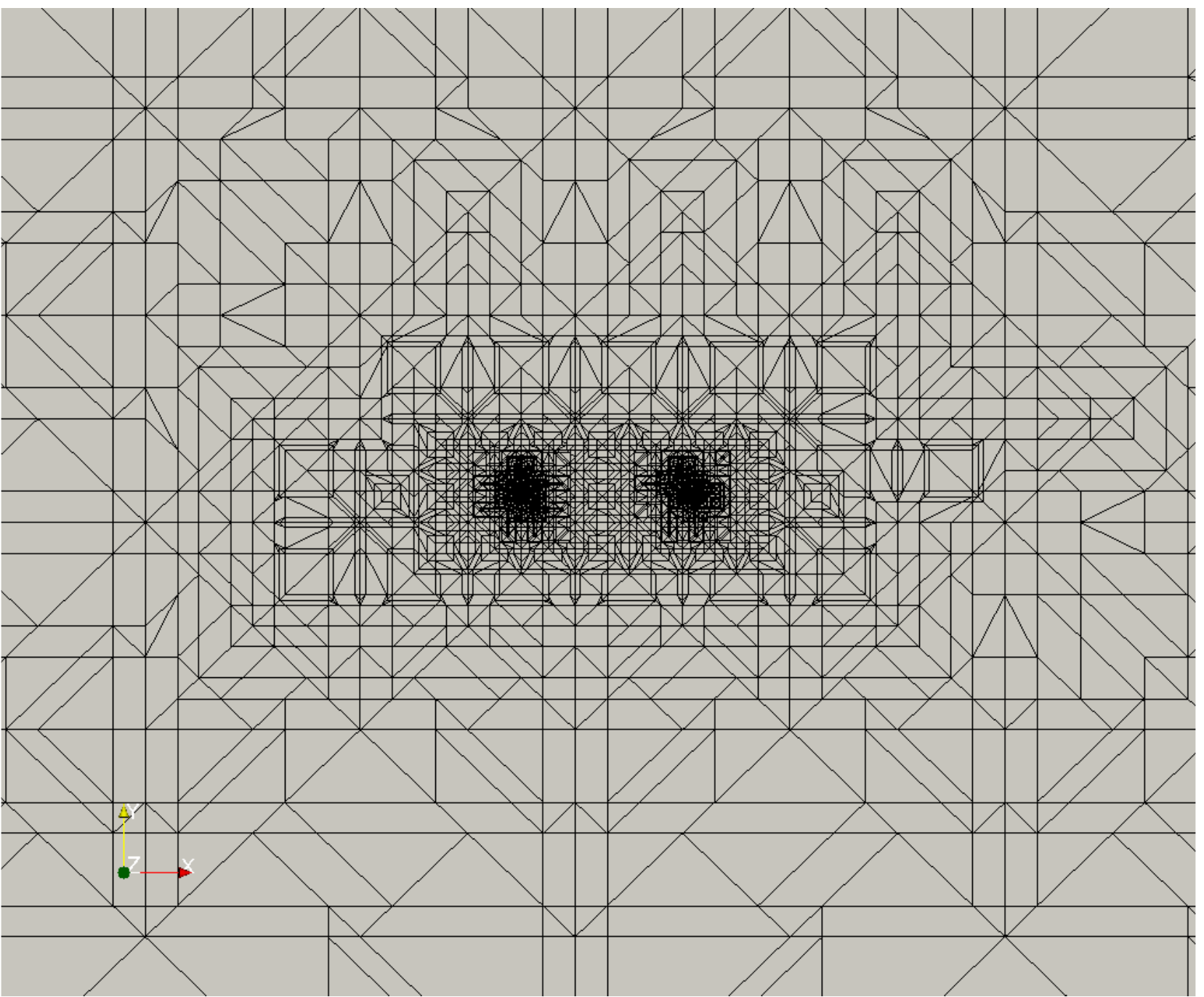}
      \caption{C\textsubscript{2}H\textsubscript{2}: Output density(Row 1 Column 1: 0th iteration; Row 1 Column 2: 580th iteration; Row 2 Column 1: 15160th iteration;) and input grid(Row 2 Column 2).}\label{Fig: C2H2Plus}
    \end{figure}

    \begin{figure}
    \centering
      \includegraphics[width = 0.49\textwidth]{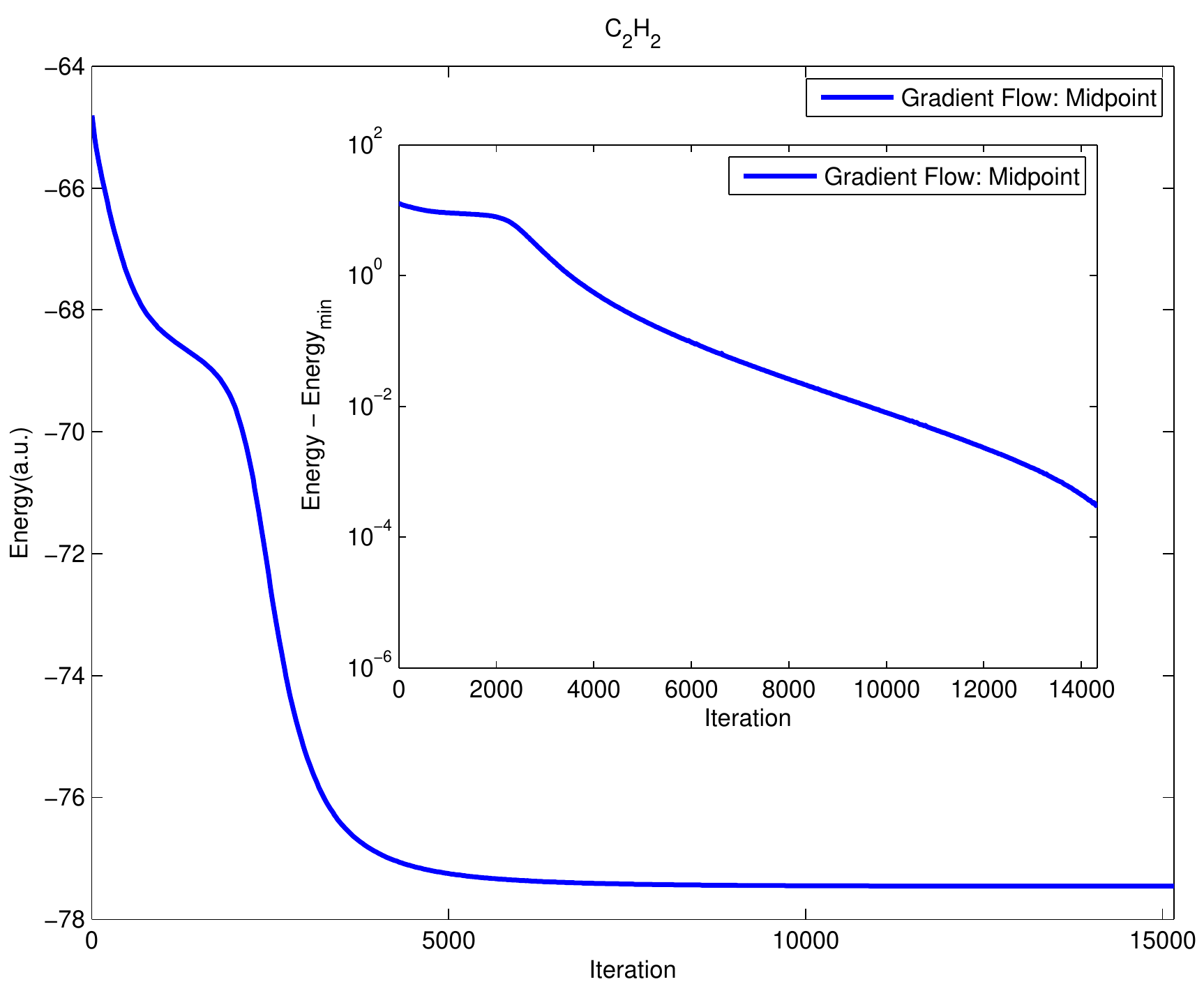}
      \includegraphics[width = 0.49\textwidth]{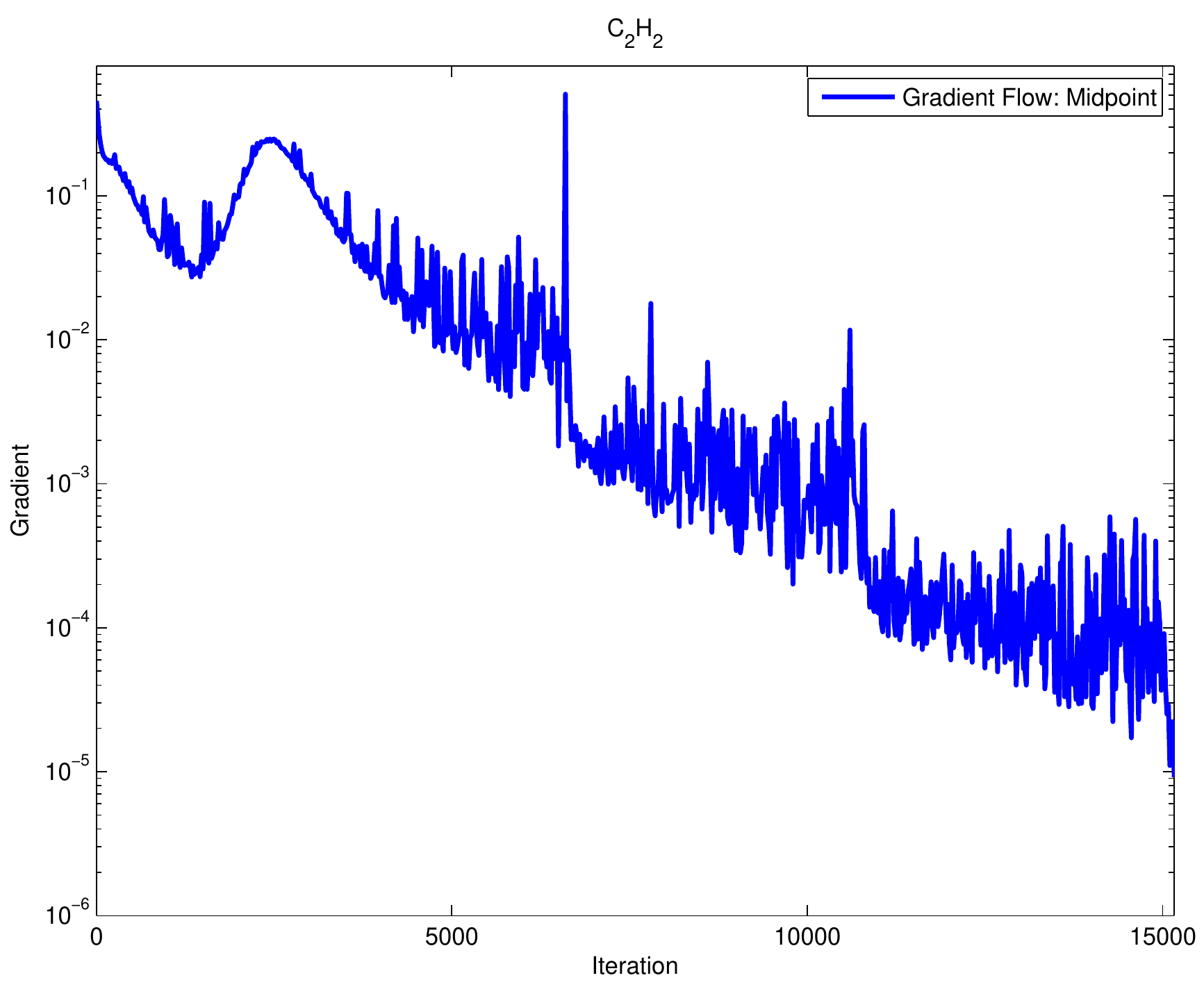}
      \caption{Convergence curves for energy(left) and gradient(right) for C\textsubscript{2}H\textsubscript{2}.}\label{Fig: C2H2}
    \end{figure}

    \emph{Example} 4. We apply the gradient flow based model to compute the ground state of benzene(C\textsubscript{6}H\textsubscript{6}) with orbits number $N = 21$ on a fixed tetrahedral finite element mesh on $[-56, 55]\times [-54, 53]\times[-54, 53]\subset\mathbb{R}^3$ generated by an adaptive refinement finite element method\cite{AFEM} with degrees of freedom $N_g = 20541$(see Figure \ref{Fig: C6H6Plus}). We see from Figure \ref{Fig: C6H6Plus} that the approximations of electron density are convergent. We understand from Figure \ref{Fig: C6H6} that the approximations of energy converges monotonically and the lower limit of the norm of the gradient approximations converge to zero.

    \begin{figure}
    \centering
      \includegraphics[width = 0.49\textwidth]{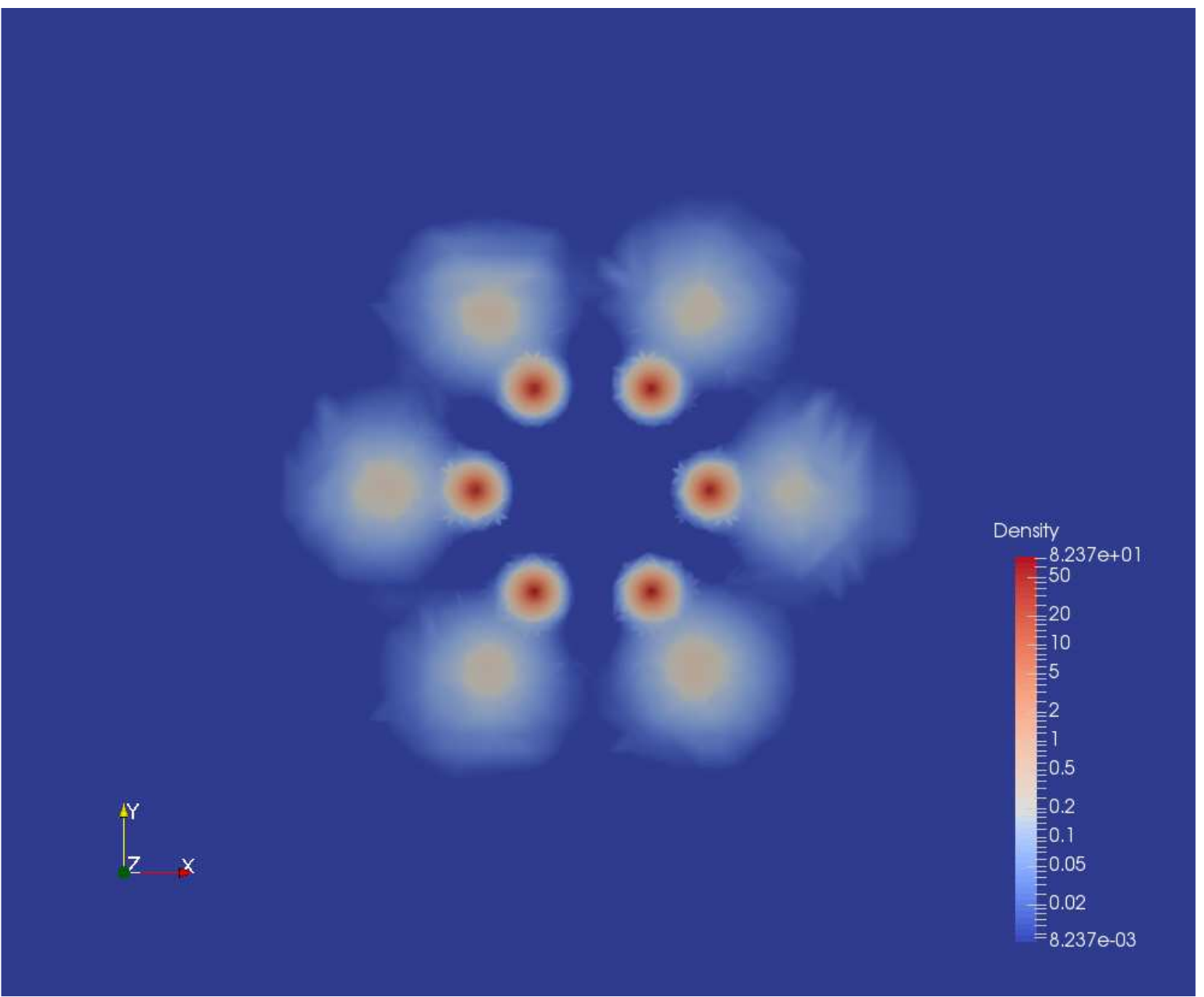}
      \includegraphics[width = 0.49\textwidth]{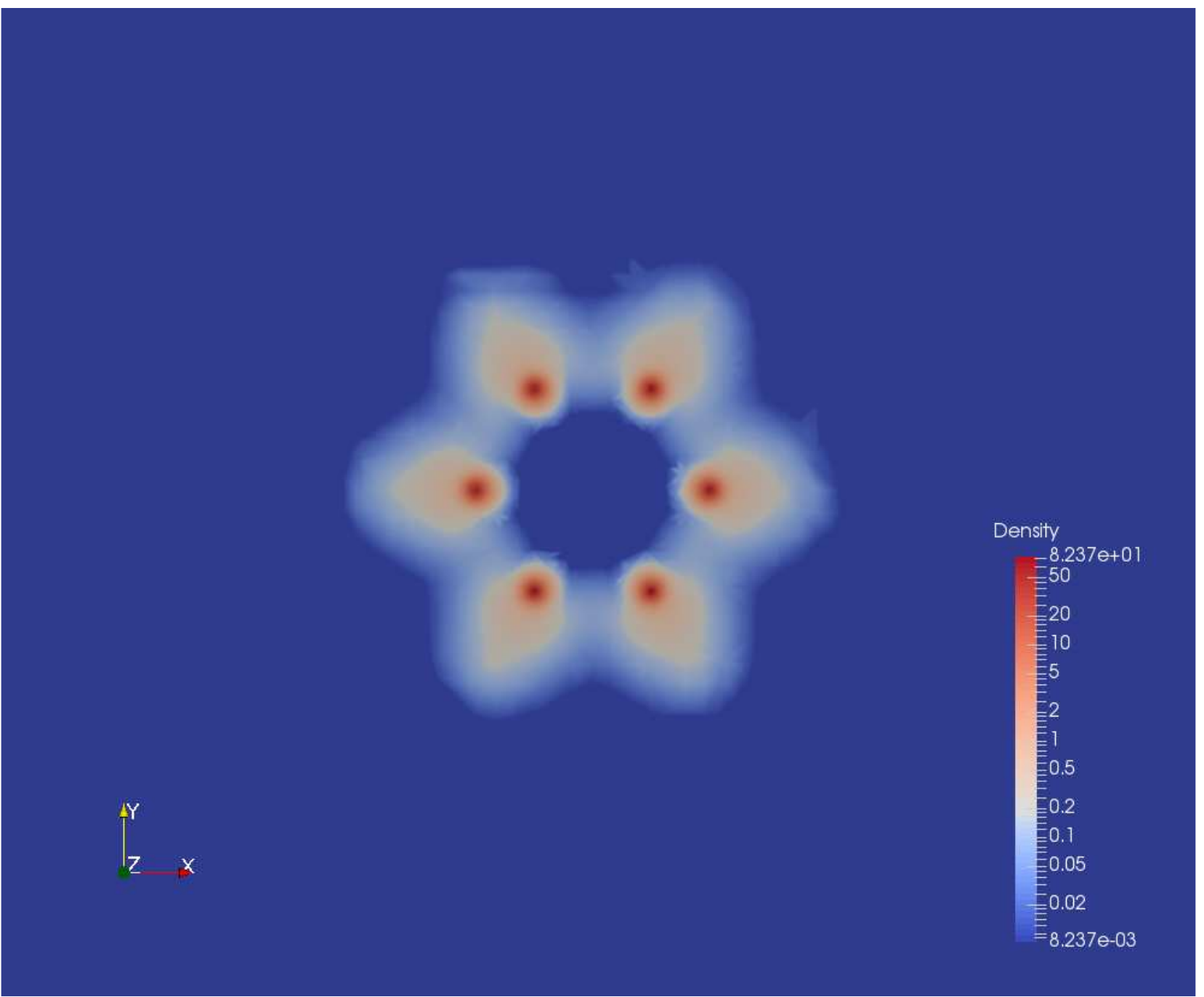}
      \includegraphics[width = 0.49\textwidth]{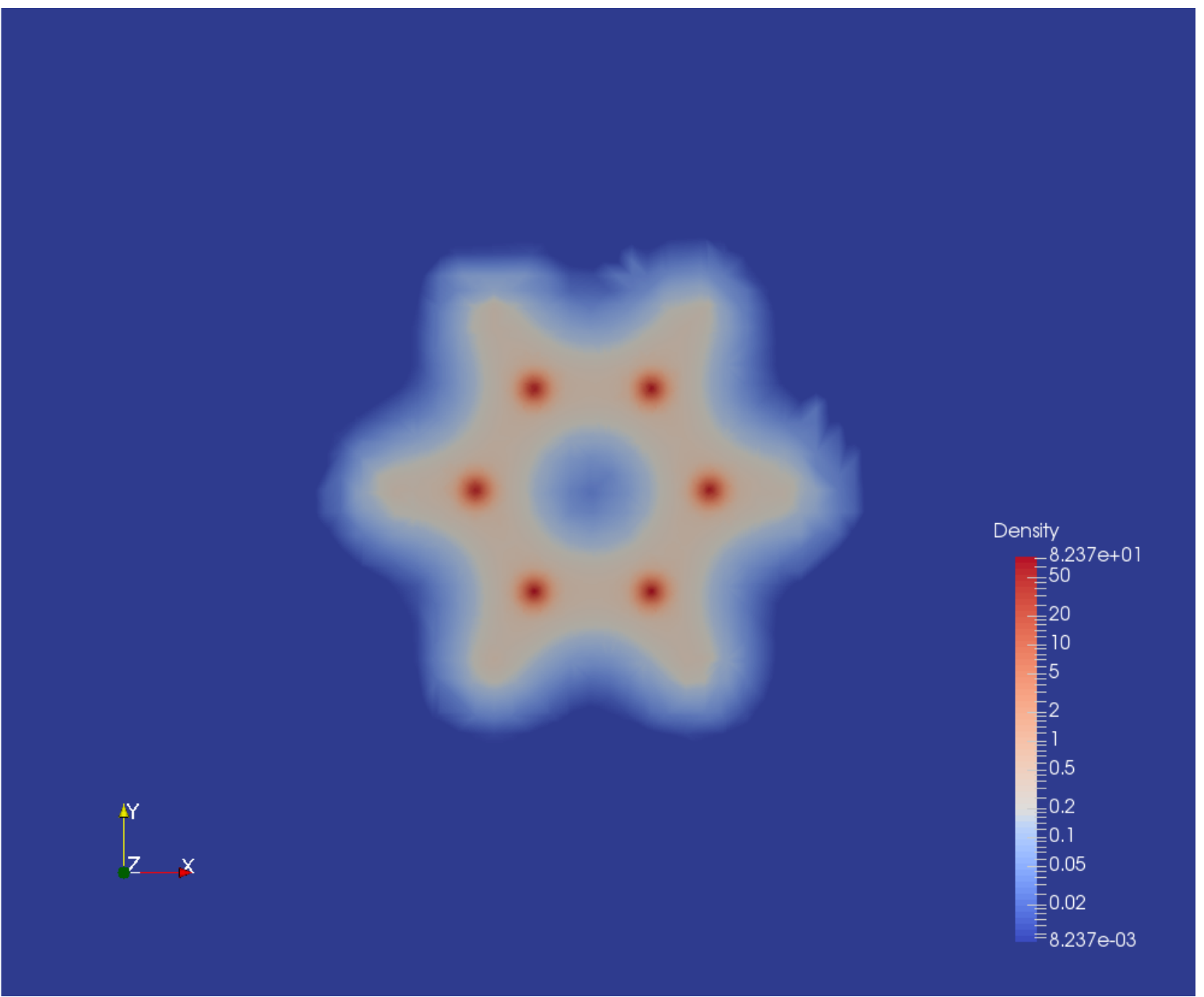}
      \includegraphics[width = 0.49\textwidth]{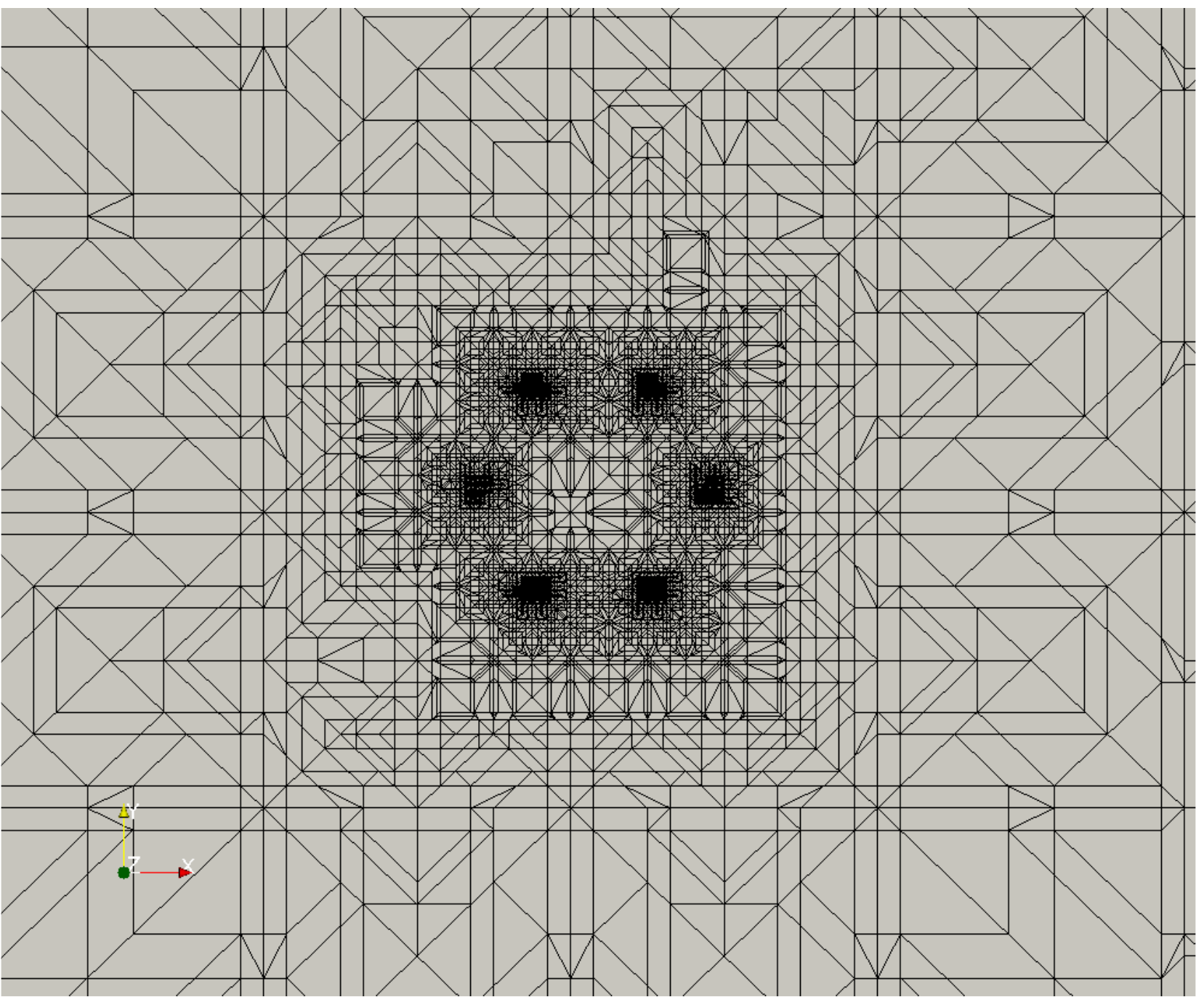}
      \caption{C\textsubscript{6}H\textsubscript{6}: Output density(Row 1 Column 1: 0th iteration; Row 1 Column 2: 300th iteration; Row 2 Column 1: 7460th iteration;) and input grid(Row 2 Column 2).}\label{Fig: C6H6Plus}
    \end{figure}

    \begin{figure}
    \centering
      \includegraphics[width = 0.49\textwidth]{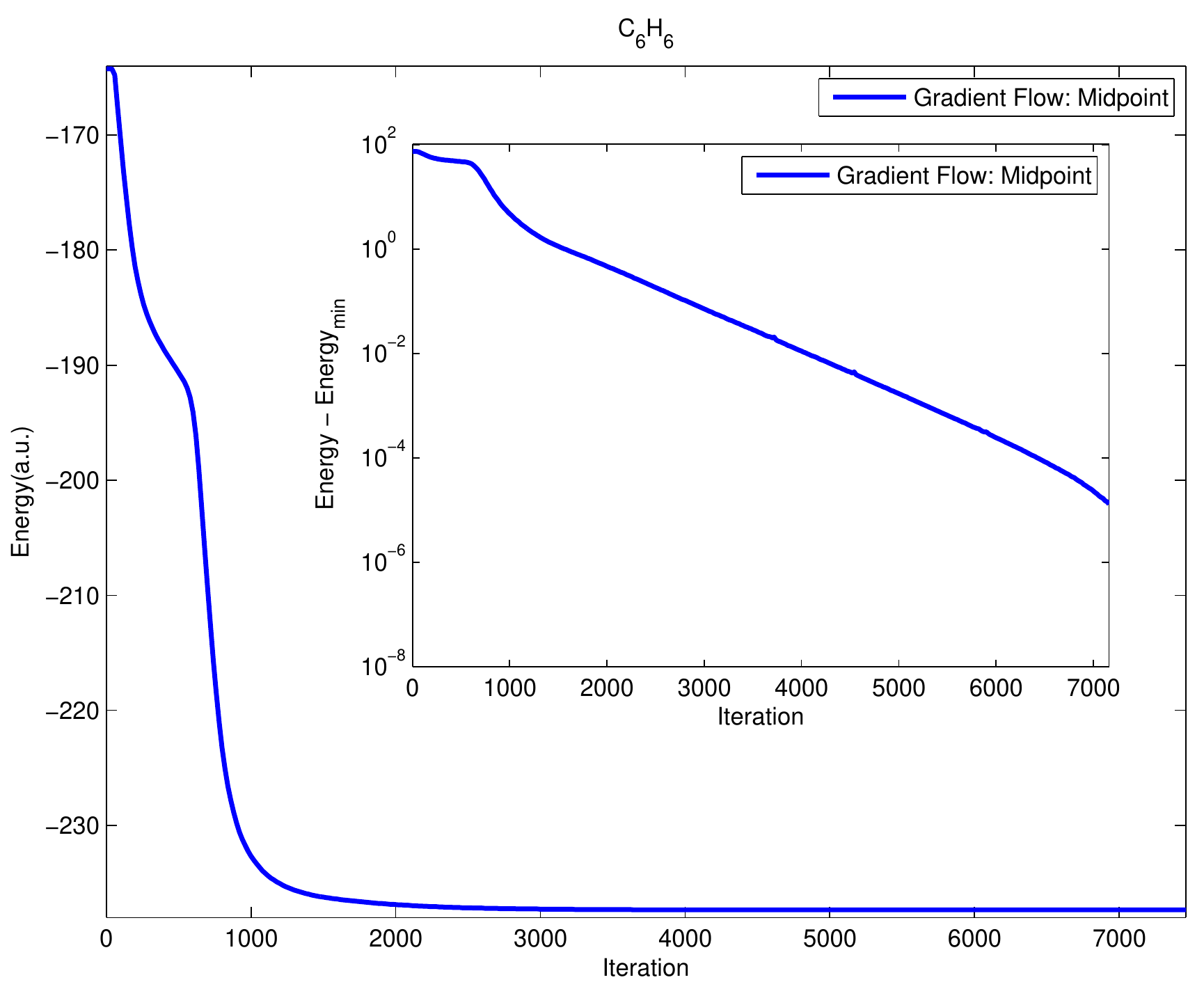}
      \includegraphics[width = 0.49\textwidth]{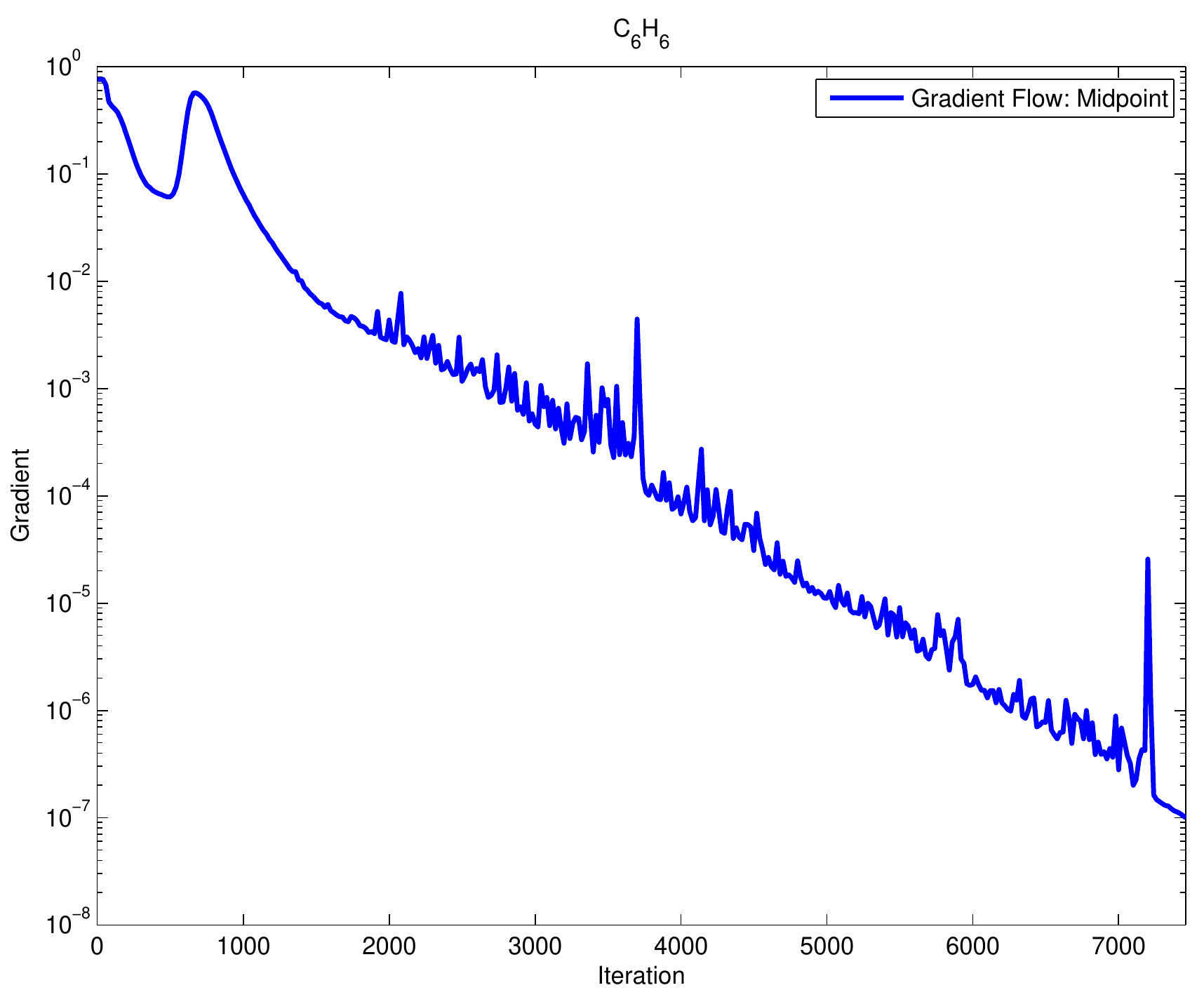}
      \caption{Convergence curves for energy(left) and gradient(right) for C\textsubscript{6}H\textsubscript{6}.}\label{Fig: C6H6}
    \end{figure}

     \emph{Examples} 1-4 indicate that our orthogonality preserving iterations of the gradient flow based model (Algorithm \ref{Alg: Self-consistent}) work well in ground state calculations.

    \section{Concluding remarks}

    In this paper, we have proposed and analyzed a gradient flow based model of Kohn-Sham DFT, which is an alternative way to solve Kohn-Sham DFT apart from the existing eigenvalue model with SCF iterations and the energy minimization model with optimization approaches. First we have established a continuous dynamical system based on the extended  gradient flow and proven that the solution remains on the Stiefel manifold, and then we have proven the local convergence of the dynamical system. Apart from that, local convergence rate can be further estimated if the Hessian is coercive locally. Second, we have come up with a midpoint scheme to discretize the dynamical system in the temporal direction and proven that it preserves orthogonality. We should mention that the auxiliary updating points of the midpoint scheme distribute inside the Stiefel manifold while those of retraction optimization methods distribute outside the Stiefel manifold. Compared with manifold path optimization methods diminishing energy locally \cite{Str_Pre2}, our midpoint scheme is a global approximation of the gradient on the step size interval. We also have proven the local convergence and estimated the convergence rate of the midpoint scheme under mild assumptions. In particular, based on the midpoint scheme, we have then proposed and analyzed an orthogonality preserving iteration scheme for the Kohn-Sham model and proven that the scheme is convergent under mild assumptions and the corresponding convergence rate can be estimated. Without annoying orthogonality preserving strategy and backtracking in optimization model and divergence of small gap systems in SCF iterations of nonlinear eigenvalue model, the gradient flow based model of Kohn-Sham DFT is promising.  It is worthwhile to look into the relationship between our orthogonality preserving scheme from the gradient flow based model and the conventional self-consistent field iteration from the nonlinear eigenvalue model. Moreover, our gradient flow based model can be extended to other models in electronic structure calculations such as Hartree-Fock type models. In this paper, we have mainly discussed the midpoint scheme to discretize the gradient flow based model. We may study other orthogonality preserving discretizations in temporal, such as the leapfrog scheme. Finally, we should mention that it is very useful if the convergence of the approximations of the gradient flow based model can be speed up, which is indeed our on-going work.

    \bibliographystyle{siam}
    \bibliography{reference}
\end{document}